\documentclass[12pt, oneside, a4paper]{book}

\usepackage[latin1]{inputenc}
\usepackage{fancyhdr,amssymb,amsmath,multicol,tabularx,makeidx,color,}
\usepackage{amsfonts,amsthm,graphicx, mathrsfs}
\usepackage{algorithmic}
\usepackage{algorithm2e}
\usepackage{tikz}
\usepackage{hyperref}

\usepackage{listings}

\usepackage[italian]{babel}
\usepackage{picinpar}
\usepackage{textcomp, setspace}
\DeclareGraphicsExtensions{.pdf,.png,.jpg,.mps}
\usepackage[center, bf, small]{caption}

\theoremstyle{plain} 
\newtheorem{thm}{Teorema}[section]
\newtheorem{cor}[thm]{Corollario} 
 
\newtheorem{prop}[thm]{Proposizione}
\newtheorem{oss}[thm]{Osservazione}
\newtheorem{ossi}[thm]{Osservazioni}
\newtheorem{defn}[thm]{Definizione}
\newtheorem{es}[thm]{Esempi}
\newtheorem{eso}[thm]{Esempio}
\newtheorem{congettura}[thm]{Congettura}
\theoremstyle{remark}

\pagestyle{fancy}
\addtolength{\headwidth}{\marginparsep}
\addtolength{\headwidth}{\marginparwidth}

\fancyhf{}
\fancyhead[LE,RO]{\bfseries\thepage}
\fancyhead[LO]{\bfseries\rightmark}
\fancyhead[RE]{\bfseries\leftmark}
\fancypagestyle{plain}

\setcounter{tocdepth}{1}

\begin{document}

\thispagestyle{empty}
\begin{center}

{\scshape\large \textbf{UNIVERSIT\`{A} DELLA CALABRIA}}

\vspace{0.45 cm}
\includegraphics[scale=.4, keepaspectratio]{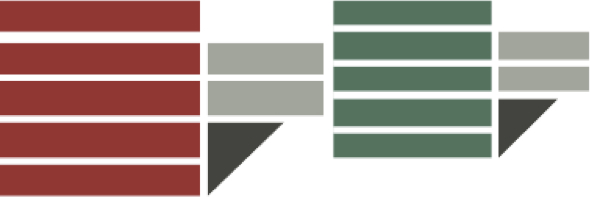}\\
{\scshape\large \textbf{Facolt\`{a} di Scienze Matematiche, Fisiche e Naturali}}

\vspace{0.45cm}

{Corso di Laurea in Matematica}

\vspace{0.45 cm}

{\scshape\large Tesi di Laurea Magistrale}
\vspace{4.0cm}

{\huge \textbf{$LS$-successioni di punti nel quadrato} }
\end{center}
\vspace{4.0cm}

\par
\noindent
\begin{minipage}[t]{0.47\textwidth}
{\large{Relatore:\\
Chiar.mo Prof.\\
Aljo\v{s}a Vol\v{c}i\v{c}}}\\

\vspace{0.5 cm}
\noindent

{\large{Correlatore:\\
Chiar.ma Dott.ssa\\
Ingrid Carbone}}
\end{minipage}
\hfill
\begin{minipage}[t]{0.47\textwidth}\raggedleft
{\large{Candidato:\\
Maria Rita Iac\`{o} \\ Mtr. 138146}}
\end{minipage}

\vspace{1.5 cm}
\begin{center}
\noindent
{\scshape\large Anno Accademico 2010/2011}

\end{center}

\frontmatter
\tableofcontents
\listoffigures

\clearpage{\pagestyle{empty}\cleardoublepage}

\chapter{Abstract}
The main purpose of this master thesis is to study the $LS$-sequences of points introduced by Carbone in \cite{Carbone} and find two generalizations of them to the unit square. Here we also present a new algorithm proposed by the same author in \cite{Carbone2} and we implement it in order to have a graphical description of these sequences.
\\Chapter 1 includes a collection of results concerning the uniform ditribution theory and the discrepancy (we refer to \cite{Drmota_Tichy} and \cite{Kuipers_Niederreiter} for a complete survey on the matter).
\\In Chapter 2 we focuse our attention on the $LS$-sequences of partitions and of points in the unit interval, giving particular attention to the ordering of the points \lq\lq \`{a} la van der Corput \rq\rq and finding a way to compute them related to the digit expansion of natural numbers in base $L+S$.
\\ In Chapter 3 we move on the unit square where we find two generalizations of the $LS$-sequences following the historical development of the van der Corput sequence in the multidimensional case. This is the reason why we call the first sequence $LS$- sequence of points \`{a} la van der Corput-Hammersley and the second one $LS$-sequence \`{a} la Halton.
\chapter{Introduzione}
Da lungo tempo lo studio della Teoria dell'\emph{Uniforme Distribuzione} suscita grande interesse da parte di molti matematici.\\ Gli albori di questa teoria risalgono agli inizi del '900 e precisamente l'anno di nascita viene fatto coincidere con il 1916, quando H. Weyl pubblica il lavoro dal titolo \emph{\"{U}ber die Gleichverteilung von Zahlen mod. Eins} \cite{Weyl2}, in cui generalizza i risultati ottenuti dal matematico Kronecker sull'approssimazione di equazioni diofantee.\\ Da quella data molti passi in avanti sono stati compiuti ed il campo di indagine si \`{e} esteso dalla Teoria dell'Approssimazione a rami della matematica che vanno dall'Analisi Funzionale alla Teoria dei Numeri, dalla Teoria della Probabilit\`{a} all'Algebra Topologica. \\ Lo scopo principale di questa tesi \`{e} stato quello di estendere al quadrato unitario le $LS$-successioni di punti con $L, S \in \mathbb{N}$ soddisfacenti un'opportuna condizione, introdotte, insieme alle $LS$-successioni di partizioni, da I. Carbone in \cite{Carbone} nell'intervallo $[0,1[$. \smallskip\\ In particolare, questo lavoro si articola in tre capitoli cos\`{i} ripartiti.\smallskip\\
Nel primo capitolo si \`{e} scelto di presentare i concetti di base ed i criteri classici della Teoria dell'Uniforme Distribuzione modulo $1$, necessari per la comprensione dei capitoli successivi e che permettono di stabilire quando una successione di numeri reali \`{e} uniformemente distribuita nell'intervallo unitario, seguendo l'impostazione di \cite{Kuipers_Niederreiter}. Tra questi, uno dei risultati pi\`{u} rilevanti, in quanto si tratta di un criterio sufficiente per determinare classi di successioni uniformemente distribuite, \`{e} il Criterio di  Weyl, di cui abbiamo preso anche in esame alcune tra le sue molteplici applicazioni. \\Queste nozioni sono state poi estese, in maniera naturale, a successioni a valori in $\mathbb{R}^{s}$. \\ Infine, si \`{e} scelto di presentare i risultati esposti in precedenza da un punto di vista quantitativo e abbiamo introdotto la quantit\`{a} detta \emph{discrepanza} che ci permette di distinguere tra \lq \lq buone\rq\rq e \lq\lq cattive\rq\rq successioni di punti uniformemente distribuite. Infatti, tra le successioni uniformemente distribuite modulo 1 ve ne sono alcune che presentano una buona distribuzione, al contrario di altre che sono soltanto uniformemente distribuite.
\smallskip\\ Il Capitolo $2$ \`{e} caratterizzato dall'estensione della teoria dell'uniforme distribuzione alle successioni di partizioni, concetto introdotto dal matematico S. Kakutani in \cite{Kakutani} nel $1976$. In particolare, abbiamo preso in esame la costruzione di successioni di partizioni con la tecnica degli $\alpha$-raffinamenti introdotti dallo stesso Kakutani e pi\`{u} in generale il metodo dei $\rho$-raffinamenti introdotti da A. Vol\v{c}i\v{c} in \cite{Volcic} e abbiamo presentato la classe delle $LS$-successioni di partizioni in $[0,1[$ che fanno uso di tale costruzione.\\
 Abbiamo visto che esiste uno stretto rapporto tra le successioni di partizioni e le successioni di punti ad esse associate e abbiamo presentato due metodi, introdotti da Carbone in \cite{Carbone} e \cite{Carbone2}, per la determinazione di queste ultime, cercando di mettere in luce come il secondo metodo sia pi\`{u} efficiente rispetto al primo. \\ Abbiamo anche dato la definizione di discrepanza di successioni di partizioni, e presentato le stime per la discrepanza delle $LS$-successioni di partizioni e anche per quelle di punti ad esse associate, dimostrate in \cite{Carbone}.\\ In questo capitolo, inoltre, abbiamo dato grande spazio agli esempi in modo da rendere la trattazione degli argomenti pi\`{u} chiara e a questo scopo molti degli esempi sono stati corredati di grafici. Infine, abbiamo concluso questa seconda parte con la presentazione di un algoritmo che descrive la procedura \lq \lq  alla van der Corput\rq\rq  per determinare le $LS$-successioni di punti. Quest'ultimo ci ha permesso di implementare i risultati descritti precedentemente.
\smallskip\\ Nel terzo capitolo abbiamo dapprima considerato gli esempi classici di successioni uniformemente distribuite sul quadrato di lato unitario, cio\`{e} le successioni di J. C. van der Corput \cite{vanderCorput}, J. M. Hammersley \cite{Hammersley} e J. H. Halton \cite{Halton}. Trattandosi di successioni aventi discrepanza asintotica ottimale (dette a bassa discrepanza) abbiamo descritto brevemente il Metodo Quasi-Monte Carlo, che fa uso di successioni di questo tipo.\\Anche una sottofamiglia numerabile delle $LS$-successioni di punti (e di partizioni) \`{e} a bassa discrepanza. Pertanto, in maniera analoga a quanto fatto da van der Corput, nel passaggio dalla dimensione $1$ alla dimensione $2$, costruiamo una prima generalizzazione delle $LS$-successioni di punti sul quadrato che abbiamo chiamato $LS$-successioni ``alla van der Corput''. I risultati ottenuti fanno ben sperare che si tratti di una successione uniformemente distribuita. \\In seguito, siamo passati alla trattazione vera e propria di successioni (infinite) di tipo $LS$ sul quadrato, costruendo coppie di $LS$-successioni di punti che abbiamo chiamato $LS$-successioni``alla Halton''. Questi ultimi studi hanno portato alla luce risultati molto variegati tra loro. Dai grafici di queste successioni, infatti, si possono osservare distribuzioni di punti abbastanza soddisfacenti ed altre che presentano un comportamento alquanto bizzarro. Tra queste ultime, quella che ha suscitato il maggiore interesse \`{e} stata la successione ottenuta considerando le coppie $L=S=1$ ed $L=4$, $S=1$, il cui grafico risulta essere il pi\`{u} irregolare come vedremo in dettaglio nel corso del capitolo. \smallskip\\La tesi si conclude con la congettura che le $LS$-successioni ``alla van der Corput'' sono uniformemente distribuite. Si tratta di una congettura, dal momento che nessun risultato teorico avalla ancora la nostra tesi, ma i risultati numerici e grafici ottenuti in questa tesi rendono questo risultato plausibile. 

\mainmatter

\chapter{Successioni di punti uniformemente distribuite modulo 1}
Lo scopo di questo primo capitolo \`{e} quello di presentare i concetti di base della Teoria dell'\emph{Uniforme Distribuzione}, necessari per la comprensione dei capitoli successivi. Inizieremo con alcune definizioni ed alcuni risultati classici sulle successioni di punti uniformemente distribuite, forniremo la definizione di \emph{discrepanza} di una successione di punti e presenteremo alcuni esempi ben noti in letteratura, seguendo l'impostazione classica di \cite{Kuipers_Niederreiter}.
\section{Definizioni e nozioni preliminari}
Sia $(x_n)_{n\in \mathbb{N}}$ una successione di numeri reali. Ogni $x_n$ pu\`{o} essere scomposto nella somma della sua parte intera $\lfloor x_n\rfloor=\sup \{m \in \mathbb{Z} \mid m \leq x_n\}$ e della sua parte frazionaria $\{x_n\}=x_n - \lfloor x_n\rfloor$. Dato che la parte frazionaria di $x_n$ coincide proprio con il valore di $x_n$ modulo 1, lo studio di successioni $(x_n)$ di numeri reali modulo 1 si riconduce allo studio delle successioni delle parti frazionarie di quest'ultime nell'intervallo $[0,1[$. Se la successione \`{e} a valori nell'intervallo unitario, allora essa coincide con la successione delle sue parti frazionarie.
\begin{defn}
\label{def:1.1.1}
\rm{Una successione $(x_n)_{n \in \mathbb{N}}$ di numeri reali si dice \emph{uniformemente distribuita modulo 1} ($u.d.$ $mod\ \mathit{1}$) se per ogni coppia di numeri reali $a$ e $b$, con $0\leq a < b \leq 1$, si ha
\begin{equation}
\lim_{N \to \infty}\frac{1}{N} \sum_{n=1} ^{N} \chi_{[a, b[}(\{x_n\})=b-a\ , \label{eq:1.1}
\end{equation}
dove $\chi_{[a, b[}$ denota la funzione caratteristica dell'intervallo $[a,b[$.}
\end{defn}
Una definizione equivalente si ottiene se consideriamo intervalli del tipo $[0,c[$ , con $0 < c \leq 1$.  Infatti, vale il seguente risultato.
\begin{prop}
Una successione $(x_n)_{n \in \mathbb{N}}$ di numeri reali \`{e} u.d. mod\ 1 se e solo se
\begin{equation}
\lim_{N \to \infty}\frac{1}{N} \sum_{n=1} ^{N} \chi_{[0,c[}(\{x_n\})=c  \label{eq:1.2}
\end{equation}
per ogni numero reale $c$ in $]0,1]$.
\end{prop}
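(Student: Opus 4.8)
The plan is to prove the two implications separately; both are elementary and rest on manipulating characteristic functions of subintervals of $[0,1[$.

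\emph{Necessity.} Suppose $(x_n)_{n\in\mathbb{N}}$ is u.d. mod $1$. For any $c\in{]0,1[}$ it suffices to apply Definition~\ref{def:1.1.1} with $a=0$ and $b=c$, which is precisely \eqref{eq:1.2}. The case $c=1$ must be treated apart: since $\{x_n\}\in[0,1[$ for every $n$, we have $\chi_{[0,1[}(\{x_n\})=1$ identically, so $\frac1N\sum_{n=1}^N\chi_{[0,1[}(\{x_n\})=1\to 1$ as $N\to\infty$. (Equivalently, one can take $c_k\uparrow 1$ and pass to the limit in \eqref{eq:1.1}.)

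\emph{Sufficiency.} Conversely, assume \eqref{eq:1.2} holds for every $c\in{]0,1]}$. Fix $a,b$ with $0\le a<b\le 1$. I would use the pointwise identity
\[
\chi_{[a,b[}(t)=\chi_{[0,b[}(t)-\chi_{[0,a[}(t),\qquad t\in[0,1[,
\]
with the convention $\chi_{[0,0[}\equiv 0$ when $a=0$. Evaluating at $t=\{x_n\}$, summing over $n=1,\dots,N$ and dividing by $N$,
\[
\frac1N\sum_{n=1}^N\chi_{[a,b[}(\{x_n\})=\frac1N\sum_{n=1}^N\chi_{[0,b[}(\{x_n\})-\frac1N\sum_{n=1}^N\chi_{[0,a[}(\{x_n\}).
\]
By hypothesis the two averages on the right converge to $b$ and to $a$ respectively (the one with $a=0$ being identically $0$), so the left-hand side converges to $b-a$, i.e. \eqref{eq:1.1} holds. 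Since each limit on the right exists, the existence of the limit on the left is automatic and no further argument is needed.

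There is essentially no obstacle here: the proof is a bookkeeping argument. The only points requiring a little care are the boundary cases $a=0$ and $c=1$, and the remark that the additivity identity for $\chi_{[a,b[}$ is being applied to the fractional parts $\{x_n\}$, which always belong to $[0,1[$, so the identity is valid at every point where it is used.
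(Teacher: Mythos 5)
Your proof is correct and follows essentially the same route as the paper's: one direction is the special case $a=0$, $b=c$ of Definition~\ref{def:1.1.1}, and the other uses the decomposition $\chi_{[a,b[}=\chi_{[0,b[}-\chi_{[0,a[}$ evaluated at the fractional parts. Your explicit treatment of the boundary cases $c=1$ and $a=0$ is a small refinement over the paper's version, which leaves these implicit.
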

\begin{proof}
La dimostrazione \`{e} immediata. Infatti, se $(x_n)$ \`{e} una successione di numeri reali e se $[a,b[$ \`{e} un sottointervallo dell'intervallo unitario, allora
\begin{equation*}
\lim_{N \to \infty}\frac{1}{N} \sum_{n=1} ^{N} \chi_{[a, b[}(\{x_n\}) =  
\end{equation*}
\begin{equation*}
=\lim_{N \to \infty}\frac{1}{N} \sum_{n=1} ^{N} \chi_{[0, b[}(\{x_n\}) - \lim_{N \to \infty}\frac{1}{N} \sum_{n=1} ^{N} \chi_{[0, a[}(\{x_n\})=b-a\ .
\end{equation*}
Dunque la successione \`{e} u.d. mod 1 in $[a,b[$. \\ Il viceversa \`{e} ovvio.
\end{proof}
Per brevit\`{a} di notazione, nel seguito porremo $I=[0, 1[$.\\ Vediamo adesso un primo risultato che prende il nome dal matematico tedesco H. Weyl il quale, nel 1914, fu il primo a dimostrare questo importante criterio che fornisce una condizione necessaria e  sufficiente affinch\'{e} una successione sia u.d. mod 1.
\begin{thm}
\label{thm:1.1.3}
Una successione $(x_n)_{n\in\mathbb{N}}$ \`{e} u.d. mod 1 se e soltanto se per ogni funzione f definita su $\bar{I}$, continua e a valori reali, vale
\begin{equation}
\lim_{N \to \infty}\frac{1}{N} \sum_{n=1} ^{N} f(\{x_n\})= \int_0 ^{1} f(x)dx\ . \label{eq:1.3}
\end{equation} 
\end{thm}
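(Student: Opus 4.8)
The plan is to prove the Weyl criterion (Teorema \ref{thm:1.1.3}) by a two-directional argument, the nontrivial direction resting on an approximation of characteristic functions of intervals by continuous functions.

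\medskip

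\noindent\textbf{Necessità.} Suppose $(x_n)$ is u.d. mod $1$. First I would verify \eqref{eq:1.3} for step functions: by Definizione \ref{def:1.1.1} the limit relation holds for $f=\chi_{[a,b[}$ with $0\le a<b\le 1$, and by linearity of both the average $\frac1N\sum_{n=1}^N$ and the integral $\int_0^1$ it then holds for every finite linear combination of such characteristic functions, i.e.\ for every step function on $\bar I$. Now let $f$ be continuous on $\bar I$; since $\bar I$ is compact, $f$ is uniformly continuous and bounded. Given $\varepsilon>0$, choose step functions $f_1,f_2$ with $f_1\le f\le f_2$ on $\bar I$ and $\int_0^1(f_2-f_1)\,dx<\varepsilon$ (partition $[0,1]$ finely and use the oscillation bound from uniform continuity). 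Then
\begin{equation*}
\frac1N\sum_{n=1}^N f_1(\{x_n\})\le \frac1N\sum_{n=1}^N f(\{x_n\})\le \frac1N\sum_{n=1}^N f_2(\{x_n\}),
\end{equation*}
and letting $N\to\infty$ the outer terms converge to $\int_0^1 f_1$ and $\int_0^1 f_2$ respectively, both within $\varepsilon$ of $\int_0^1 f$. Hence $\limsup_N$ and $\liminf_N$ of the middle average both lie within $\varepsilon$ of $\int_0^1 f$; since $\varepsilon$ is arbitrary, the limit exists and equals $\int_0^1 f\,dx$.

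\medskip

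\noindent\textbf{Sufficienza.} Conversely, assume \eqref{eq:1.3} holds for all real continuous $f$ on $\bar I$. Fix $[a,b[\subseteq I$ and $\varepsilon>0$. I would sandwich $\chi_{[a,b[}$ between two continuous functions $g_1\le \chi_{[a,b[}\le g_2$ on $\bar I$ with $\int_0^1(g_2-g_1)\,dx<\varepsilon$: take $g_2$ equal to $1$ on $[a,b]$ and dropping linearly to $0$ on small flanking intervals of length $<\varepsilon/2$, and $g_1$ equal to $1$ on a slightly shrunk interval and $0$ outside $[a,b]$, both made continuous by piecewise-linear interpolation. Then for every $N$,
\begin{equation*}
\frac1N\sum_{n=1}^N g_1(\{x_n\})\le \frac1N\sum_{n=1}^N \chi_{[a,b[}(\{x_n\})\le \frac1N\sum_{n=1}^N g_2(\{x_n\}).
\end{equation*}
Passing to the limit, the hypothesis gives $\int_0^1 g_1 \le \liminf_N \le \limsup_N \le \int_0^1 g_2$, and since $\int_0^1 g_1$ and $\int_0^1 g_2$ are each within $\varepsilon$ of $b-a=\int_0^1\chi_{[a,b[}$, we get $|\limsup_N-(b-a)|<\varepsilon$ and $|\liminf_N-(b-a)|<\varepsilon$. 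As $\varepsilon>0$ was arbitrary, the limit in \eqref{eq:1.1} exists and equals $b-a$, so $(x_n)$ is u.d. mod $1$.

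\medskip

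\noindent The only delicate point is the construction of the approximating step/continuous functions together with the control $\int_0^1(f_2-f_1)\,dx<\varepsilon$; once the sandwiching is set up, both directions follow from monotonicity of the average and linearity of the limit, so I expect the approximation step to be the main (and essentially routine) obstacle, while the rest is bookkeeping with $\limsup$ and $\liminf$.
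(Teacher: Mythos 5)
La tua proposta è corretta e segue essenzialmente la stessa strada della dimostrazione del testo: nella necessità si passa dalle funzioni semplici (per linearità dalla (\ref{eq:1.1})) alle funzioni continue incastrandole fra due funzioni semplici $f_1\le f\le f_2$ con $\int_0^1(f_2-f_1)\,dx\le\varepsilon$, e nella sufficienza si incastra $\chi_{[a,b[}$ fra due funzioni continue $g_1\le\chi_{[a,b[}\le g_2$ con $\int_0^1(g_2-g_1)\,dx\le\varepsilon$, concludendo in entrambi i casi con $\liminf$, $\limsup$ e l'arbitrarietà di $\varepsilon$. L'unica differenza è di dettaglio (tu espliciti la costruzione lineare a tratti di $g_1,g_2$ e invochi l'uniforme continuità dove il testo invoca l'integrabilità secondo Riemann), ma l'argomento è lo stesso.
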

\begin{proof}
Dimostriamo che la condizione \`{e} necessaria supponendo dapprima che la $f$ sia una funzione semplice definita sull'intervallo unitario. \\Sia dunque $(x_n)$ u.d. mod 1 e la $f$ definita da $$f(x)=\sum_{i=0}^{k-1}c_i\chi_{[a_i,a_{i+1}[}(x)\ ,$$ 
dove $0=a_0 < a_1 < \ldots < a_k =1$ e $c_i \in \mathbb{R}$ per ogni $i=0,\ldots , k$. \smallskip\\ Da questa relazione e dalla (\ref{eq:1.1}) segue che
\begin{eqnarray*}
& &\lim_{N \to \infty}\frac{1}{N} \sum_{n=1} ^{N} f(\{x_n\}) =  \lim_{N \to \infty}\frac{1}{N} \sum_{n=1} ^{N} \sum_{i=0}^{k-1}c_i\chi_{[a_i,a_{i+1}[}(\{x_n\})=\\
&=& \sum_{i=0}^{k-1}c_i(a_{i+1}-a_i) = \sum_{i=0}^{k-1} \int_0 ^{1} c_i\chi_{[a_i,a_{i+1}[}(x)dx =\int_0 ^{1} f(x) dx\ .
\end{eqnarray*}
Sia ora $f$ una funzione reale continua su $\bar{I}$. Si tratta dunque di una funzione integrabile secondo Riemann e quindi, fissato $\varepsilon > 0$, esistono due funzioni semplici $f_1, f_2$ tali che
$$f_1(x)\leq f(x) \leq f_2(x)\quad \forall x \in \bar{I}$$
ed inoltre
$$\int_0 ^{1} [f_2(x)-f_1(x)]dx\leq \varepsilon \ .$$
Pertanto
$$\int_0 ^{1} f_1(x)dx \leq \int_0 ^{1} f_2(x)dx \leq \int_0 ^{1} f_1(x)dx + \varepsilon \ .$$
Da questa catena di disuguaglianze e dalla (\ref{eq:1.1}) si ha che
\begin{eqnarray*}
& &\int_0 ^{1} f(x)dx-\varepsilon \leq  \int_0 ^{1} f_1(x)dx=\lim_{N \to \infty}\frac{1}{N} \sum_{n=1} ^{N} f_1(\{x_n\})\\
&\leq & \underset{N \to \infty}{\underline{\lim}}\frac{1}{N} \sum_{n=1} ^{N} f(\{x_n\}) \leq \underset{N \to \infty}{\overline{\lim}}\frac{1}{N} \sum_{n=1} ^{N} f(\{x_n\}) \leq  \lim_{N \to \infty}\frac{1}{N} \sum_{n=1} ^{N} f_2(\{x_n\})\\
&=&\int_0 ^{1} f_2(x)dx \leq \int_0 ^{1} f_1(x) dx + \varepsilon \leq \int_0 ^{1} f(x) dx + \varepsilon\ .
\end{eqnarray*}
Dall'arbitrariet\`{a} di $\varepsilon$ segue la tesi.\\ 
Viceversa, sia $(x_n)_{n \in \mathbb{N}}$ una successione di numeri reali che verifica la $(\ref{eq:1.3})$ per ogni funzione continua in $\bar{I}$ a valori reali. Sia $[a,b[\, \subset I$ un arbitrario sottointervallo di $I$. Fissato $\varepsilon > 0$, esistono due funzioni continue $g_1$ e $g_2$ tali che
\begin{equation*}
g_1(x)\leq \chi_{[a, b[}(x)\leq g_2(x) \quad \forall x \in \bar{I}
\end{equation*}
ed inoltre
\begin{equation*}
\int_0 ^{1}[g_2(x)-g_1(x)]dx \leq \varepsilon \ .
\end{equation*}
Allora
\begin{eqnarray*}
b-a- \varepsilon &=&  \int_0 ^{1} \chi_{[a, b[}(x) dx - \varepsilon \leq \int_0 ^{1} g_2(x)dx - \varepsilon \leq \int_0 ^{1} g_1(x)dx  \\
&=& \lim_{N \to \infty}\frac{1}{N} \sum_{n=1} ^{N} g_1(\{x_n\})\leq \underset{N \to \infty}{\underline{\lim}}\frac{1}{N} \sum_{n=1} ^{N} \chi_{[a, b[}(\{x_n\}) \\
& \leq & \underset{N \to \infty}{\overline{\lim}}\frac{1}{N} \sum_{n=1} ^{N} \chi_{[a, b[}\{(x_n\})\leq \lim_{N \to \infty}\frac{1}{N} \sum_{n=1} ^{N} g_2(\{x_n\})\\ 
&=& \int_0 ^{1} g_2(x)dx \leq \int_0 ^{1} \chi_{[a, b[}(x) dx + \varepsilon = b-a + \varepsilon \ .
\end{eqnarray*}
Essendo $\varepsilon$ arbitrariamente piccolo, segue la (\ref{eq:1.1}) e quindi la tesi.
\end{proof}
Conseguenza del precedente teorema \`{e} il seguente
\begin{cor}
Una successione $(x_n)_{n \in \mathbb{N}}$ \`{e} u.d. mod 1 se e solo se per ogni funzione $f$ integrabile secondo Riemann su $\bar{I}$ vale la $(\ref{eq:1.3})$.
\end{cor}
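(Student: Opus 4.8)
The plan is to establish the two implications separately; both are quick once Theorem~\ref{thm:1.1.3} and the Darboux characterization of Riemann integrability are in hand.

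For the implication ``$(\ref{eq:1.3})$ for every Riemann-integrable $f$ $\Rightarrow$ u.d.\ mod $1$'', I would argue as follows. Every continuous function on $\bar I$ is Riemann-integrable, so the hypothesis is in particular the hypothesis of Theorem~\ref{thm:1.1.3}, and the conclusion follows at once. Equivalently, and even more directly, one may apply $(\ref{eq:1.3})$ to $f=\chi_{[a,b[}$ for an arbitrary subinterval $[a,b[\,\subset I$: this function is bounded with only two points of discontinuity, hence Riemann-integrable, and $(\ref{eq:1.3})$ for this choice of $f$ is precisely $(\ref{eq:1.1})$.

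For the converse, I would reproduce the structure of the second part of the proof of Theorem~\ref{thm:1.1.3}, using step functions in place of the continuous approximants. Let $(x_n)$ be u.d.\ mod $1$, let $f$ be Riemann-integrable on $\bar I$, and fix $\varepsilon>0$. By the Darboux criterion there is a partition $0=a_0<a_1<\dots<a_k=1$ whose associated lower and upper step functions $f_1,f_2$ satisfy $f_1\le f\le f_2$ on $\bar I$ and $\int_0^1[f_2(x)-f_1(x)]\,dx\le\varepsilon$. Since each $f_i$ is a finite linear combination of the $\chi_{[a_j,a_{j+1}[}$, the ``simple function'' computation already carried out in the proof of Theorem~\ref{thm:1.1.3} (which uses only $(\ref{eq:1.1})$) gives $\lim_{N\to\infty}\frac1N\sum_{n=1}^N f_i(\{x_n\})=\int_0^1 f_i(x)\,dx$ for $i=1,2$. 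Then
\begin{align*}
\int_0^1 f(x)\,dx-\varepsilon &\le \int_0^1 f_1(x)\,dx = \lim_{N\to\infty}\frac1N\sum_{n=1}^N f_1(\{x_n\}) \le \underset{N\to\infty}{\underline{\lim}}\,\frac1N\sum_{n=1}^N f(\{x_n\})\\
&\le \underset{N\to\infty}{\overline{\lim}}\,\frac1N\sum_{n=1}^N f(\{x_n\}) \le \lim_{N\to\infty}\frac1N\sum_{n=1}^N f_2(\{x_n\}) = \int_0^1 f_2(x)\,dx \le \int_0^1 f(x)\,dx+\varepsilon,
\end{align*}
and letting $\varepsilon\to 0$ forces the liminf and limsup to coincide and equal $\int_0^1 f(x)\,dx$, which is exactly $(\ref{eq:1.3})$.

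I do not anticipate a genuine obstacle: the whole content is the observation that Riemann integrability furnishes sandwiching step functions with arbitrarily small integral gap, and that $(\ref{eq:1.3})$ for step functions is already contained in the proof of Theorem~\ref{thm:1.1.3}. The only point requiring the same care as in that proof is keeping $\underline{\lim}$ and $\overline{\lim}$ separate in the chain of inequalities until $\varepsilon$ has been sent to $0$, since a priori we do not know that the averages $\frac1N\sum_{n=1}^N f(\{x_n\})$ converge for a general Riemann-integrable $f$.
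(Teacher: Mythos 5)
Your argument is correct and follows essentially the same route as the paper: the forward implication by specializing to continuous functions (the paper uses exactly this observation), and the converse by sandwiching a Riemann-integrable $f$ between step functions with integral gap at most $\varepsilon$ and invoking the validity of $(\ref{eq:1.3})$ for simple functions already established in the proof of Theorem~\ref{thm:1.1.3}. The only cosmetic difference is that you phrase the conclusion via $\underline{\lim}$/$\overline{\lim}$ while the paper works with an explicit $\bar N$ and an $\varepsilon/2$ splitting; the two are equivalent.
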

\begin{proof}
Per la condizione sufficiente occorre solo osservare che ogni funzione continua \`{e} integrabile secondo Riemann e quindi applicare il teorema precedente. \\ La parte non banale della dimostrazione consiste nel provare la condizione necessaria. Procediamo in maniera analoga a quanto fatto nella prima parte del Teorema \ref{thm:1.1.3}. Supponiamo che $f$ sia una funzione integrabile secondo Riemann e che valga la condizione $(\ref{eq:1.1})$. Allora, fissato $\varepsilon > 0$, esistono due funzioni semplici $h_1, h_2$ tali che
\begin{equation*}
h_1(x)\leq f(x)\leq h_2(x) \quad \forall x \in \bar{I}
\end{equation*}
e
\begin{equation*}
\int_0 ^{1}[h_2(x)-h_1(x)]dx \leq \frac{\varepsilon}{2} \ .
\end{equation*}
Allora esiste $\bar{N}$ tale che per ogni $N > \bar{N}$ si ha
\begin{equation*}
\left\vert \frac{1}{N} \sum_{n=1} ^{N} h_k(\{x_n\}) - \int_0 ^{1} h_k(x)dx \right\vert < \frac{\varepsilon}{2} \qquad \text{per} \ k=1,2
\end{equation*}
perch\`{e} la $(\ref{eq:1.3})$ vale per funzioni semplici. Di conseguenza
\begin{eqnarray*}
\int_0 ^{1} h_1(x)dx - \frac{\varepsilon}{2} & < &  \frac{1}{N} \sum_{n=1} ^{N} h_1(\{x_n\})\leq \frac{1}{N} \sum_{n=1} ^{N} f\{(x_n\}) \\
&\leq & \frac{1}{N} \sum_{n=1} ^{N} h_2(\{x_n\}) < \int_0 ^{1} h_2(x)dx + \frac{\varepsilon}{2} \ .
\end{eqnarray*}
Dalla condizione
\begin{equation*}
\int_0 ^{1} h_1(x) dx \leq \int_0 ^{1} f(x) dx \leq \int_0 ^{1} h_2(x) dx
\end{equation*}
segue la tesi.
\end{proof}
Vediamo ora una generalizzazione del Teorema \ref{thm:1.1.3} al caso di funzioni periodiche a valori complessi, che ci sar\`{a} utile nel paragrafo successivo.
\begin{cor}
\label{cor:1.1.5}
Una successione $(x_n)_{n \in \mathbb{N}}$ \`{e} u.d. mod 1 se e solo se per ogni funzione continua $f:\mathbb{R} \longrightarrow \mathbb{C}$ periodica di periodo unitario si ha
\begin{equation}
 \lim_{N \to \infty}\frac{1}{N} \sum_{n=1} ^{N} f(x_n)= \int_0 ^{1} f(x)dx \ . \label{eq:1.4}
\end{equation}
\end{cor}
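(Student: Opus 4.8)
The plan is to reduce the statement to Theorem \ref{thm:1.1.3} by means of one elementary remark: if $f:\mathbb{R}\to\mathbb{C}$ is periodic of period $1$, then $f(x_n)=f(\{x_n\})$ for every $n$, since $x_n-\{x_n\}=\lfloor x_n\rfloor\in\mathbb{Z}$. Thus the averages in $(\ref{eq:1.4})$ coincide with the averages of $f$ over the fractional parts of the sequence, which is exactly the situation covered by Theorem \ref{thm:1.1.3}, modulo the passage from real- to complex-valued functions.

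\emph{Necessity.} Suppose $(x_n)_{n\in\mathbb{N}}$ is u.d.\ mod $1$ and let $f:\mathbb{R}\to\mathbb{C}$ be continuous and $1$-periodic. Writing $f=u+iv$ with $u=\operatorname{Re}f$ and $v=\operatorname{Im}f$, both $u$ and $v$ are real-valued, continuous and $1$-periodic, hence their restrictions to $\bar I$ are continuous real functions on $\bar I$. Applying Theorem \ref{thm:1.1.3} to $u$ and to $v$, and using $f(x_n)=f(\{x_n\})$, we get $\frac1N\sum_{n=1}^N u(x_n)\to\int_0^1 u(x)\,dx$ and the analogous limit for $v$; adding these and invoking the linearity of the limit and of the integral yields $(\ref{eq:1.4})$.

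\emph{Sufficiency.} Conversely, assume $(\ref{eq:1.4})$ holds for every continuous $1$-periodic $f:\mathbb{R}\to\mathbb{C}$; a fortiori it holds for every real-valued such $f$. By Definition \ref{def:1.1.1} it is enough to establish $(\ref{eq:1.1})$ for an arbitrary subinterval $[a,b[\,\subseteq I$. Fix $\varepsilon>0$ and pick continuous $1$-periodic functions $g_1,g_2:\mathbb{R}\to\mathbb{R}$ with $g_1(x)\le\chi_{[a,b[}(x)\le g_2(x)$ for all $x\in I$ and $\int_0^1[g_2(x)-g_1(x)]\,dx\le\varepsilon$; such functions are obtained by inserting linear ramps of width $<\varepsilon$ around the jumps at $a$ and $b$ (and, when $a=0$ or $b=1$, adjusting at the endpoints, which is allowed precisely because we are free to use periodic functions). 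Since $g_k(x_n)=g_k(\{x_n\})$ and $g_1(\{x_n\})\le\chi_{[a,b[}(\{x_n\})\le g_2(\{x_n\})$, an argument entirely analogous to the second half of the proof of Theorem \ref{thm:1.1.3}, now with $(\ref{eq:1.4})$ playing the role of $(\ref{eq:1.3})$, gives, with $A_N:=\frac1N\sum_{n=1}^N\chi_{[a,b[}(\{x_n\})$,
\[
b-a-\varepsilon\ \le\ \liminf_{N\to\infty}A_N\ \le\ \limsup_{N\to\infty}A_N\ \le\ b-a+\varepsilon .
\]
Letting $\varepsilon\to0$ gives $(\ref{eq:1.1})$, i.e.\ $(x_n)$ is u.d.\ mod $1$.

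The one point requiring care — and the only non-routine step — is this last approximation: the hypothesis $(\ref{eq:1.4})$ refers solely to periodic functions, so the squeezing functions $g_1,g_2$ must be chosen periodic (equivalently, continuous on $\bar I$ with matching endpoint values), not merely continuous on $\bar I$, in order for $(\ref{eq:1.4})$ to be applicable to them. Everything else is the standard real/imaginary decomposition together with the identity $f(x_n)=f(\{x_n\})$.
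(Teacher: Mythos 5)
Your proof is correct and follows essentially the same route as the paper: real/imaginary decomposition plus the identity $f(x_n)=f(\{x_n\})$ for necessity, and for sufficiency the same squeeze of $\chi_{[a,b[}$ by continuous functions chosen with matching values at $0$ and $1$ so that they extend periodically and fall under the hypothesis. The point you flag as the only delicate step — that $g_1,g_2$ must be taken periodic rather than merely continuous on $\bar I$ — is exactly the remark the paper makes.
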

\begin{proof}
Supponiamo che $f:\mathbb{R} \rightarrow \mathbb{C}$ sia continua. Allora, applicando il Teorema \ref{thm:1.1.3} separatamente alla parte reale ed alla parte immaginaria di $f$, si ottiene che la (\ref{eq:1.3}) 
vale anche per funzioni a valori complessi. Per ottenere la (\ref{eq:1.4}) \`{e} sufficiente osservare che la periodicit\`{a} della $f$ implica che $f(\{x_n\})=f(x_n)$. \\ Per dimostrare l'altra implicazione basta richiedere che le funzioni $g_1$ e $g_2$, utilizzate nella dimostrazione del Teorema \ref{thm:1.1.3}, soddisfino l'ulteriore condizione $g_1(0)=g_1(1)$ e $g_2(0)=g_2(1)$, cosicch\`{e} entrambe possono essere prolungate su tutto $\mathbb{R}$ per periodicit\`{a} e soddisfano la (\ref{eq:1.4}).  
\end{proof}
La prossima  proposizione rappresenta un metodo efficace per generare nuove successioni u.d. a partire da una successione di cui \`{e} nota la propriet\`{a} di uniforme distribuzione.
\begin{prop}
Se $(x_n)_{n \in \mathbb{N}}$ \`{e} una successione u.d. mod 1 e se $(y_n)_{n \in \mathbb{N}}$ \`{e} una successione tale che $\lim_{n \to \infty}|x_n - y_n|=0$, allora $(y_n)_{n \in \mathbb{N}}$ \`{e} u.d. mod 1.
\end{prop}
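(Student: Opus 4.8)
The plan is to argue via the Weyl-type criterion of Corollary~\ref{cor:1.1.5}: it is enough to show that for every continuous function $f\colon\mathbb{R}\to\mathbb{C}$ of period $1$ one has
\begin{equation*}
\lim_{N\to\infty}\frac{1}{N}\sum_{n=1}^N f(y_n)=\int_0^1 f(x)\,dx .
\end{equation*}
The decisive remark is that such an $f$, being continuous and $1$-periodic, is in fact \emph{uniformly} continuous on all of $\mathbb{R}$ and bounded, say $|f(x)|\le M$ for every $x$.

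First I would fix $\varepsilon>0$ and pick $\delta>0$ such that $|s-t|<\delta$ implies $|f(s)-f(t)|<\varepsilon$. Since $|x_n-y_n|\to0$, there is an index $n_0$ with $|x_n-y_n|<\delta$, hence $|f(x_n)-f(y_n)|<\varepsilon$, for every $n\ge n_0$. Splitting the sum at $n_0$ I would then estimate
\begin{equation*}
\left|\frac{1}{N}\sum_{n=1}^N f(y_n)-\frac{1}{N}\sum_{n=1}^N f(x_n)\right|\le\frac{1}{N}\sum_{n=1}^{n_0-1}\bigl|f(y_n)-f(x_n)\bigr|+\frac{1}{N}\sum_{n=n_0}^{N}\bigl|f(y_n)-f(x_n)\bigr|\le\frac{2M(n_0-1)}{N}+\varepsilon .
\end{equation*}
Letting $N\to\infty$ the first term on the right vanishes, so the limit superior of the left-hand side is at most $\varepsilon$; since $\varepsilon$ is arbitrary, the Ces\`{a}ro means of $\bigl(f(x_n)\bigr)$ and of $\bigl(f(y_n)\bigr)$ have the same asymptotic behaviour.

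Because $(x_n)$ is u.d.\ mod~$1$, Corollary~\ref{cor:1.1.5} gives $\frac{1}{N}\sum_{n=1}^N f(x_n)\to\int_0^1 f(x)\,dx$, hence also $\frac{1}{N}\sum_{n=1}^N f(y_n)\to\int_0^1 f(x)\,dx$; applying Corollary~\ref{cor:1.1.5} in the other direction then yields that $(y_n)$ is u.d.\ mod~$1$. I do not expect a genuine difficulty here; the only subtlety is why one should pass to continuous periodic test functions rather than work directly with $\chi_{[a,b[}$ as in Definition~\ref{def:1.1.1}: an arbitrarily small perturbation of $x_n$ can push a term across an endpoint $a$ or $b$, so the naive estimate for $\frac{1}{N}\sum\chi_{[a,b[}(\{y_n\})$ breaks down, whereas uniform continuity of $f$ is exactly the ingredient that makes the perturbation harmless.
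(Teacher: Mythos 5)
Your argument is correct, but it is not the route the paper takes. The paper proves the proposition directly from Definition~\ref{def:1.1.1}: it fixes $a$ and $\varepsilon<\min\{a,1-a\}$, observes that for large $n$ the implications $\{x_n\}<a-\varepsilon\Rightarrow\{y_n\}<a$ and $\{y_n\}<a\Rightarrow\{x_n\}<a+\varepsilon$ hold, and sandwiches $\frac{1}{N}\sum\chi_{[0,a[}(\{y_n\})$ between the corresponding averages of $\chi_{[0,a-\varepsilon[}(\{x_n\})$ and $\chi_{[0,a+\varepsilon[}(\{x_n\})$, which converge to $a-\varepsilon$ and $a+\varepsilon$. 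That proof is more elementary (it needs nothing beyond the definition), but it quietly skirts exactly the difficulty you single out at the end of your proposal: the implication $\{x_n\}<a-\varepsilon\Rightarrow\{y_n\}<a$ can fail when $\{x_n\}$ is within $\varepsilon$ of $0$ and $y_n$ drops just below $\lfloor x_n\rfloor$, since then $\{y_n\}$ wraps around to near $1$; the paper does not address this boundary case. Your route through Corollary~\ref{cor:1.1.5} trades the elementary counting argument for the heavier equivalence with continuous $1$-periodic test functions, but in exchange the uniform continuity of $f$ on all of $\mathbb{R}$ (applied to $f(x_n)$ and $f(y_n)$ themselves, not to their fractional parts) makes the wrap-around issue disappear entirely, and your splitting of the sum at $n_0$ with the $\frac{2M(n_0-1)}{N}+\varepsilon$ bound is complete and correct. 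Both proofs are standard; yours is arguably the cleaner one to make fully rigorous.
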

\begin{proof}
Fissiamo $a\in\ ]0,1[$ e $\varepsilon > 0$ tale che $\varepsilon < \min \{a, 1-a\}$. Allora dall'ipotesi $\lim_{n \to \infty}|x_n - y_n|=0$ segue che, per $n$ sufficientemente grande, 
\begin{eqnarray*}
& &\text{se}\quad \{x_n\}<a-\varepsilon \qquad \text{allora} \qquad \{y_n\}<a\\
& &\text{e\ se}\quad \{y_n\}<a \qquad \text{segue\ che} \qquad \{x_n\}<a+\varepsilon\ .
\end{eqnarray*}
Pertanto
\begin{eqnarray*}
& & a- \varepsilon = \lim_{N \to \infty}\frac{1}{N} \sum_{n=1} ^{N}  \chi_{[0, a- \varepsilon[}(\{x_n\})\leq \underset{N \to \infty}{\underline{\lim}}\frac{1}{N} \sum_{n=1} ^{N} \chi_{[0, a[}(\{y_n\}) \\
&\leq & \underset{N \to \infty}{\overline{\lim}}\frac{1}{N} \sum_{n=1} ^{N} \chi_{[0, a[}(\{y_n\})\leq \lim_{N \to \infty}\frac{1}{N} \sum_{n=1} ^{N} \chi_{[0, a+ \varepsilon[}(\{x_n\}) = a+\varepsilon \ .
\end{eqnarray*}
Nuovamente dall'arbitrariet\`{a} di $\varepsilon$ segue la tesi.
\end{proof}

\section{Il Criterio di Weyl}
In questo paragrafo tratteremo l'importante criterio di Weyl. 
Esso prende in esame le funzioni della forma $f(x)=e^{2\pi ihx}$, dove $h$ \`{e} un intero non nullo. Dimostreremo che queste funzioni soddisfano le ipotesi del Corollario \ref{cor:1.1.5}, e pertanto esse forniscono un criterio sufficiente a determinare se una successione \`{e} u.d..
\begin{thm}[Criterio di Weyl]
\label{thm:1.2.1}
La successione $(x_n)_{n\in\mathbb{N}}$ \`{e} u.d. mod 1 se e solo se
\begin{equation}
\lim_{N \to \infty}\frac{1}{N} \sum_{n=1} ^{N} e^{2\pi ihx_n} =0 \label{eq:1.5}
\end{equation}
per ogni intero $h\neq 0$.
\end{thm}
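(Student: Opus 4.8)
\section*{Proof proposal}

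The plan is to read off both implications from Corollario \ref{cor:1.1.5}, which already equates uniform distribution of $(x_n)$ with the convergence $\frac{1}{N}\sum_{n=1}^{N} f(x_n) \to \int_0^1 f(x)\,dx$ for every continuous $f:\mathbb{R}\to\mathbb{C}$ of period $1$. For the necessity I would fix an integer $h \neq 0$ and apply that criterion to $f(x) = e^{2\pi i h x}$, which is continuous on $\mathbb{R}$ and $1$-periodic; since $\int_0^1 e^{2\pi i h x}\,dx = \frac{1}{2\pi i h}\bigl(e^{2\pi i h}-1\bigr) = 0$, the limit (\ref{eq:1.5}) follows at once. This direction is immediate.

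The real content is the sufficiency. Assume (\ref{eq:1.5}) holds for every $h \neq 0$, and note that for $h = 0$ the average $\frac{1}{N}\sum_{n=1}^{N} 1$ equals $1 = \int_0^1 1\,dx$. By linearity we then get $\frac{1}{N}\sum_{n=1}^{N} P(x_n) \to \int_0^1 P(x)\,dx$ for every trigonometric polynomial $P(x) = \sum_{h=-m}^{m} c_h e^{2\pi i h x}$. To pass from trigonometric polynomials to an arbitrary continuous $1$-periodic $f$, I would invoke the (trigonometric) Weierstrass approximation theorem: for each $\varepsilon > 0$ pick a trigonometric polynomial $P$ with $\sup_{x\in\mathbb{R}} |f(x) - P(x)| \le \varepsilon$. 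Then
\[
\left| \frac{1}{N}\sum_{n=1}^{N} f(x_n) - \int_0^1 f(x)\,dx \right| \le \frac{1}{N}\sum_{n=1}^{N} |f(x_n) - P(x_n)| + \left| \frac{1}{N}\sum_{n=1}^{N} P(x_n) - \int_0^1 P(x)\,dx \right| + \int_0^1 |P(x) - f(x)|\,dx ,
\]
where the first and third terms are at most $\varepsilon$ uniformly in $N$, while the middle term tends to $0$ by the trigonometric-polynomial case. Letting $N\to\infty$ and then $\varepsilon\to 0$ yields $\frac{1}{N}\sum_{n=1}^{N} f(x_n)\to\int_0^1 f(x)\,dx$ for every such $f$, and Corollario \ref{cor:1.1.5} then gives that $(x_n)$ is u.d. mod 1.

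The only non-elementary ingredient here is the trigonometric form of the Weierstrass theorem; everything else is the $3\varepsilon$-bookkeeping above, so I expect that to be the main (and essentially the sole) obstacle. If a self-contained treatment is preferred, the approximating $P$ can be produced explicitly as a Fej\'er mean of the Fourier series of $f$, which converges uniformly to $f$ precisely because $f$ is continuous and periodic.
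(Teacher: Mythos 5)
Your proposal is correct and follows essentially the same route as the paper: necessity by applying Corollario \ref{cor:1.1.5} to $f(x)=e^{2\pi i h x}$, and sufficiency via the trigonometric Weierstrass theorem together with the standard three-term $\varepsilon$-estimate, concluding again through Corollario \ref{cor:1.1.5}. Your explicit handling of the $h=0$ term and the remark on Fej\'er means are minor refinements of the same argument.
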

\begin{proof}
Sia $(x_n)$ una successione u.d. mod 1. Allora, per il Corollario \ref{cor:1.1.5} e per la periodicit\`{a} della funzione integranda, si ha
\begin{equation*}
\lim_{N \to \infty}\frac{1}{N} \sum_{n=1} ^{N} e^{2\pi ihx_n}= \int_0 ^{1} e^{2\pi ihx} dx=\dfrac{e^{2\pi ihx}}{2\pi ih} \Bigg|_{x=0}^{x=1} =0 \ .\smallskip
\end{equation*}
Viceversa, supponiamo che la successione verifichi la condizione (\ref{eq:1.5}) e che la $f$ sia una generica funzione continua su $[0,1]$, con $f(0)=f(1)$. Per il teorema di approssimazione di Weierstrass, fissato $\varepsilon >0$ esiste un polinomio trigonometrico $\psi (x)$, cio\`{e} una combinazione lineare di funzioni del tipo $e^{2\pi ihx}$, tale che
\begin{equation*}
\sup_{x \in [0,1]}|f(x)-\psi (x)|< \frac{\varepsilon}{3}\ .
\end{equation*}
Allora
\begin{eqnarray*}
& &\left |\int_0 ^{1} f(x)dx-\frac{1}{N} \sum_{n=1} ^{N}f(\{x_n\})\right | \leq \left |\int_0 ^{1} (f(x)-\psi (x))dx\right | + \\
&+&\left |\int_0 ^{1}\psi (x) - \frac{1}{N} \sum_{n=1} ^{N} \psi (x_n)\right | + \left |\frac{1}{N} \sum_{n=1} ^{N} (\psi (x_n)-f(\{x_n\}))\right | \leq \varepsilon
\end{eqnarray*}
poich\`{e}, per la (\ref{eq:1.5}), il secondo addendo \`{e} maggiorato da $\frac{\varepsilon}{3}$ per $N$ sufficientemente grande.
\end{proof}
\subsection*{Applicazioni del Criterio di Weyl}
Volgiamo ora la nostra attenzione ad alcune applicazioni del criterio di Weyl. Nella prima parte di questo paragrafo prendiamo in esame una successione numerica speciale.
\begin{eso} 
\rm{Sia $\theta$ un numero irrazionale. Allora la successione $(n\theta)_{n\in \mathbb{N}}$ \`{e} u.d. mod 1. Infatti
\begin{equation*}
\left| \frac{1}{N} \sum_{n=1} ^{N} e^{2\pi ihn\theta} \right|=\left| \frac{1}{N}\left( \sum_{n=0} ^{N} e^{2\pi ihn\theta}-1 \right)\right|=\left| \frac{1}{N}\left( \frac{1-( e^{2\pi ih\theta})^{N+1}}{1-e^{2\pi ih\theta}}-1\right)\right| \medskip
\end{equation*}
\begin{equation*}
=\frac{1}{N}\left| \frac{1-e^{2\pi ih\theta(N+1)}-1+ e^{2\pi ih\theta}}{1-e^{2\pi ih\theta}}\right| = \frac{1}{N}\left| e^{2\pi ih\theta}\right|\left| \frac{1-e^{2\pi ihN \theta}}{1-e^{2\pi ih\theta}} \right| =\medskip
\end{equation*}
\begin{equation*}
=\frac{1}{N}\left| \frac{1-e^{2\pi ihN \theta}}{1-e^{2\pi ih\theta}} \right|\ .
\end{equation*}
\medskip
A questo punto osserviamo che, posto $\alpha = \pi h \theta$, si ha:
\begin{eqnarray*}
|1-e^{2 i \alpha}|&=& |1-\cos(2 \alpha)-i \sin(2\alpha)|=\sqrt{(1-\cos(2\alpha))^{2}+\sin^{2}(2\alpha)} \\
& = & \sqrt{2(1-\cos(2\alpha))}=\sqrt{2(1-\cos^{2}\alpha+\sin^{2}\alpha)}=2|\sin \alpha|\ .
\end{eqnarray*}
Allora vale la $(\ref{eq:1.5})$ perch\`{e}
\begin{equation*}
\left| \frac{1}{N} \sum_{n=1} ^{N} e^{2\pi ihn\theta} \right|= \frac{1}{N}\left|\frac{1-e^{2\pi ihN \theta}}{2\sin( \pi h \theta)} \right| \leq \frac{1}{N|\sin (\pi h \theta)|}
\end{equation*}
\`{e} una quantit\`{a} infinitesima per $N$ che tende ad infinito. Dal Teorema \ref{thm:1.2.1} segue che la successione $(n\theta)$ \`{e} u.d. mod 1.}
\bigskip \\
\rm{Il fatto che la successione $(n\theta)_{n\in\mathbb{N}}$ sia u.d. mod 1 se $\theta$ \`{e} un numero irrazionale fu stabilito in maniera indipendente da P. Bohl in \cite{Bohl}, W. Sierpi\'{n}ski in \cite{Sierpinski1} e \cite{Sierpinski2} e Weyl in \cite{Weyl1} nel 1909-1910. L'importanza di questo risultato risiede nel fatto che esso segna l'inizio della teoria dell'uniforme distribuzione. 
Infatti la successione $(n\theta)$ fu il primo esempio di successione u.d. mod 1 fornito dallo stesso Weyl ed esso perfeziona l'enunciato di un teorema di Kronecker, il quale asserisce che la successione $(n\theta)$ \`{e} densa in $[0,1]$.}
\end{eso}
Presentiamo ora un risultato che \`{e} una conseguenza del criterio di Weyl e che ci consentir\`{a} di determinare molte classi di successioni u.d..
\begin{thm}
\label{thm:1.2.3}
Sia $(f(n))_{n\in\mathbb{N}}$ una successione di numeri reali tale che la successione $\Delta f(n)=f(n+1)-f(n),\ n=1,2,\ldots,$ sia monotona. Supponiamo inoltre che
\begin{equation}
\lim_{n \to \infty} \Delta f(n)=0 \quad e \quad \lim_{n \to \infty} n|\Delta f(n)|=+\infty \ . \label{eq:1.6}
\end{equation}
Allora la successione $(f(n))$ \`{e} u.d. mod 1.
\end{thm}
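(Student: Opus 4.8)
The plan is to verify Weyl's criterion (Theorem~\ref{thm:1.2.1}): it suffices to show that $\frac1N\sum_{n=1}^{N}e^{2\pi i h f(n)}\to 0$ for every nonzero integer $h$. Since $f$ is real, the exponential sum for $-h$ is the complex conjugate of the one for $h$, so I may take $h\ge1$; and since replacing $f$ by $-f$ does not affect uniform distribution (again by Weyl's criterion), while a monotone sequence $\Delta f(n)$ with limit $0$ must have constant sign and, by $n|\Delta f(n)|\to+\infty$, cannot vanish, I may assume $\Delta f(n)>0$ for all $n$ and $\Delta f(n)\downarrow0$. Fix $\varepsilon>0$ and $h\ge1$.

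The core of the proof is a summation by parts. Pick an integer $M$ large enough that $h\,\Delta f(n)\le\frac12$ for all $n\ge M$, and write, for $M\le n\le N$,
\[
e^{2\pi i h f(n)}=a_n b_n,\qquad a_n:=\frac{1}{e^{2\pi i h\Delta f(n)}-1},\qquad b_n:=e^{2\pi i h f(n+1)}-e^{2\pi i h f(n)}.
\]
The partial sums $B_n:=\sum_{k=M}^{n}b_k=e^{2\pi i h f(n+1)}-e^{2\pi i h f(M)}$ satisfy $|B_n|\le 2$, so Abel's summation formula yields
\[
\left|\sum_{n=M}^{N}e^{2\pi i h f(n)}\right|\le 2|a_N|+2\sum_{n=M}^{N-1}|a_{n+1}-a_n|.
\]
Using $|e^{2\pi i\theta}-1|=2|\sin\pi\theta|$, the bound $2|\theta|\le|\sin\pi\theta|$ valid for $|\theta|\le\frac12$, and $|\sin\pi\theta|\le\pi|\theta|$, I get $|a_n|\le\dfrac{1}{4h\,\Delta f(n)}$; and, after writing $a_{n+1}-a_n$ as a single fraction and estimating its numerator and denominator,
\[
|a_{n+1}-a_n|\le\frac{\pi}{8h}\left(\frac{1}{\Delta f(n+1)}-\frac{1}{\Delta f(n)}\right),
\]
the right-hand side being nonnegative since $\Delta f$ decreases. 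This last bound telescopes, so $\displaystyle\sum_{n=M}^{N-1}|a_{n+1}-a_n|\le\frac{\pi}{8h\,\Delta f(N)}$.

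Here the hypothesis $n|\Delta f(n)|\to+\infty$ enters decisively: given any $C>0$ I may choose $M=M(C,h)$ so that, in addition to the requirement above, $\Delta f(n)\ge C/n$ for all $n\ge M$. Then $|a_N|\le\frac{N}{4hC}$ and $\sum_{n=M}^{N-1}|a_{n+1}-a_n|\le\frac{\pi N}{8hC}$, hence $\bigl|\sum_{n=M}^{N}e^{2\pi i h f(n)}\bigr|\le\frac{(2+\pi)N}{4hC}$ and therefore
\[
\frac1N\left|\sum_{n=1}^{N}e^{2\pi i h f(n)}\right|\le\frac{M}{N}+\frac{2+\pi}{4hC}.
\]
Now fix $C$ so large that $\frac{2+\pi}{4hC}<\varepsilon/2$ (this fixes $M$), and let $N\to\infty$ so that $M/N<\varepsilon/2$: the limsup is at most $\varepsilon$, hence, $\varepsilon$ being arbitrary, the limit is $0$. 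By Theorem~\ref{thm:1.2.1}, $(f(n))$ is u.d. mod 1.

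The trigonometric inequalities and the single-fraction manipulation of $a_{n+1}-a_n$ are routine; the delicate point is the bookkeeping of the quantifiers. The condition $\Delta f(n)\to0$ is what makes $a_n$ well defined and comparable to $1/(2\pi i h\,\Delta f(n))$, so that the geometric-type bound on $|a_n|$ is available in the first place, while $n|\Delta f(n)|\to+\infty$ is precisely what turns the telescoped gain $1/\Delta f(N)$ into $N/C$ with $C$ at our disposal. Without the second condition the estimate would only read $O\!\bigl(M/N+1/(h\,N\,\Delta f(N))\bigr)$, which need not vanish, so both hypotheses of the theorem are genuinely used in this argument.
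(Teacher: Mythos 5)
Your proof is correct, and it follows a genuinely different route from the paper's, even though both ultimately rest on Weyl's criterion and a telescoping/summation-by-parts device. The paper linearizes the exponential: starting from $|e^{2\pi iu}-e^{2\pi iv}-2\pi i(u-v)e^{2\pi iv}|\leq 2\pi^{2}(u-v)^{2}$ it divides by $|\Delta f(n)|$, telescopes the main term $\frac{e^{2\pi ihf(n+1)}}{\Delta f(n+1)}-\frac{e^{2\pi ihf(n)}}{\Delta f(n)}$, and is left with an error $\pi|h|\frac{1}{N}\sum_{n<N}|\Delta f(n)|$ that it kills by a Ces\`{a}ro argument using $\Delta f(n)\to 0$, plus a boundary term $\frac{1}{\pi|h|}\frac{1}{N|\Delta f(N)|}$ killed by $n|\Delta f(n)|\to\infty$. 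You instead keep the exact identity $e^{2\pi ihf(n)}=\bigl(e^{2\pi ihf(n+1)}-e^{2\pi ihf(n)}\bigr)/\bigl(e^{2\pi ih\Delta f(n)}-1\bigr)$ and Abel-sum against the bounded partial sums of the numerators; the price is Jordan's inequality $2|\theta|\leq|\sin\pi\theta|$ (which is why you must first discard the initial segment where $h\Delta f(n)>\tfrac12$) and the quantifier bookkeeping with $C$ and $M$, but the reward is that no quadratic error term ever appears and the hypothesis $\Delta f(n)\to 0$ is used only to make the sine bounds applicable rather than through a Ces\`{a}ro mean. Your reductions (to $h\geq 1$ by conjugation, to $\Delta f$ positive decreasing by replacing $f$ with $-f$) are sound, and your estimate $|a_{n+1}-a_n|\leq\frac{\pi}{8h}\bigl(\frac{1}{\Delta f(n+1)}-\frac{1}{\Delta f(n)}\bigr)$ checks out, so the telescoping and the final $\varepsilon$-argument go through. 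The paper's version has the small advantage of yielding an explicit quantitative bound valid for all $N$ without splitting off an initial segment; yours isolates more cleanly where each of the two hypotheses in (\ref{eq:1.6}) is actually needed.
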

\begin{proof}
Fissiamo due numeri reali $u$ e $v$. Allora, integrando per parti $4 \pi^{2} \int_0 ^{u-v}(u-v-w)e^{2\pi iw}dw$ ed effettuando le opportune maggiorazioni si ha che
\begin{equation*}
|e^{2\pi iu}-e^{2\pi iv}-2\pi i (u-v)e^{2\pi iv}|= |e^{2\pi iv}|\ |e^{2\pi i(u-v)}-1-2\pi i(u-v)|=
\end{equation*}
 \begin{equation*}
 = 4 \pi^{2}\left | \int_0 ^{u-v}(u-v-w)e^{2\pi iw}dw \right| \leq 4 \pi^{2}\int_0 ^{u-v}|u-v-w|dw=
 \end{equation*}
 \begin{equation*}
 = 4 \pi^{2}\left.\left[\left( (u-v)w -\frac{w^{2}}{2} \right)\right]\right |_{w=0}^{w=u-v}  = 4 \pi^{2}\dfrac{(u-v)^{2}}{2} = 2 \pi^{2}(u-v)^{2}\ .
 \medskip
 \end{equation*}
Ponendo nella disuguaglianza precedente $u=hf(n+1)$ e $v=hf(n)$, con $h \in \mathbb{Z},\ h\neq 0$, si ottiene \medskip
\begin{equation*}
\left |e^{2\pi ihf(n+1)}-e^{2\pi ihf(n)}-2\pi ih\Delta f(n)e^{2\pi ihf(n)}\right |\leq 2 \pi^{2}h^{2}\left(\Delta f(n)\right)^{2} \qquad (n\geq 1) \ .
\end{equation*}
Dividendo ambo i membri della disuguaglianza per $\left|\Delta f(n)\right |$ si ha
\begin{equation*}
\left | \dfrac{e^{2\pi ihf(n+1)}}{\Delta f(n)}- \dfrac{e^{2\pi ihf(n)}}{\Delta f(n)} -2\pi ihe^{2\pi ihf(n)} \right | \leq 2 \pi^{2}h^{2}\left |\Delta f(n)\right |\ . \medskip
\end{equation*}
Da questa relazione segue che \medskip
\begin{equation*}
\left | \dfrac{e^{2\pi ihf(n+1)}}{\Delta f(n+1)}- \dfrac{e^{2\pi ihf(n)}}{\Delta f(n)} -2\pi ihe^{2\pi ihf(n)} \right |
\medskip
\end{equation*}
\begin{equation*}
\leq \left | \dfrac{e^{2\pi ihf(n+1)}}{\Delta f(n+1)}- \dfrac{e^{2\pi ihf(n+1)}}{\Delta f(n)}\right |+ \left | \dfrac{e^{2\pi ihf(n+1)}}{\Delta f(n)}- \dfrac{e^{2\pi ihf(n)}}{\Delta f(n)} -2\pi ihe^{2\pi ihf(n)}\right |
\medskip
\end{equation*}
\begin{equation*}
\leq \left | \dfrac{1}{\Delta f(n+1)}-\dfrac{1}{\Delta f(n)} \right | + 2 \pi^{2}h^{2}|\Delta f(n)|
\end{equation*}
per $n \geq 1$. \\ Dunque otteniamo la stima
$$\left | 2\pi ih \sum_{n=1} ^{N-1}e^{2\pi ihf(n)} \right |$$ \medskip
$$= \left | \sum_{n=1} ^{N-1}\left(2\pi ie^{2\pi ihf(n)}- \dfrac{e^{2\pi ihf(n+1)}}{\Delta f(n+1)}+\dfrac{e^{2\pi ihf(n)}}{\Delta f(n)} \right)+\dfrac{e^{2\pi ihf(N)}}{\Delta f(N)}-\dfrac{e^{2\pi ihf(1)}}{\Delta f(1)} \right |$$ \medskip
$$\leq \sum_{n=1} ^{N-1}\left |2\pi ie^{2\pi ihf(n)}- \dfrac{e^{2\pi ihf(n+1)}}{\Delta f(n+1)}+\dfrac{e^{2\pi ihf(n)}}{\Delta f(n)} \right | + \left | \dfrac{e^{2\pi ihf(N)}}{\Delta f(N)} \right |+ \left | \dfrac{e^{2\pi ihf(1)}}{\Delta f(1)} \right |$$ \medskip
$$\leq \sum_{n=1} ^{N-1}\left |\dfrac{1}{\Delta f(n+1)}-\dfrac{1}{\Delta f(n)} \right |+ \sum_{n=1} ^{N-1} 2 \pi^{2}h^{2}|\Delta f(n)|+ \frac{1}{|\Delta f(N)|}+\dfrac{1}{|\Delta f(1)|} \ . \medskip$$
Da ci\`{o}, sfruttando l'ipotesi di monotonia si ottiene \medskip
\begin{equation*}
\left |\sum_{n=1} ^{N-1}e^{2\pi ihf(n)}\right | \leq \frac{1}{2 \pi |h|}\left( \frac{1}{|\Delta f(N)|}+\dfrac{1}{|\Delta f(1)|} \right)+ \frac{1}{2 \pi |h|}\left | \frac{1}{|\Delta f(N)|}+\dfrac{1}{|\Delta f(1)|} \right | +
\end{equation*}
\begin{equation*}
+\pi |h|\sum_{n=1} ^{N-1}|\Delta f(n)| \leq \frac{1}{\pi |h|}\left( \frac{1}{|\Delta f(N)|}+\dfrac{1}{|\Delta f(1)|} \right)+\pi |h|\sum_{n=1} ^{N-1}|\Delta f(n)|\ .
\medskip 
\end{equation*}
Allora
\begin{equation*}
\left | \frac{1}{N} \sum_{n=1} ^{N} e^{2\pi ihf(n)} \right | \leq \frac{1}{\pi |h|}\left( \frac{1}{N|\Delta f(N)|}+\dfrac{1}{N|\Delta f(1)|} \right)+\frac{\pi |h|}{N}\sum_{n=1} ^{N-1}|\Delta f(n)|\ .
\end{equation*}
Tenendo conto delle ipotesi (\ref{eq:1.6}), dal Teorema \ref{thm:1.2.1} segue la tesi.
\end{proof}
Conseguenza immediata del teorema precedente (e, quindi, del criterio di Weyl) \`{e} il noto
\begin{thm}[di Fej\'{e}r]
\label{thm:1.2.4}
Sia $f$ una funzione definita e differenziabile per $x \geq 1$. Se $f'$ \`{e} monotona per ogni $x\geq 1$ ed inoltre
\begin{equation}
\lim_{x \to +\infty}f'(x)=0  \label{eq:1.7} \\
\end{equation}
e
\begin{equation}
\lim_{x \to +\infty}x|f'(x)|=+\infty \ ,\label{eq:1.8}
\end{equation}
allora la successione $(f(n))_{n \in \mathbb{N}}$ \`{e} u.d. mod 1.
\end{thm}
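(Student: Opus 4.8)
L'approccio che adotterei \`{e} di ricondurre l'enunciato al Teorema \ref{thm:1.2.3}, applicandolo direttamente alla successione $(f(n))_{n\in\mathbb{N}}$: baster\`{a} allora verificare che, sotto le ipotesi del teorema, la successione delle differenze $\Delta f(n)=f(n+1)-f(n)$ \`{e} monotona e soddisfa le due condizioni (\ref{eq:1.6}). Lo strumento chiave \`{e} il teorema del valor medio: poich\'{e} $f$ \`{e} differenziabile per $x\geq 1$, per ogni $n\geq 1$ esiste $\xi_n\in\,]n,n+1[$ tale che $\Delta f(n)=f'(\xi_n)$, o equivalentemente $\Delta f(n)=\int_n^{n+1}f'(t)\,dt$.

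Per prima cosa dedurrei la monotonia di $(\Delta f(n))$ da quella di $f'$. Scrivendo $\Delta f(n)=\int_0^1 f'(n+s)\,ds$, se ad esempio $f'$ \`{e} non crescente si ha $f'(n+s)\geq f'\big((n+1)+s\big)$ per ogni $s\in[0,1]$, e integrando rispetto a $s$ segue $\Delta f(n)\geq \Delta f(n+1)$; in modo del tutto analogo, se $f'$ \`{e} non decrescente lo \`{e} anche $(\Delta f(n))$. In ogni caso $(\Delta f(n))$ \`{e} monotona.

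Poi verificherei le condizioni (\ref{eq:1.6}). Poich\'{e} $\xi_n\to+\infty$ per $n\to+\infty$, dalla (\ref{eq:1.7}) si ottiene subito $\Delta f(n)=f'(\xi_n)\to 0$. Per la seconda, da $n<\xi_n<n+1$ ricavo $n>\xi_n-1\geq \tfrac12\,\xi_n$ non appena $\xi_n\geq 2$ (cio\`{e} per $n\geq 2$), da cui
\[
n\,|\Delta f(n)| \;=\; n\,|f'(\xi_n)| \;\geq\; \tfrac12\,\xi_n\,|f'(\xi_n)| \;\longrightarrow\; +\infty
\]
per $n\to+\infty$, in virt\`{u} di (\ref{eq:1.8}). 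Osservo per inciso che (\ref{eq:1.8}) impedisce comunque a $\Delta f(n)$ di annullarsi per $n$ grande, sicch\'{e} la divisione per $|\Delta f(n)|$ effettuata nella dimostrazione del Teorema \ref{thm:1.2.3} \`{e} lecita.

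Verificate in questo modo tutte le ipotesi del Teorema \ref{thm:1.2.3}, la tesi \`{e} immediata. L'unico punto che richiede un minimo di cura --- e che indicherei come il vero passaggio delicato --- \`{e} il trasferimento delle informazioni asintotiche da $f'(x)$, funzione della variabile continua, alle differenze discrete $\Delta f(n)$: per questo occorre controllare dove si colloca il punto $\xi_n$ restituito dal teorema del valor medio, cosa che \`{e} comunque immediata dato che $\xi_n\in\,]n,n+1[$. Il resto \`{e} puramente formale.
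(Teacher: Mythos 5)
La tua dimostrazione \`{e} corretta e segue essenzialmente la stessa strada del testo: ridurre l'enunciato al Teorema \ref{thm:1.2.3} tramite il teorema del valor medio, verificando monotonia e le due condizioni (\ref{eq:1.6}) per $\Delta f(n)=f'(\xi_n)$. Le uniche differenze sono di dettaglio: per la seconda condizione il testo usa l'incastro $nf'(n+1)\leq nf'(\tau_n)\leq nf'(n)$ distinguendo i casi $f'$ crescente/decrescente, mentre tu usi la minorazione $n\geq\tfrac12\xi_n$, che evita la distinzione dei casi ed \`{e} altrettanto valida.
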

\begin{proof}
Per il teorema del valor medio esiste $\tau_n\in [n,n+1]$ tale che
\begin{equation*}
\Delta f(n)=f'(\tau_n)\ .
\end{equation*}
Pertanto, $(\Delta f(n))$ \`{e} monotona e soddisfa $\lim_{n \to \infty}\Delta f(n)=0$ in virt\`{u} dell'ipotesi (\ref{eq:1.7}).\\
Quindi, per poter applicare il Teorema \ref{thm:1.2.3}, rimane da verificare che $\Delta f(n)$ soddisfa la seconda in (\ref{eq:1.6}).
Supponiamo dapprima che la $f'$ sia decrescente. Allora, per $n$ sufficientemente grande risulta che
\begin{equation*}
(n+1)f'(n+1)-f'(n+1)=nf'(n+1) \leq n f'(\tau_n) \leq nf'(n).
\end{equation*}
Passando al limite e tenendo conto delle ipotesi (\ref{eq:1.7}) e (\ref{eq:1.8}), otteniamo che $\lim_{n \to \infty} n|\Delta f(n)|=+\infty$.\\ Se, invece, $f'$ \`{e} crescente, basta scambiare $f$ con $-f$.\\In entrambi i casi possiamo applicare il teorema precedente ed ottenere la tesi.
\end{proof}
\begin{es}
\rm{Come abbiamo gi\`{a} osservato, il teorema di Fej\'{e}r fornisce un criterio sufficiente per stabilire quando una successione di punti  \`{e} u.d. mod 1. Vediamone alcuni esempi proposti in \cite{Kuipers_Niederreiter}.\bigskip \\
$\mathbf{1.}$ $(\alpha n^{\sigma} \log^{\tau}n)_{n \geq 2}$, con $\alpha \neq 0$, $\tau\in\mathbb{R}$, $0<\sigma<1$. \\Vogliamo provare che la funzione $f(x)=\alpha x^{\sigma} \log^{\tau}x$, con $x\geq 2$, associata alla successione di termine generale $f(n)=\alpha n^{\sigma} \log^{\tau}n$, verifica le ipotesi del teorema di Fej\'{e}r. \\ Osserviamo innanzitutto che si tratta di una funzione monotona. Infatti, si pu\`{o} provare che la derivata prima
\begin{equation*}
f'(x)= \alpha \sigma  x^{\sigma -1} \log^{\tau}x+\frac{\alpha x^{\sigma}\tau \log^{\tau -1}x}{x}=\alpha x^{\sigma -1} \log^{\tau -1}x(\sigma\log x+\tau)
\end{equation*}
\`{e} sempre monotona. Inoltre, per $x\to +\infty$ questa derivata tende a zero e
\begin{equation*}
x |f'(x)|= |\alpha x^{\sigma} \log^{\tau -1}x(\sigma\log x+\tau)| \rightarrow +\infty \ .
\end{equation*}
Pertanto, poich\`{e} la $f$ verifica le condizioni (\ref{eq:1.7}) e (\ref{eq:1.8}), possiamo applicare il teorema di Fej\'{e}r e dedurre che la successione \`{e} u.d. mod 1.
\bigskip \\
$\mathbf{2.}$ $(\alpha \log^{\tau}n)_{n\geq 1}$, con $\alpha \neq 0$ e $\tau >1$.\\ Indichiamo con $g(n)$ il termine generale della successione e poniamo $g(x)=\alpha\log^{\tau}x$ per $x\geq 1$. Allora
\begin{equation*}
g'(x)=\frac{ \alpha \tau \log^{\tau -1}x}{x}\longrightarrow0 \qquad (x \to +\infty)
\end{equation*}
e inoltre si vede che $g'$ \`{e} monotona e
\begin{equation*}
x|g'(x)|= |\alpha| \tau \log^{\tau -1}x\longrightarrow +\infty \qquad (x \to +\infty)\ .
\end{equation*}
Per il Teorema \ref{thm:1.2.4}, la successione $(\alpha \log^{\tau}n)_{n\geq 1}$ \`{e} u.d. mod 1.
\bigskip \\
$\mathbf{3.}$ $(\alpha n \log^{\tau}n)_{n\geq 2}$, con $\alpha \neq 0$ e $\tau <0$. \\ Anche in questo caso possiamo verificare che sono soddisfatte le ipotesi del teorema di  Fej\'{e}r. Infatti, indicato con $h(n)$ il termine generale della successione e posto $h(x)=\alpha x\log^{\tau}x$ per ogni $x\geq 2$, \`{e} facile osservare che
\begin{equation*}
h'(x)=\alpha \log^{\tau}x + \frac{ \alpha \tau x \log^{\tau -1}x}{x}=\alpha\log^{\tau-1}x(\log x+\tau)\longrightarrow 0\quad (x \to +\infty)\ ,
\end{equation*}
che $h'$ \`{e} monotona e che 
\begin{equation*}
x|h'(x)|=|\alpha \log^{\tau-1}x (\log x + \tau)| \longrightarrow +\infty \qquad (x \to +\infty)\ .
\end{equation*}
Quindi la successione data risulta u.d. mod 1.
}
\end{es}
\section{Il caso multidimensionale}
Generalizziamo quanto detto finora al caso di successioni di vettori, partendo da alcune notazioni. \\ Sia $s$ un intero, $s\geq 2$. Siano $\textbf{a}=(a_1,\ldots ,a_s)$ e $\textbf{b}=(b_1,\ldots ,b_s)$ due vettori di $\mathbb{R}^{s}$. Diciamo che $\textbf{a} < \textbf{b}$ (rispettivamente, $\textbf{a} \leq \textbf{b}$) se $a_i < b_i$ (rispettivamente, $a_i\leq b_i$) per ogni $i=1,\ldots ,s$. L'insieme dei punti $\textbf{x}\in \mathbb{R}^{s}$ tali che $\textbf{a}\leq  \textbf{x} <\textbf{b}$ (rispettivamente, $\textbf{a}\leq \textbf{x}\leq \textbf{b}$) sar\`{a} denotato da $[\textbf{a},\textbf{b}[$ (rispettivamente, $[\textbf{a},\textbf{b}]$) e verr\`{a} chiamato sottointervallo. \\Il cubo unitario $s$-dimensionale $I^{s}$ \`{e} l'intervallo $[\textbf{0},\textbf{1}[$, dove $\textbf{0}=(0,\ldots, 0)$ e $\textbf{1}=(1,\ldots,1)$. \\ La parte intera di $\textbf{x}=(x_1,\ldots,x_s)$ \`{e} $\lfloor\textbf{x}\rfloor=(\lfloor x_1\rfloor,\ldots,\lfloor x_s\rfloor)$ e la parte frazionaria di $\textbf{x}$ \`{e} $\{\textbf{x}\}=(\{x_1\},\ldots,\{x_s\})$.\\ Sia $(\textbf{x}_n)$, $n=1,\ldots,$ una successione di vettori in $\mathbb{R}^{s}$.
\begin{defn}
\rm{La successione $(\textbf{x}_n)_{n\in\mathbb{N}}$ \`{e} detta \emph{uniformemente distribuita modulo 1} ($u.d.\ mod\ \mathit{1}$) in $\mathbb{R}^{s}$ se
\begin{equation}
\lim_{N \to \infty}\frac{1}{N} \sum_{n=1} ^{N} \chi_{[\textbf{a}, \textbf{b}[}(\{\textbf{x}_n\})=\prod_{j=1}^{s}(b_j-a_j)
\end{equation}
per tutti i sottointervalli $[\textbf{a},\textbf{b}[\, \subset I^{s}$.}
\end{defn}
Considerare il caso multidimensionale ci permette di fornire una definizione di uniforme distribuzione per successioni di numeri complessi.
\begin{defn}
\rm{Sia $(z_n)_{n\in \mathbb{N}}$ una successione di numeri complessi. Siano Re $z_n=x_n$ e Im $z_n=y_n$. Allora la successione $(z_n)$ \`{e} detta \emph{uniformemente distribuita modulo 1} ($u.d.\ mod\ \mathit{1}$) in $\mathbb{C}$ se la successione $((x_n, y_n))_{n\in\mathbb{N}}$ \`{e} u.d. mod 1 in $\mathbb{R}^{2}$.}
\end{defn}
Ricordando che il cubo unitario $s$-dimensionale chiuso $\bar{I}^{s}$ \`{e} l'intervallo $[\textbf{0}, \textbf{1}]$, possiamo enunciare il seguente:
\begin{thm}
Una successione $(\textbf{x}_n)_{n\in\mathbb{N}}$ \`{e} u.d. mod 1 in $\mathbb{R}^{s}$ se e solo se per ogni funzione $f$ continua a valori complessi definita su $\bar{I}^{s}$ vale la seguente relazione:
\begin{equation*}
\lim_{N \to \infty}\frac{1}{N} \sum_{n=1} ^{N} f(\{\textbf{x}_n\})=\int_{\bar{I}^{s}}f(\textbf{x})dx \ .
\end{equation*}
\smallskip
\end{thm}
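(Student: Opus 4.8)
The plan is to mirror, almost line for line, the proof of Theorem~\ref{thm:1.1.3}, replacing the unit interval $\bar I$ by the unit cube $\bar I^{s}$, and to add at the end the reduction from complex- to real-valued functions exactly as in the proof of Corollary~\ref{cor:1.1.5}.

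For the \emph{necessity}, assume $(\mathbf x_n)$ is u.d.\ mod 1 and establish the integral relation for real continuous $f$ in three steps. Step one: if $J=[\mathbf a,\mathbf b[\,\subset\bar I^{s}$ is a subinterval, then $\frac1N\sum_{n=1}^{N}\chi_{J}(\{\mathbf x_n\})\to\prod_{j=1}^{s}(b_j-a_j)=\int_{\bar I^{s}}\chi_{J}$, which is precisely the defining property of uniform distribution modulo 1 in $\mathbb{R}^{s}$; since the definition already quantifies over all subintervals, no auxiliary reduction is needed at this stage. Step two: subdivide $\bar I^{s}$ by a rectangular grid into finitely many subintervals $J_1,\dots,J_m$; by linearity the integral relation then holds for every real step function $f=\sum_{i=1}^{m}c_i\chi_{J_i}$, word for word as in the first part of the proof of Theorem~\ref{thm:1.1.3}. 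Step three: a real continuous $f$ on the compact cube $\bar I^{s}$ is uniformly continuous, hence Riemann integrable, so for each $\varepsilon>0$ a sufficiently fine grid produces lower and upper step functions $f_1\le f\le f_2$ with $\int_{\bar I^{s}}(f_2-f_1)<\varepsilon$; sandwiching the Cesàro averages of $f(\{\mathbf x_n\})$ between those of $f_1(\{\mathbf x_n\})$ and $f_2(\{\mathbf x_n\})$, invoking step two for the latter two, and letting $\varepsilon\to0$, yields the relation for $f$.

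For the \emph{sufficiency}, assume the integral relation for all real continuous $f$ and fix an arbitrary box $[\mathbf a,\mathbf b[\,\subset I^{s}$. The single new construction is a pair of continuous functions $g_1\le\chi_{[\mathbf a,\mathbf b[}\le g_2$ on $\bar I^{s}$ with $\int_{\bar I^{s}}(g_2-g_1)<\varepsilon$: take $g_\ell(\mathbf x)=\prod_{i=1}^{s}g_\ell^{(i)}(x_i)$, a product of one-dimensional piecewise-linear ``trapezoidal'' functions of the same kind used in the proof of Theorem~\ref{thm:1.1.3}, each equal to the indicator of $[a_i,b_i[$ off a collar of width $\delta$ on which it interpolates linearly; since all factors lie in $[0,1]$ the product inequalities hold, and the difference of the integrals is $O(\delta)$, hence $<\varepsilon$ for $\delta$ small. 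Then the identical chain of inequalities from Theorem~\ref{thm:1.1.3} forces $\frac1N\sum_{n=1}^{N}\chi_{[\mathbf a,\mathbf b[}(\{\mathbf x_n\})\to\prod_{j=1}^{s}(b_j-a_j)$, i.e.\ $(\mathbf x_n)$ is u.d.\ mod 1.

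Finally, for complex-valued $f$ write $f=u+iv$ with $u=\mathrm{Re}\,f$ and $v=\mathrm{Im}\,f$ real and continuous on $\bar I^{s}$, apply the two cases above to $u$ and to $v$, and add the results, using that both $\frac1N\sum f(\{\mathbf x_n\})$ and $\int_{\bar I^{s}}f$ are $\mathbb{C}$-linear. The only point that genuinely differs from dimension one is the bookkeeping: realizing the grid step functions and the approximating bumps as products of one-dimensional objects and noting that continuous functions on $\bar I^{s}$ are Riemann integrable because they are uniformly continuous. I expect this (mild) combinatorial overhead, rather than any analytic difficulty, to be the main obstacle, since the approximation and sandwiching arguments are verbatim those of the one-dimensional theorems.
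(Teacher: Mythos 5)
Your proposal is correct, and it is precisely the routine adaptation of the paper's one-dimensional arguments (Teorema \ref{thm:1.1.3} for the real case, plus the real/imaginary splitting of Corollario \ref{cor:1.1.5}) that the paper implicitly relies on: the theorem is stated there without proof. The only genuinely new ingredient you supply --- sandwiching $\chi_{[\mathbf a,\mathbf b[}$ between products of one-dimensional trapezoidal functions, whose integral gap is $O(\delta)$ --- is sound, so nothing further is needed.
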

Infine, se per ogni $\textbf{x}=(x_1,\ldots, x_s)$ e $\textbf{y}=(y_1,\ldots, y_s)$ in $\mathbb{R}^{s}$ denotiamo con $\langle \textbf{x}, \textbf{y} \rangle$ il prodotto scalare standard in $\mathbb{R}^{s}$, ossia $\langle \textbf{x}, \textbf{y} \rangle =x_1y_1+x_2y_2+\ldots +x_sy_s$, possiamo scrivere il criterio di Weyl nel caso mutidimensionale, omettendone la dimostrazione che \`{e} essenzialmente simile a quella del caso $s=1$. 
\begin{thm}[Criterio di Weyl]
Una successione $(\textbf{x}_n)_{n\in\mathbb{N}}$ \`{e} u.d. mod 1 in $\mathbb{R}^{s}$ se e solo se per ogni $\textbf{h}\in \mathbb{Z}^{s}$, $\textbf{h}\neq$ \rm{\textbf{0}}, risulta
\begin{equation*}
\lim_{N \to \infty}\frac{1}{N} \sum_{n=1} ^{N}e^{2\pi i \langle \textbf{h}, \textbf{x}_n \rangle}=0\ .
\end{equation*}
\end{thm}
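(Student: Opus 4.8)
The plan is to transcribe the one-dimensional argument (Theorem \ref{thm:1.2.1}) almost verbatim, replacing the scalar objects appearing there by their $s$-dimensional analogues and invoking the multidimensional tools already assembled in this section.

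\emph{Necessity.} Suppose $(\textbf{x}_n)$ is u.d. mod 1 in $\mathbb{R}^s$ and fix $\textbf{h}\in\mathbb{Z}^s$ with $\textbf{h}\neq\textbf{0}$. Apply the preceding theorem (the $s$-dimensional version of Theorem \ref{thm:1.1.3}) to the continuous complex-valued function $f(\textbf{x})=e^{2\pi i\langle\textbf{h},\textbf{x}\rangle}$, which is $1$-periodic in each coordinate, so that $f(\{\textbf{x}_n\})=f(\textbf{x}_n)$. Since the exponential factorizes, $\int_{\bar I^s}e^{2\pi i\langle\textbf{h},\textbf{x}\rangle}\,d\textbf{x}=\prod_{j=1}^s\int_0^1 e^{2\pi i h_j x_j}\,dx_j$; as $\textbf{h}\neq\textbf{0}$ at least one $h_j$ is nonzero, and the corresponding factor, hence the whole product, vanishes, which is exactly the Weyl condition.

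\emph{Sufficiency.} Assume the Weyl sums vanish for every nonzero $\textbf{h}$ and run the $\varepsilon/3$ scheme. The one external ingredient is the multidimensional Weierstrass approximation theorem: every continuous $f:\mathbb{R}^s\to\mathbb{C}$ that is $1$-periodic in each variable is a uniform limit of finite trigonometric polynomials $\psi(\textbf{x})=\sum_{\textbf{h}}c_{\textbf{h}}\,e^{2\pi i\langle\textbf{h},\textbf{x}\rangle}$. Given such an $f$ and $\varepsilon>0$, choose $\psi$ with $\sup_{\textbf{x}}|f(\textbf{x})-\psi(\textbf{x})|<\varepsilon/3$ and split $\bigl|\int_{\bar I^s}f\,d\textbf{x}-\tfrac1N\sum_{n=1}^N f(\textbf{x}_n)\bigr|$ into the approximation error on the integral side, the approximation error on the average side, and the term $\bigl|\int_{\bar I^s}\psi\,d\textbf{x}-\tfrac1N\sum_{n=1}^N\psi(\textbf{x}_n)\bigr|$; the latter is a finite linear combination of Weyl sums in which the $\textbf{h}=\textbf{0}$ contribution equals $c_{\textbf{0}}=\int_{\bar I^s}\psi\,d\textbf{x}$ exactly, while every $\textbf{h}\neq\textbf{0}$ contribution tends to $0$ by hypothesis, so it is $<\varepsilon/3$ for $N$ large. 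Letting $\varepsilon\to0$ yields $\tfrac1N\sum_{n=1}^N f(\textbf{x}_n)\to\int_{\bar I^s}f\,d\textbf{x}$ for every periodic continuous $f$. Then I would pass from periodic continuous test functions to box indicators exactly as in the proof of Theorem \ref{thm:1.1.3}: given $[\textbf{a},\textbf{b}[\,\subset I^s$ and $\varepsilon>0$, sandwich $\chi_{[\textbf{a},\textbf{b}[}$ between continuous functions $g_1\leq\chi_{[\textbf{a},\textbf{b}[}\leq g_2$ on $\bar I^s$ with $\int_{\bar I^s}(g_2-g_1)\,d\textbf{x}<\varepsilon$ and with matching values on opposite faces of the cube (concretely, products over $j$ of the one-dimensional trapezoidal functions used before, so that both extend $1$-periodically to $\mathbb{R}^s$), then apply the limit just proved to $g_1$ and $g_2$ and squeeze to get $\lim_N\tfrac1N\sum_{n=1}^N\chi_{[\textbf{a},\textbf{b}[}(\{\textbf{x}_n\})=\prod_{j=1}^s(b_j-a_j)$, i.e.\ u.d. mod 1.

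I expect the only real obstacle to be the multidimensional Weierstrass step, namely establishing that trigonometric polynomials in $s$ variables are dense in the space of $1$-periodic continuous functions on the torus $(\mathbb{R}/\mathbb{Z})^s$; this follows either from the Stone--Weierstrass theorem applied on the torus or by iterating the one-variable Fej\'{e}r-kernel approximation one coordinate at a time. Once this is available, everything else is a routine coordinatewise rewriting of the $s=1$ proof, with products of one-dimensional objects replacing their scalar counterparts.
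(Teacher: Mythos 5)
Your proposal is correct and is exactly the argument the paper has in mind: the text explicitly omits the proof of the multidimensional Weyl criterion, stating that it is ``essenzialmente simile a quella del caso $s=1$'', and your coordinatewise transcription (factorizing the integral of $e^{2\pi i\langle\textbf{h},\textbf{x}\rangle}$ for necessity, Stone--Weierstrass on the torus plus the $\varepsilon/3$ scheme and the sandwich by periodic continuous functions for sufficiency) is precisely that adaptation. No gaps.
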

\section{Discrepanza}
Lo scopo di questo paragrafo \`{e} quello di presentare i concetti fin qui esposti da un punto di vista quantitativo, e non solo puramente qualitativo come fatto sinora. Per farlo introduciamo il concetto di \emph{discrepanza}, termine probabilmente coniato da J. C. van der Corput, matematico olandese che per primo, nel 1935, pubblic\`{o} in \cite{vanderCorput} una procedura che consente di generare successioni u.d. mod 1 e che inoltre stabil\`{i} risultati quantitativi riguardanti il comportamento nella distribuzione di una successione. \\ Questa quantit\`{a} ci permette di distinguere tra \lq \lq buone\rq\rq e \lq\lq cattive\rq\rq successioni u.d.. Infatti, tra le successioni u.d. mod 1 ve ne sono alcune che presentano una buona distribuzione, al contrario di altre che sono soltanto uniformemente distribuite.
\begin{defn}
\label{def:1.4.1}
\rm{Sia $x_1,\ldots, x_N$ una successione finita di numeri reali. La quantit\`{a}
\begin{equation}
D_N=D_N(x_1,\ldots, x_N)=\sup_{0\leq a < b\leq 1}\left|\frac{1}{N} \sum_{n=1} ^{N} \chi_{[a, b[}(\{x_n\})-(b-a) \right| 
\end{equation}
\`{e} detta \emph{discrepanza} della successione.}\\ Per una successione infinita $w$ di numeri reali (o per una successione di numeri reali finita contenente almeno $N$ termini), la discrepanza $D_N(w)$ si definisce come la discrepanza del segmento iniziale formato dai primi $N$ termini di $w$.
\end{defn}
\begin{thm}
\label{thm:1.4.2}
La successione $w$ \`{e} u.d. mod 1 se e solo se 
\begin{equation}
\lim_{N\to \infty}D_N(w)=0 \ . \label{eq:1.10}
\end{equation}
\end{thm}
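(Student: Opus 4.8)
The plan is to prove the two implications of Theorem~\ref{thm:1.4.2} separately, the converse being trivial and the direct one requiring a uniformization. If $\lim_{N\to\infty}D_N(w)=0$, then for an arbitrary subinterval $[a,b[\,\subseteq I$ the definition of $D_N$ immediately gives
$$\left|\frac1N\sum_{n=1}^N\chi_{[a,b[}(\{x_n\})-(b-a)\right|\le D_N(w),$$
and since the right-hand side tends to $0$, relation (\ref{eq:1.1}) holds for every $[a,b[$ and $w$ is u.d. mod $1$.

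For the converse, suppose $w$ is u.d. mod $1$. The difficulty here --- and the main obstacle of the proof --- is that u.d. mod $1$ supplies, for each \emph{single} interval, the convergence of the relative counting frequency to the length, whereas $D_N(w)$ is a \emph{supremum} over the uncountable family of all subintervals of $I$; one must therefore upgrade pointwise convergence to a uniform bound. I would do this by a grid/sandwiching argument. Fix $\varepsilon>0$ and an integer $m$ with $1/m<\varepsilon$, put $c_k=k/m$ for $k=0,\dots,m$, and write $A(J;N)=\sum_{n=1}^N\chi_J(\{x_n\})$. Given $[a,b[\,\subseteq I$, let $c_p$ be the smallest grid point $\ge a$ and $c_q$ the largest grid point $\le b$ (the boundary cases $a=0$, $b=1$ being handled directly). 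Then $c_p-a<1/m$ and $b-c_q<1/m$, one has $[c_p,c_q[\,\subseteq\,[a,b[\,\subseteq\,[c_{p-1},c_{q+1}[$ (with the inner interval empty if $q<p$), and hence, by monotonicity of $A(\cdot;N)$ under inclusion,
$$\frac1N A([c_p,c_q[;N)\ \le\ \frac1N A([a,b[;N)\ \le\ \frac1N A([c_{p-1},c_{q+1}[;N).$$

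Now there are only finitely many grid intervals $[c_r,c_s[$ with $0\le r\le s\le m$, and, $w$ being u.d. mod $1$, relation (\ref{eq:1.1}) applies to each of them. Choosing $N_0$ so large that $|\frac1N A([c_r,c_s[;N)-(c_s-c_r)|<\varepsilon$ for all such pairs and all $N\ge N_0$, and combining this with the displayed chain of inequalities and with the elementary bounds $c_{q+1}-c_{p-1}<(b-a)+2/m$ and $c_q-c_p>(b-a)-2/m$, I obtain
$$\left|\frac1N A([a,b[;N)-(b-a)\right|<3\varepsilon\qquad\text{for all }N\ge N_0,$$
uniformly in $a$ and $b$. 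Taking the supremum over all $[a,b[\,\subseteq I$ yields $D_N(w)\le 3\varepsilon$ for $N\ge N_0$, and since $\varepsilon>0$ was arbitrary, $\lim_{N\to\infty}D_N(w)=0$. Beyond the passage from ``for each interval'' to ``uniformly over all intervals'', which the sandwiching between finitely many grid intervals settles, the argument is only bookkeeping with the monotone counting function $A(\cdot;N)$.
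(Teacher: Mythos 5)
Your proof is correct and follows essentially the same route as the paper: the easy direction from the definition of $D_N$, and for the converse a sandwich of an arbitrary interval between inner and outer intervals with endpoints on the finite grid $\{k/m\}$, using the u.d.\ hypothesis on the finitely many grid intervals to obtain a uniform $N_0$. The only cosmetic difference is that the paper applies the hypothesis to the $m$ elementary cells $I_k=[k/m,(k+1)/m[$ and propagates multiplicative errors $(1\pm 1/m)$, whereas you apply it directly to the grid intervals with an additive $\varepsilon$; the resulting bound ($3/m+2/m^2$ versus $3\varepsilon$) and the logic are the same.
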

\begin{proof}
Osserviamo che, per la Definizione \ref{def:1.4.1}, la (\ref{eq:1.10}) implica direttamente la (\ref{eq:1.1}), ossia la definizione di successione u.d. mod 1, per tutti gli intervalli $[a,b[\ \subset I$. Quindi si tratta solo di dimostrare che ogni successione u.d. mod 1 soddisfa la (\ref{eq:1.10}). Per far ci\`{o}, scegliamo un intero positivo $m$, con $m\geq 2$. \\ Per $0\leq k\leq m-1$ denotiamo con $I_k$ l'intervallo $I_k=\left[ \frac{k}{m}, \frac{k+1}{m}\right[$.\\ Dalla Definizione \ref{def:1.1.1} sappiamo che se una successione \`{e} u.d. mod 1 allora esiste $N_0=N_0(m)$ tale che per ogni $N\geq N_0$ si ha 
\begin{equation*}
\frac{1}{m}\left(1-\frac{1}{m}\right)\leq \frac{1}{N} \sum_{n=1}^{N} \chi_{I_k}(\{x_n\})\leq \frac{1}{m}\left(1+\frac{1}{m}\right) \ .
\end{equation*}
Consideriamo, ora, un arbitrario intervallo $J=[a, b[\ \subset I$. Chiaramente esistono due intervalli $J_1,J_2$, unioni finite di intervalli $I_k$, tali che $J_1\subset J\subset J_2$ e per cui sono verificate le seguenti relazioni:
\begin{equation}
\lambda (J) - \lambda(J_1)< \frac{2}{m}\label{eq:1.11}
\end{equation}
e
\begin{equation}\label{eq:1.12}
\lambda(J_2)-\lambda(J)<\frac{2}{m}\ ,
\end{equation}
dove con $\lambda$ indichiamo la misura di Lebesgue unidimensionale. \\Allora, per ogni $N>N_0$ risulta
\begin{eqnarray*}
\lambda (J_1)\left( 1-\frac{1}{m} \right) &\leq &\frac{1}{N}\sum_{n=1}^{N} \chi_{J_1}(\{x_n\})\leq \frac{1}{N}\sum_{n=1}^{N} \chi_{J}(\{x_n\})\\
&\leq & \frac{1}{N}\sum_{n=1}^{N} \chi_{J_2}(\{x_n\}) \leq \lambda(J_2)\left(1+\frac{1}{m}\right)\ .
\end{eqnarray*}
Da questa catena di disuguaglianze e dalle relazioni (\ref{eq:1.11}) e (\ref{eq:1.12}) si ottiene che
\begin{equation*}
\left(\lambda (J)-\frac{2}{m}\right)\left(1-\frac{1}{m}\right)<\frac{1}{N}\sum_{n=1}^{N} \chi_{J}(\{x_n\})<\left(\lambda(J)+\frac{2}{m}\right)\left(1+\frac{1}{m}\right)\ .
\end{equation*}
Infine, poich\`{e} $J\subset I$, dalla propriet\`{a} di monotonia della misura di Lebesgue segue che $\lambda(J)\leq 1$ e quindi
\begin{equation*}
-\frac{3}{m}-\frac{2}{m^{2}}<\frac{1}{N}\sum_{n=1}^{N} \chi_{J}(\{x_n\})- \lambda(J)<\frac{3}{m}+\frac{2}{m^{2}} \qquad \forall N\geq N_0\ ,
\end{equation*}
che in forma equivalente pu\`{o} essere scritta come
\begin{equation*}
\left| \frac{1}{N}\sum_{n=1}^{N} \chi_{J}(\{x_n\})- \lambda(J) \right|<\frac{3}{m}+\frac{2}{m^{2}}\ .
\end{equation*}
Poich\`{e} la stima precedente non dipende da $J$, possiamo considerare l'estremo superiore di ambo i membri su ogni possibile intervallo $J\subset I$ ed ottenere cos\`{i}
\begin{equation*}
D_N(\omega)< \frac{3}{m}+\frac{2}{m^{2}}\ .
\end{equation*}
Dal momento che la quantit\`{a} $\frac{3}{m}+\frac{2}{m^{2}}$ pu\`{o} essere resa arbitrariamente piccola, il teorema \`{e} completamente dimostrato.
\end{proof}
\begin{oss}
\rm{Vista l'arbitrariet\`{a} di $J$, il teorema precedente mette in luce un aspetto interessante sulle successioni u.d. mod 1, ossia che
\begin{equation*}
\lim_{N\to \infty}\frac{1}{N}\sum_{n=1}^{N} \chi_{J}(\{x_n\})= \lambda(J)
\end{equation*}
uniformemente rispetto a tutti i sottointervalli $J=[a, b[$ di $I$.}
\end{oss}
Dimostriamo, ora, un ulteriore teorema che mostra come $D_N$ non possa convergere a zero troppo rapidamente.
\begin{thm}
\label{thm:1.4.4}
Per ogni successione di finita $x_1,\ldots,x_N$ di numeri reali risulta
\begin{equation*}
\frac{1}{N}\leq D_N(x_1,\ldots,x_N) \leq 1\ .
\end{equation*}
\end{thm}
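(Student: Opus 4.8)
The plan is to prove the two inequalities separately, both by elementary arguments. For the \emph{upper bound}, note that for any subinterval $[a,b[\,\subseteq I=[0,1[$, i.e.\ any pair $0\le a<b\le 1$, the counting average $\frac1N\sum_{n=1}^N\chi_{[a,b[}(\{x_n\})$ is the proportion of indices $n\le N$ with $\{x_n\}\in[a,b[$, hence a number in $[0,1]$; likewise $b-a\in[0,1]$. Therefore the difference $\frac1N\sum_{n=1}^N\chi_{[a,b[}(\{x_n\})-(b-a)$ lies in $[-1,1]$, so its absolute value is at most $1$, and taking the supremum over all such intervals gives $D_N\le 1$.

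For the \emph{lower bound} the idea is to exhibit a single interval witnessing a discrepancy term arbitrarily close to $1/N$. Fix one index, say $n=1$, and set $t=\{x_1\}\in[0,1[$. For $0<\varepsilon<\min\{1/N,\,1-t\}$ the interval $[t,t+\varepsilon[$ is a legitimate subinterval of $I$ and contains the point $\{x_1\}=t$, so $\frac1N\sum_{n=1}^N\chi_{[t,t+\varepsilon[}(\{x_n\})\ge 1/N$. Consequently the quantity inside the absolute value below is strictly positive, and
$$\left|\frac1N\sum_{n=1}^N\chi_{[t,t+\varepsilon[}(\{x_n\})-\varepsilon\right|\ \ge\ \frac1N-\varepsilon\ .$$
Hence $D_N\ge \frac1N-\varepsilon$ for every admissible $\varepsilon$, and letting $\varepsilon\to 0^+$ yields $D_N\ge \frac1N$.

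There is no serious obstacle in this argument; the only point deserving a word of care is that the shrinking interval $[t,t+\varepsilon[$ must remain a subinterval of $I$, i.e.\ $t+\varepsilon\le 1$, which is automatic since a fractional part is strictly less than $1$, leaving room $1-t>0$ to the right of $t$. One could equally well start the interval at the leftmost of the points $\{x_n\}$, but any single point of the sequence does the job.
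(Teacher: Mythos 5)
Your proof is correct and follows essentially the same route as the paper: the upper bound is immediate from the definition (both the empirical proportion and the length $b-a$ lie in $[0,1]$), and the lower bound is obtained by placing a short interval $[x,x+\varepsilon[$ at one of the points of the sequence, observing that it captures at least one of the $N$ points while having measure at most $\varepsilon$, and letting $\varepsilon\to 0^{+}$. The only cosmetic difference is that the paper takes $J=[x,x+\varepsilon[\,\cap\,I$ instead of restricting $\varepsilon<1-\{x_1\}$ as you do; both devices serve the same purpose of keeping the test interval inside $I$.
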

\begin{proof}
La disuguaglianza di destra segue in maniera evidente dalla definizione di discrepanza. Per dimostrare la prima disuguaglianza, fissiamo $\varepsilon >0$ e scegliamo uno degli elementi della successione, che chiamiamo $x$ per comodit\`{a}. Consideriamo l'intervallo $J=[x,x+\varepsilon[\ \cap\ I$. Poich\`{e} $x\in J$, si ha
\begin{equation*}
\frac{1}{N}\sum_{n=1}^{N} \chi_{J}(\{x_n\})- \lambda(J)\geq \frac{1}{N}- \lambda(J)\geq \frac{1}{N}-\varepsilon \ .
\end{equation*}
Dall'arbitrariet\`{a} di $\varepsilon$ segue la disuguaglianza di sinistra e ci\`{o} conclude la dimostrazione.
\end{proof}
A volte \`{e} utile restringere la famiglia di intervalli su cui calcoliamo l'estremo superiore nella definizione di discrepanza. Il tipo di restrizione pi\`{u} interessante consiste nel considerare intervalli della forma $[0,c[$, con $0< c\leq 1$. La Definizione \ref{def:1.4.1} viene modificata di conseguenza ed estesa nella maniera che segue.
\begin{defn}
\label{def:1.4.5}
\rm{Data una successione finita $x_1,x_2,\ldots, x_N$ di numeri reali, definiamo \emph{star-discrepanza} la quantit\`{a}
\begin{equation*}
D_N^{\ast}=D_N^{\ast}(x_1,\ldots, x_N)=\sup_{0<c\leq 1}\left| \frac{1}{N}\sum_{n=1}^{N} \chi_{[0,c[}(\{x_n\})-c\ \right| \ .
\end{equation*}}
\end{defn}
Analogamente a quanto succede per la discrepanza, la star-discrepanza $D_N^{\ast}(\omega)$ di una successione $\omega$ infinita (o di una successione finita di almeno $N$ termini) si definisce come la star-discrepanza dei primi $N$ termini di $\omega$.\medskip\\
Il prossimo risultato ci permette di stabilire una connessione tra $D_N$ e $D_N^{\ast}$.
\begin{thm}
\label{thm:1.4.6}
Tra le discrepanze $D_N$ e $D_N^{\ast}$ esiste la relazione
\begin{equation}
D_N^{\ast}\leq D_N\leq 2 D_N^{\ast} \ . \label{eq:1.13}
\end{equation} 
\end{thm}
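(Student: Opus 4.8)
The plan is to establish the two inequalities in (\ref{eq:1.13}) separately; both follow quickly from the definitions once we use the additivity of the counting functional.

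For the left-hand inequality $D_N^{\ast}\leq D_N$ I would simply note that every interval of the form $[0,c[$ with $0<c\leq 1$ is in particular a subinterval $[a,b[$ of $I$ (take $a=0$, $b=c$). Hence the supremum defining $D_N^{\ast}$ is taken over a subfamily of the intervals over which the supremum defining $D_N$ is taken, so $D_N^{\ast}\leq D_N$ is automatic.

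For the right-hand inequality it is convenient to abbreviate, for a subinterval $[a,b[\,\subseteq I$,
\[
A([a,b[)=\frac{1}{N}\sum_{n=1}^{N}\chi_{[a,b[}(\{x_n\}),
\qquad\text{so that}\qquad
D_N=\sup_{0\le a<b\le 1}\bigl|A([a,b[)-(b-a)\bigr|.
\]
Fix an arbitrary subinterval $[a,b[\,\subseteq I$, i.e. $0\le a<b\le 1$. From the pointwise identity $\chi_{[a,b[}=\chi_{[0,b[}-\chi_{[0,a[}$ on $\bar I$ (with the convention $\chi_{[0,0[}\equiv 0$) we get
\[
A([a,b[)-(b-a)=\bigl(A([0,b[)-b\bigr)-\bigl(A([0,a[)-a\bigr),
\]
and the triangle inequality yields
\[
\bigl|A([a,b[)-(b-a)\bigr|\le \bigl|A([0,b[)-b\bigr|+\bigl|A([0,a[)-a\bigr|.
\]
Here $0<b\le 1$, so the first summand is $\leq D_N^{\ast}$ by Definition \ref{def:1.4.5}; for the second, if $a=0$ the term is $0$, while if $a>0$ then $0<a<1$, so $[0,a[$ is admissible in Definition \ref{def:1.4.5} and the term is again $\leq D_N^{\ast}$. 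In either case the right-hand side is $\leq 2D_N^{\ast}$, a bound independent of the chosen interval. Taking the supremum over all $[a,b[\,\subseteq I$ gives $D_N\leq 2D_N^{\ast}$, completing the proof.

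I expect no serious obstacle here: the argument is essentially a two-line computation. The only point deserving a moment of care is the boundary case $a=0$, since the star-discrepancy ranges over $0<c\leq 1$ rather than $0\le c\le 1$; this is handled by observing that the corresponding term simply vanishes. The factor $2$ is exactly the price of writing a general interval as the difference of two intervals anchored at the origin.
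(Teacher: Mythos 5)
Your argument is correct and is essentially identical to the paper's proof: the first inequality by inclusion of the families of intervals, the second via the decomposition $\chi_{[a,b[}=\chi_{[0,b[}-\chi_{[0,a[}$, the triangle inequality, and passage to the supremum. Your extra remark on the boundary case $a=0$ is a sound (if minor) refinement that the paper leaves implicit.
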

\begin{proof}
La prima disuguaglianza segue direttamente dalle Definizioni \ref{def:1.4.1} e \ref{def:1.4.5}. \\ Per dimostrare la seconda, osserviamo che
\begin{equation*}
\sum_{n=1}^{N} \chi_{[a ,b[}(\{x_n\})=\sum_{n=1}^{N} \chi_{[0,b[}(\{x_n\})-\sum_{n=1}^{N} \chi_{[0,a[}(\{x_n\}) \qquad (0\leq a < b \leq 1)\ ,
\end{equation*}
da cui segue che
\begin{equation*}
\left| \frac{1}{N}\sum_{n=1}^{N} \chi_{[a ,b[}(\{x_n\})-(b -a )\right| \leq
\end{equation*}
\begin{equation*}
\left| \frac{1}{N}\sum_{n=1}^{N} \chi_{[0,b[}(\{x_n\})-b \right| + \left| \frac{1}{N}\sum_{n=1}^{N} \chi_{[0,a[}(\{x_n\})-a \right| \ .
\end{equation*}
Passando all'estremo superiore otteniamo la (\ref{eq:1.13}).
\end{proof}
\begin{cor}
\label{cor:1.4.7}
La successione $w$ \`{e} u.d. mod 1 se e solo se $$\lim_{N\to \infty}D_N^{\ast}(\omega)=0\ .$$
\end{cor}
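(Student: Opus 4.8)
Il piano è di ottenere l'enunciato come conseguenza diretta del Teorema \ref{thm:1.4.2} e del Teorema \ref{thm:1.4.6}, sfruttando la disuguaglianza a sandwich (\ref{eq:1.13}). Non servono stime nuove: tutto il lavoro sostanziale è già stato svolto nei due risultati precedenti, e qui si tratta soltanto di combinarli nel modo giusto per le due implicazioni.

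Per la prima implicazione supporrei $\omega$ u.d. mod $1$. Allora, per il Teorema \ref{thm:1.4.2}, si ha $\lim_{N\to\infty}D_N(\omega)=0$; poiché dalla (\ref{eq:1.13}) risulta $0\leq D_N^{\ast}(\omega)\leq D_N(\omega)$, per il teorema del confronto segue $\lim_{N\to\infty}D_N^{\ast}(\omega)=0$. Per l'implicazione inversa supporrei $\lim_{N\to\infty}D_N^{\ast}(\omega)=0$: sempre dalla (\ref{eq:1.13}) si ha $0\leq D_N(\omega)\leq 2D_N^{\ast}(\omega)$, dunque ancora per confronto $\lim_{N\to\infty}D_N(\omega)=0$, e il Teorema \ref{thm:1.4.2} permette di concludere che $\omega$ è u.d. mod $1$. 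In alternativa si potrebbe ripetere pari pari l'argomento del Teorema \ref{thm:1.4.2} usando solo intervalli della forma $[0,c[$ al posto dei generici $[a,b[$, ma la via tramite la doppia disuguaglianza è più economica.

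L'unico punto su cui porrei attenzione è puramente formale: la $D_N^{\ast}(\omega)$ (e la $D_N(\omega)$) per una successione infinita va intesa, come precisato subito dopo la Definizione \ref{def:1.4.5}, come relativa al segmento iniziale dei primi $N$ termini, così che i due teoremi invocati si applichino letteralmente. Non c'è dunque alcun ostacolo reale: la corollarietà è immediata una volta disponibili la caratterizzazione del Teorema \ref{thm:1.4.2} e la relazione del Teorema \ref{thm:1.4.6}.
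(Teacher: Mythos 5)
La tua dimostrazione è corretta e segue esattamente la stessa strada del testo, che liquida il corollario come conseguenza immediata della (\ref{eq:1.10}) e della (\ref{eq:1.13}); tu ti limiti a esplicitare i due sensi dell'equivalenza tramite il teorema del confronto. Nessuna osservazione ulteriore.
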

\begin{proof}
\`{E} una conseguenza immediata di (\ref{eq:1.10}) e (\ref{eq:1.13}).
\end{proof}
Concludiamo questa sezione con una definizione che rende bene l'idea del carattere quantitativo della discrepanza. Appare chiaro dalle definizioni precedenti, infatti, che una successione di numeri reali \`{e} tanto pi\`{u} uniformemente distribuita quanto pi\`{u} velocemente converge a zero la sua discrepanza.\\
\`{E} stato dimostrato che le successioni $\omega$ di punti aventi la discrepanza asintoticamente migliore sono quelle per cui $D_N(\omega)$ \`{e} dell'ordine di $\frac{\log N}{N}$ quando $N\to\infty$. Vale, infatti, la seguente
\begin{defn}
\rm{Una successione $\omega$ di numeri reali \`{e} detta \emph{successione di punti a bassa discrepanza} se
\begin{equation*}
D_N(\omega)=\mathcal{O}(\log N/N)\qquad (N\to \infty)\ .
\end{equation*}}
\end{defn}
\subsection*{Una successione speciale}
\label{Par:1.4}
Esibiamo ora una successione molto particolare costruita sull'intervallo unitario: la \emph{successione di van der Corput}. Ci\`{o} che la rende un interessante oggetto di studio \`{e} che essa \`{e} una successione infinita con bassa discrepanza. \\ La costruzione di tale successione si basa sulla rappresentazione in base $2$ dei numeri naturali e sulla funzione radice inversa, di cui ci accingiamo a fornire la definizione.\\ Nel Capitolo $2$ vedremo in dettaglio la rappresentazione di un numero naturale in una qualsiasi base $b$ e la corrispondente funzione radice inversa.
\begin{defn}
\label{radinv}
\rm{Sia $n \in \mathbb{N}$. Se $n$ si scrive come $\sum_{i=0}^{M}a_i2^{i}$, dove $a_i\in \{0, 1\}$ e $M=\lfloor \log_2 n \rfloor$, la rappresentazione in base $2$ di $n$ \`{e} la sequenza di cifre $[n]_2=a_M a_{M-1}\ldots a_0$. La funzione $\phi_2:\mathbb{N}\longrightarrow [0,1]$, definita come
\begin{equation*}
\phi_2(n)=\sum_{i=0}^{M}\dfrac{a_i}{2^{i+1}}
\end{equation*}
\`{e} detta \emph{funzione radice inversa}, e la sua rappresentazione in base $2$ \`{e} \linebreak$[\phi_2(n)]_2=0.a_0\ldots a_{M-1}a_M$.}
\end{defn}
Descriviamo brevemente questa funzione attraverso un facile esempio.
\begin{eso}
\label{eso}
\rm{Consideriamo la rappresentazione binaria del numero naturale $7$. Si verifica facilmente che
\begin{equation*}
7=\sum_{i=0}^{2}2^{i}\ ,
\end{equation*} 
dove gli $a_i$ sono tutti uguali a 1 e quindi $[7]_2=111$. Allora il valore della funzione radice inversa in $7$ \`{e} 
\begin{equation*}
\phi_2(7)=\sum_{i= 0}^{2}\dfrac{1}{2^{i+1}}\ ,
\end{equation*}
la cui espressione binaria \`{e} $[\phi_2(7)]_2=0.111$.
}
\end{eso}
Ora, poich\`{e} il valore della funzione radice inversa pu\`{o} essere determinato per ogni numero naturale, risulta immediato considerare successioni numeriche il cui termine generale sia proprio $\phi_2(n)$.\\ La successione $\{\phi_2(n)\}$ si ottiene nel modo seguente:\\ $\mathit{1.}$ Si considera la rappresentazione binaria dei numeri naturali
\begin{equation*}
01, 10, 11, 100, 101, 110, 111, \ldots 
\end{equation*}
$\mathit{2.}$ Si considera la funzione radice inversa di ciascuna rappresentazione
\begin{equation*}
\frac{1}{2}, \frac{1}{4}, \frac{3}{4}, \frac{1}{8}, \frac{5}{8}, \frac{3}{8},\frac{7}{8},\ldots
\end{equation*}
o, equivalentemente, la loro espressione binaria
\begin{equation*}
0.1, 0.01, 0.11, 0.001, 0.101, 0.011, 0.111, \ldots \ .
\end{equation*}
Possiamo dunque presentare la seguente definizione. 
\begin{defn}
\label{van der Corput}
\rm{La \emph{successione di van der Corput} \`{e} definita come la successione di termine generale $\phi_2(n)$, dove $\phi_2$ \`{e} la funzione radice inversa.}
\end{defn}
Enunciamo ora il risultato pi\`{u} importante presentato dallo stesso van der Corput su tale successione.
\begin{thm}
La discrepanza $D_N(\omega)$ della successione di van der Corput $\omega=\{\phi_2(n)\}_{n\in\mathbb{N}}$ soddisfa
\begin{equation}
ND_N(\omega)\leq\dfrac{log(N+1)}{\log 2}
\end{equation}
per ogni $N\in\mathbb{N}$.
\end{thm}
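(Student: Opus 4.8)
**The plan is to estimate, for a fixed $N$, the star-discrepancy $D_N^\ast(\omega)$ by bounding the counting error of intervals $[0,c[$, and then invoke Theorem~\ref{thm:1.4.6} (the relation $D_N \le 2D_N^\ast$), or else to argue directly using the self-similar block structure of the van der Corput sequence.** The key observation is that the first $2^k$ terms $\phi_2(1),\ldots,\phi_2(2^k)$ are exactly the $2^k$ points $\{j/2^k : 0 \le j < 2^k\}$, each occurring once: indeed $\phi_2$ sends the $n$ whose binary expansion has length $\le k$ bijectively onto the dyadic rationals with denominator $2^k$ in $[0,1[$. Hence for $N = 2^k$ the empirical measure of $[0,c[$ differs from $c$ by at most $1/2^k = 1/N$, which already gives the bound in this special case. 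The general case is handled by writing $N$ in base $2$.

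First I would fix $N$ and write $N = \sum_{i} b_i 2^{i}$ with $b_i \in \{0,1\}$, so that the index set $\{1,\ldots,N\}$ decomposes into consecutive blocks, one for each nonzero binary digit of $N$, the block associated to $2^i$ having length $2^i$. The crucial point is that within each such block the points $\phi_2(n)$ form a coset of the dyadic group of scale $2^i$: they are $2^i$ equally spaced points at mutual distance $2^{-i}$, shifted by some fixed dyadic offset. Consequently, for any interval $[0,c[$, the contribution of that block to $\left|\sum \chi_{[0,c[}(\phi_2(n)) - (\text{block length})\cdot c\right|$ is at most $1$ (the count of a length-$2^i$ arithmetic progression of spacing $2^{-i}$ inside $[0,c[$ differs from $2^i c$ by less than $1$). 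Summing over the at most $\lfloor \log_2 N\rfloor + 1$ nonzero digits gives $N D_N^\ast(\omega) \le \lfloor \log_2 N \rfloor + 1 \le \dfrac{\log(N+1)}{\log 2}$, and then Theorem~\ref{thm:1.4.6} would transfer this to $D_N$ — though to land exactly on the stated constant one should track the arbitrary-interval case $[a,b[$ directly rather than paying the factor $2$, by subtracting the estimates for $[0,b[$ and $[0,a[$ and noting the errors have opposite sign in the relevant regime, or by a more careful digit-by-digit bookkeeping.

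The main obstacle I anticipate is not the asymptotic order — that is essentially immediate from the block decomposition — but pinning down the \emph{exact} constant $\log(N+1)/\log 2$ rather than something like $(\lfloor \log_2 N\rfloor + 1)/N$ with a possible extra factor. This requires being careful that the per-block error is strictly less than $1$ and that the number of blocks is exactly the number of binary digits of $N$, i.e. $\lfloor \log_2 N \rfloor + 1$, together with the elementary inequality $\lfloor \log_2 N \rfloor + 1 \le \log_2(N+1)$, which holds because if $2^{k-1} \le N < 2^k$ then $\lfloor\log_2 N\rfloor + 1 = k$ and $N + 1 \ge 2^{k-1} + 1$... in fact one needs $N+1 > 2^{\,k-1}$, i.e. $N \ge 2^{k-1}$, which is exactly the lower bound defining $k$; so the inequality is tight precisely at $N = 2^k - 1$. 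I would therefore organize the proof as: (1) establish the bijection $\phi_2$ onto dyadic rationals for indices of bounded binary length; (2) prove the single-block estimate; (3) sum over blocks and apply the elementary logarithm inequality; (4) convert to $D_N$ via the interval-splitting identity $\chi_{[a,b[} = \chi_{[0,b[} - \chi_{[0,a[}$ used in the proof of Theorem~\ref{thm:1.4.6}, checking that no factor of $2$ is actually lost.
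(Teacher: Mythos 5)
The paper states this theorem without proof (it is quoted from van der Corput), so there is no in-text argument to compare yours with; your block-decomposition strategy is the standard one, and its core is sound: the indices $n\le N$ split into consecutive blocks, one per nonzero binary digit of $N$, the block attached to the digit $2^{k_j}$ having length $2^{k_j}$; within such a block the points $\phi_2(n)$ are $2^{k_j}$ equally spaced points of spacing $2^{-k_j}$ (shifted by a dyadic offset smaller than $2^{-k_j}$), so each block contributes counting error strictly less than $1$. Note moreover that this per-block bound holds for an \emph{arbitrary} interval $[a,b[$, not only for anchored intervals: the number of terms of an arithmetic progression of spacing $\delta$ lying in an interval of length $\ell$ differs from $\ell/\delta$ by less than $1$. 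So you can estimate $D_N$ directly and the factor $2$ of Theorem~\ref{thm:1.4.6} never enters; the worry you raise about it resolves itself.

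The genuine gap is in the final inequality. You sum the per-block errors over ``the at most $\lfloor \log_2 N\rfloor+1$ nonzero digits'' and then invoke $\lfloor \log_2 N\rfloor + 1 \le \log_2(N+1)$. That inequality is false for almost every $N$: for $N=4$ the left-hand side is $3$ while $\log_2 5 \approx 2.32$. Your own verification slips at exactly this point: writing $2^{k-1}\le N < 2^k$, you would need $2^{k}\le N+1$, but you only check $2^{k-1}\le N$. The repair is to keep the correct count of blocks: their number is the number $r$ of \emph{ones} in the binary expansion of $N$ (the binary digit sum), not the total number of digits, and since the smallest positive integer with $r$ binary ones is $2^0+2^1+\cdots+2^{r-1}=2^r-1$, one has $2^r\le N+1$, hence $r\le \log(N+1)/\log 2$. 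Combined with the block estimate $ND_N(\omega)\le r$ this yields exactly the stated bound. As written, however, the chain of inequalities you propose does not close, because you discard the information that only the nonzero digits contribute at precisely the step where it is needed.
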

La conseguenza immediata di tale teorema \`{e} il seguente
\begin{cor}
La successione di van der Corput $\{\phi_2(n)\}_{n\in\mathbb{N}}$ \`{e} u.d..
\end{cor}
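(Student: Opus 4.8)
Il piano è di dedurre l'enunciato direttamente dal teorema precedente combinato con il Teorema \ref{thm:1.4.2}. Per prima cosa dividerei ambo i membri della stima $ND_N(\omega)\leq \log(N+1)/\log 2$ per $N$, ottenendo
\begin{equation*}
D_N(\omega)\leq \frac{\log(N+1)}{N\log 2}\qquad \forall N\in\mathbb{N}\ .
\end{equation*}
A questo punto basta osservare che il membro di destra è infinitesimo per $N\to\infty$: infatti $\log(N+1)\leq \log 2+\log N$ e $\log N/N\to 0$ (ad esempio per il teorema di de l'Hôpital, oppure ricordando che $\log N$ cresce più lentamente di qualunque potenza positiva di $N$). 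Dunque $\lim_{N\to\infty}D_N(\omega)=0$.

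Per concludere applicherei il Teorema \ref{thm:1.4.2}, secondo cui una successione è u.d. mod 1 se e solo se la sua discrepanza tende a zero: ne segue che $\omega=\{\phi_2(n)\}_{n\in\mathbb{N}}$ è u.d. mod 1. Poiché la successione di van der Corput è a valori nell'intervallo unitario $[0,1[$, essa coincide con la successione delle sue parti frazionarie, e pertanto essere u.d. mod 1 equivale a essere u.d. in $I$.

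Non prevedo ostacoli sostanziali: si tratta di un corollario immediato del teorema che lo precede, e l'unico passaggio non del tutto formale è la verifica del limite elementare $\log(N+1)/N\to 0$, che è comunque standard. Per completezza si potrebbe osservare che la stessa stima mostra anche che la successione di van der Corput è a bassa discrepanza, essendo $D_N(\omega)=\mathcal{O}(\log N/N)$ per $N\to\infty$.
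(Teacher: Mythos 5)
La tua dimostrazione è corretta e segue esattamente la stessa strada del testo: si divide per $N$ la stima $ND_N(\omega)\leq \log(N+1)/\log 2$, si osserva che $\frac{\log(N+1)}{N\log 2}$ è infinitesimo per $N\to\infty$ e si conclude con il Teorema \ref{thm:1.4.2}. Le osservazioni aggiuntive (il limite elementare, la coincidenza con le parti frazionarie, la bassa discrepanza) sono dettagli che il testo lascia impliciti ma non cambiano l'argomento.
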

\begin{proof}
Segue immediatamente dal Teorema \ref{thm:1.4.2} perch\`{e} $\frac{\log(N+1)}{N\log 2}$ \`{e} infinitesimo per $N\to\infty$.
\end{proof}
Ci sono varie generalizzazioni della successione di van der Corput. La prima estensione della procedura al caso bidimensionale avvenne per opera dello stesso van der Corput. Successivamente Hammersley \cite{Hammersley} e Halton \cite{Halton} hanno proposto altre generalizzazioni al caso di un ipercubo in qualsiasi dimensione, ma di questo ci occuperemo in maniera pi\`{u} estensiva nel terzo capitolo. 
\subsection*{Discrepanza multidimensionale}
Le definizioni di $D_N$ e $D_N^{\ast}$ possono essere estese in maniera piuttosto ovvia a successioni nello spazio $s$-dimensionale $\mathbb{R}^{s}$.
\begin{defn}
\rm{Sia $\textbf{x}_1,\ldots, \textbf{x}_N$ una successione finita in $\mathbb{R}^{s}$. La \emph{discrepanza} $D_N$ e la \emph{star-discrepanza} $D_N^{\ast}$ sono definite come segue:
\begin{equation*}
D_N=D_N(\textbf{x}_1,\ldots, \textbf{x}_N)=\sup_J \left|\frac{1}{N}\sum_{n=1}^{N} \chi_{J}(\{\textbf{x}_n\})-\lambda(J) \right|
\end{equation*}
e
\begin{equation*}
D_N^{\ast}=D_N^{\ast}(\textbf{x}_1,\ldots, \textbf{x}_N)=\sup_{J^{\ast}} \left|\frac{1}{N}\sum_{n=1}^{N} \chi_{J^{\ast}}(\{\textbf{x}_n\})-\lambda(J^{\ast}) \right|\ ,
\end{equation*}
dove $J$ varia tra tutti i possibili sottointervalli di $I^{s}$ della forma \linebreak $\{\textbf{x}\in \mathbb{R}^{s}:\textbf{a} \leq \textbf{x} < \textbf{b}\}$ e $J^{\ast}$ varia tra tutti i sottointervalli della forma $\{\textbf{x}\in \mathbb{R}^{s}:\textbf{0} \leq \textbf{x} < \textbf{b}\}$, e $\lambda$ denota la misura di Lebesgue $s$-dimensionale.\medskip\\
Infine, se la successione ($\textbf{x}_n)$ \`{e} infinita, la discrepanza $D_N$ \`{e} quella del segmento iniziale formato dai primi $N$ termini della successione, come abbiamo gi\`{a} visto nel caso unidimensionale.}
\end{defn}
Vediamo come la relazione tra $D_N$ e $D_N^{\ast}$ in pi\`{u} dimensioni sia molto simile alla (\ref{eq:1.13}) che vale per $s=1$. \\ L'idea \`{e} quella di partire da uno dei sottointervalli $J$ di $I^{s}$ e rappresentarlo in termini di intervalli del tipo $J^{\ast}$. Descriviamo il procedimento per $s=2$ perch\`{e} la generalizzazione a dimensioni superiori si ottiene in maniera analoga. \\ Sia $J$ il seguente sottointervallo di $I^{2}$:
\begin{equation*}
J=\{(x_1,x_2)\in I^{2}: \alpha_1\leq x_1< \beta_1 \wedge \alpha_2\leq x_2< \beta_2\}=[\alpha_1,\beta_1[\times [\alpha_2,\beta_2[\ ,
\end{equation*}
con $0\leq \alpha_i < \beta_i\leq 1$ per $i=1, 2$.  \\ Allora
\begin{eqnarray*}
J &=& \{\big([0,\beta_1[\times [0,\beta_2[\big)\setminus \big([0,\alpha_1[\times [0,\beta_2[\big)\}\\
&\setminus & \{\big([0,\beta_1[\times [0,\alpha_2[\big)\setminus \big([0,\alpha_1[\times[0,\alpha_2[\big)\}= (J_1^{\ast}\setminus J_2^{\ast})\setminus (J_3^{\ast}\setminus J_4^{\ast})\ .
\end{eqnarray*}
Di conseguenza, per le propriet\`{a} di cui gode la misura di Lebesgue, risulta
\begin{equation*}
\lambda(J)=\lambda(J_1^{\ast})-\lambda(J_2^{\ast})-\lambda(J_3^{\ast})+\lambda(J_4^{\ast})
\end{equation*}
e
\begin{equation*}
\sum_{n=1}^{N} \chi_{J}(\{x_n\})=\sum_{n=1}^{N} \chi_{J_1^{\ast}}(\{x_n\})-\sum_{n=1}^{N} \chi_{J_2^{\ast}}(\{x_n\})- \sum_{n=1}^{N} \chi_{J_3^{\ast}}(\{x_n\})+\sum_{n=1}^{N} \chi_{J_4^{\ast}}(\{x_n\}).
\end{equation*}
Con lo stesso ragionamento utilizzato nella dimostrazione del Teorema \ref{thm:1.4.6} si conclude che
\begin{equation}
D_N^{\ast}\leq D_N\leq 4 D_N^{\ast} \ .\label{eq:1.14}
\end{equation}
\paragraph*{}
Si pu\`{o} dimostrare che nel caso $s$-dimensionale la relazione tra $D_N$ e $D_N^{\ast}$ diventa
\begin{equation}
D_N^{\ast}\leq D_N\leq 2^{s} D_N^{\ast} \ .\label{eq:1.15}
\end{equation}
Analogamente a quanto fatto nel caso unidimensionale (Teorema \ref{thm:1.4.2} e Corollario \ref{cor:1.4.7}) si pu\`{o} dimostrare che una successione $w$ \`{e} u.d. mod 1 se e solo se $\lim_{N\to \infty}D_N^{*}(w)=\lim_{N\to \infty}D_N(w)=0$. \smallskip\\ Infine, generalizzando il Teorema \ref{thm:1.4.4}, ritroviamo anche nel caso multidimensionale la stima $D_N\geq \frac{1}{N}$.

\chapter{$LS$-successioni di partizioni e di punti in $[0,1[$}
In questo capitolo prenderemo in esame dapprima una famiglia di successioni di partizioni nell'intervallo unitario, introdotte per la prima volta da S. Kakutani in \cite{Kakutani}, ed il loro rapporto con la teoria dell'uniforme distribuzione. Successivamente, focalizzeremo la nostra attenzione sulle $LS$-successioni, introdotte per la prima volta da I. Carbone in \cite{Carbone}. Seguendo la linea di questo lavoro, prenderemo in esame le successioni di punti associate alle $LS$-successioni di partizioni, vedremo quale relazione intercorre con la successione di van der Corput, di cui abbiamo gi\`{a} studiato la costruzione nel Paragrafo \ref{Par:1.4} e di cui sappiamo che la discrepanza \`{\`{e}} ottimale ed, infine, introdurremo un algoritmo ``veloce'' \cite{Carbone2} completando il capitolo con la presentazione di un programma da noi implementato. L'utilizzo dell'aggettivo veloce non \`{e} casuale, ma nasce dall'esigenza di confronto tra quest'ultimo e l'algoritmo presentato in \cite{Carbone}, rispetto al quale ha una complessit\`{a} computazionale meno elevata. 
\section{Successioni di partizioni e raffinamenti}
\begin{defn}
\rm{Una \emph{partizione} $\pi$ dell'intervallo $I=[0,1[$ \`{e} una famiglia di suoi sottointervalli a due a due disgiunti, determinati da una successione crescente di punti $\{y_0,y_1,\dots, y_k\}$, ossia
\begin{equation*}
\pi=\{[y_i,y_{i+1}[\, :\, 0\leq i \leq k-1\}
\end{equation*}
con $I= \cup_{i=0}^{k-1}[y_i, y_{i+1}[$\ .}\\
\end{defn}
Il concetto di uniforme distribuzione si estende in maniera naturale alle successioni di partizioni dell'intervallo unitario nel modo seguente.
\begin{defn}
\rm{Una successione di partizioni $\{\pi_n\}_{n\in\mathbb{N}}$ di $[0,1[$, dove $\pi_n=\{[y_i^{(n)}, y_{i+1}^{(n)}[\, :\, 1\leq i\leq k(n)\}$, \`{e} detta \emph{uniformemente distribuita} ($u.d.$) se
\begin{equation}
\lim_{n \to \infty}\frac{1}{k(n)} \sum_{i=1} ^{k(n)} f(y_i^{(n)})= \int_0 ^{1} f(t)dt
\end{equation}
per ogni funzione continua $f$ definita su $[0,1]$.}
\end{defn}
Come abbiamo gi\`{a} visto nel Capitolo $1$, la definizione precedente pu\`{o} anche essere espressa nella seguente forma equivalente.
\begin{defn}
\rm{Diciamo che la successione di partizioni $\{\pi_n\}_{n\in\mathbb{N}}$ con $\pi_n=\{[y_i^{(n)}, y_{i+1}^{(n)}[\, :\, 1\leq i\leq k(n)\}$ \`{e} $u.d.$ se per ogni coppia di numeri reali $a,b$, con $0\leq a < b \leq 1$, si ha
\begin{equation}
\lim_{n\to\infty}\frac{1}{k(n)} \sum_{i=1} ^{k(n)} \chi_{[a, b[}(y_i^{(n)})=b-a 
\end{equation}
o, equivalentemente,
\begin{equation}
\lim_{n\to\infty}\frac{1}{k(n)} \sum_{i=1} ^{k(n)} \chi_{[0, c[}(y_i^{(n)})=c \qquad \text{per ogni $0 < c \leq 1$}\ .
\end{equation}}
\end{defn}
Ora descriviamo brevemente la procedura introdotta da Kakutani nel $1976$ per generare una classe particolare di successioni di partizioni u.d. dell'intervallo $[0,1]$.
\begin{defn}[$\alpha$-raffinamenti di Kakutani]
\rm{Sia $\alpha \in\ ]0,1[$ e sia $\pi =\{[y_{i},y_{i+1}[\, :\, 1 \leq i \leq k-1\}$ una partizione di $[0,1[$. Chiamiamo \emph{$\alpha$-raffinamento} di $\pi$, e lo indicheremo con $\alpha\pi$, la suddivisione in due parti dei soli intervalli di lunghezza massima di $\pi$, proporzionalmente ad $\alpha$ e $1-\alpha$, rispettivamente. \\ Se denotiamo con $\alpha^{n}\pi$ l'$\alpha$-raffinamento di $\alpha^{n-1}\pi$ per ogni $n\in\mathbb{N}$, la successione $\{\kappa_n\}_{n\in\mathbb{N}}$ ottenuta per successivi $\alpha$-raffinamenti della partizione banale $\omega=\{[0,1[\}$ \`{e} detta \emph{successione di partizioni di Kakutani}.}
\end{defn}
Osserviamo che per $\alpha=\frac{1}{2}$, la successione di partizioni di Kakutani coincide con la partizione binaria.\\ Fu lo stesso Kakutani a dimostrare che
\begin{thm}
La successione di partizioni di Kakutani $\{\kappa_n\}_{n\in\mathbb{N}}$ \`{e} u.d. per ogni $\alpha\in\ ]0,1[$.
\end{thm}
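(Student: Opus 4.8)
Il piano \`{e} di codificare la famiglia di \emph{tutti} gli intervalli generati dai successivi $\alpha$-raffinamenti mediante un albero binario infinito, cos\`{i} da ricondurre la tesi a una stima asintotica di conteggio di tipo ``rinnovamento''. Per prima cosa mostrerei che la maglia di $\kappa_n$ tende a $0$: detta $\ell_n^\ast$ la lunghezza massima degli intervalli di $\kappa_n$, il passaggio da $\kappa_n$ a $\kappa_{n+1}$ sostituisce ogni intervallo di lunghezza $\ell_n^\ast$ con due intervalli di lunghezze $\alpha\ell_n^\ast$ e $(1-\alpha)\ell_n^\ast$, entrambe strettamente minori di $\ell_n^\ast$; dunque $(\ell_n^\ast)_n$ \`{e} strettamente decrescente, e poich\`{e} ogni lunghezza che compare \`{e} della forma $\alpha^a(1-\alpha)^b$ con $a,b\geq 0$ e tale insieme ha $0$ come unico punto di accumulazione, si conclude $\ell_n^\ast\to 0$. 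In particolare ogni intervallo viene prima o poi raffinato, altrimenti sopravvivrebbe in ogni $\kappa_n$ contro $\ell_n^\ast\to 0$.

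Assocerei poi a ogni parola $w=w_1\cdots w_L\in\{0,1\}^L$ l'intervallo $I_w$ ottenuto scegliendo a ciascun passo il sottointervallo sinistro se $w_i=0$ e destro se $w_i=1$; esso ha lunghezza $|I_w|=\alpha^a(1-\alpha)^b$, dove $a$ e $b$ contano rispettivamente gli zeri e gli uni di $w$. Per quanto osservato l'albero cos\`{i} ottenuto \`{e} privo di foglie e $\kappa_n$ coincide con il ``taglio a scala $\theta_n$'', cio\`{e} la famiglia degli $I_w$ con $|I_w|\leq\theta_n<|I_{w'}|$, essendo $w'$ la parola $w$ privata dell'ultima lettera e $\theta_n:=\ell_n^\ast\to 0$. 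Detto $N(\theta)$ il numero di intervalli del taglio a scala $\theta$ (per $0<\theta\leq 1$), l'autosimilarit\`{a} dell'albero d\`{a} subito che il numero degli $I_w\subseteq J_0$ del taglio a scala $\theta$ vale $N(\theta/|J_0|)$, per ogni intervallo $J_0=I_{w_0}$ dell'albero e ogni $\theta<|J_0|$; in particolare, separando la radice nei suoi due figli, $N(\theta)=N(\theta/\alpha)+N(\theta/(1-\alpha))$ per $\theta<1$ con $N(\theta)=1$ per $\theta\geq 1$. Se ora $c\in\,]0,1]$ \`{e} un estremo di qualche $\kappa_m$, allora $[0,c[$ \`{e} unione finita di intervalli $T_1,\dots,T_r$ dell'albero; perci\`{o}, una volta provato che $N(\theta/s)/N(\theta)\to s$ per $\theta\to 0^+$ e per ogni $s=\alpha^a(1-\alpha)^b$ (il che, per moltiplicativit\`{a}, si riduce ai casi $s=\alpha$ e $s=1-\alpha$), si avrebbe
\[
\frac{1}{k(n)}\,\#\{i:y_i^{(n)}<c\}=\frac{1}{N(\theta_n)}\sum_{l=1}^{r}N\!\left(\frac{\theta_n}{|T_l|}\right)\;\longrightarrow\;\sum_{l=1}^{r}|T_l|=c.
\]
Poich\`{e} tali $c$ sono densi in $[0,1]$ (la maglia \`{e} infinitesima) e le funzioni $c\mapsto k(n)^{-1}\#\{i:y_i^{(n)}<c\}$ sono non decrescenti, ne seguirebbe la convergenza per ogni $c$, cio\`{e} l'uniforme distribuzione di $\{\kappa_n\}$.

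La parte delicata \`{e} dunque l'asintotica di $N$. Una prima stima \`{e} gratuita: gli intervalli del taglio a scala $\theta$ sono disgiunti, ricoprono $[0,1[$ e hanno lunghezza $\leq\theta$ e $>\eta\theta$ con $\eta=\min(\alpha,1-\alpha)$, da cui $1/\theta\leq N(\theta)\leq 1/(\eta\theta)$, ossia $\theta N(\theta)\in[1,1/\eta]$; ma ci\`{o} non basta. Posto $V(x)=N(e^{-x})$, la ricorsione diventa $V(x)=V(x-a)+V(x-b)$ per $x>0$, con $a=\log(1/\alpha)$, $b=\log(1/(1-\alpha))$ e $V\equiv 1$ per $x\leq 0$: \`{e} un'equazione di rinnovamento la cui equazione caratteristica $\alpha^\gamma+(1-\alpha)^\gamma=1$ ammette $\gamma=1$ come unica radice positiva, proprio perch\`{e} $\alpha+(1-\alpha)=1$ e $\gamma\mapsto\alpha^\gamma+(1-\alpha)^\gamma$ \`{e} strettamente decrescente. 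Il teorema del rinnovamento (equivalentemente, un argomento tauberiano sulla trasformata di Mellin di $N$) d\`{a} allora $V(x)=e^{x}\bigl(C+o(1)\bigr)$ nel caso non reticolare $\bigl(\log\alpha/\log(1-\alpha)\notin\mathbb{Q}\bigr)$ e $V(x)=e^{x}\bigl(G(x)+o(1)\bigr)$, con $G$ positiva e periodica di periodo $d$ tale che $a,b\in d\mathbb{Z}$, nel caso reticolare; in entrambi i casi, essendo $-\log s$ un multiplo di $d$ per ogni $s=\alpha^a(1-\alpha)^b$, si conclude $N(\theta/s)/N(\theta)\to s$. Mi aspetto che il controllo fine di questa asintotica sia l'ostacolo principale; acquisito quello, la conclusione \`{e} immediata come sopra.
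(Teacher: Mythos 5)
La tesi enuncia questo teorema senza dimostrarlo, limitandosi a rimandare a \cite{Kakutani}: non c'\`{e} dunque una dimostrazione del testo con cui confrontare la tua, che va valutata di per s\'{e}. L'impianto che proponi \`{e} corretto ed \`{e} in sostanza la dimostrazione ``per conteggio'' basata sulla teoria del rinnovamento: la codifica ad albero, l'identificazione di $\kappa_n$ con il taglio a soglia $\ell_n^{\ast}$, la ricorsione autosimilare $N(\theta)=N(\theta/\alpha)+N(\theta/(1-\alpha))$, la radice caratteristica $\gamma=1$ di $\alpha^{\gamma}+(1-\alpha)^{\gamma}=1$ e la riduzione agli intervalli $[0,c[$ con $c$ estremo di qualche $\kappa_m$, chiusa dall'argomento di monotonia su un insieme denso, sono tutti passaggi giusti. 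Questa strada \`{e} anzi pi\`{u} quantitativa del necessario: il teorema del rinnovamento d\`{a} $N(\theta)\sim \big(H(\alpha)\,\theta\big)^{-1}$ con $H(\alpha)=-\alpha\log\alpha-(1-\alpha)\log(1-\alpha)$ nel caso non reticolare, ed \`{e} lo stesso meccanismo (dicotomia reticolare inclusa) che sta dietro le stime di discrepanza richiamate nel Capitolo 2; \`{e} inoltre un approccio che si estende senza modifiche ai $\rho$-raffinamenti di \cite{Volcic}. Due punti andrebbero per\`{o} esplicitati. Primo, l'uguaglianza tra $\kappa_n$ e il taglio a scala $\ell_n^{\ast}$ richiede una (facile) induzione: gli ingredienti sono che ogni intervallo di $\kappa_n$ ha genitore di lunghezza $>\ell_n^{\ast}$, perch\'{e} si spezzano solo gli intervalli massimali e i massimi sono strettamente decrescenti, e che due partizioni di $[0,1[$ una contenuta nell'altra coincidono. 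Secondo, nel caso reticolare non basta la convergenza di $W(x)=e^{-x}V(x)$ lungo ciascuna classe $x_0+d\mathbb{Z}$: per concludere che $N(\theta_n/s)/N(\theta_n)\to s$ lungo la successione specifica $\theta_n\to 0$ serve la forma uniforme del teorema del rinnovamento, ossia $W(x)-G(x)\to 0$ per $x\to+\infty$ con $G$ periodica di periodo $d$ e, grazie alla tua stima a priori $\theta N(\theta)\ge 1$, limitata dal basso da una costante positiva; con questa precisazione, e osservato che $-\log s\in d\mathbb{Z}$ per ogni lunghezza $s=\alpha^{a}(1-\alpha)^{b}$, il rapporto converge effettivamente a $s$ e l'argomento \`{e} completo.
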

Al fine di illustrare le $LS$-successioni di partizioni, e le successioni di punti ad esse associate, \`{e} opportuno presentare il concetto di $\rho$-raffinamento introdotto da A. Vol\v{c}i\v{c} in \cite{Volcic}, che generalizza quello di $\alpha$-raffinamento di Kakutani.
\begin{defn}[$\rho$-raffinamento]
\rm{Sia $\rho$ una partizione non banale di $[0,1[$. Definiamo $\rho$\emph{-raffinamento} di una partizione  $\pi$ di $[0,1[$, e lo indicheremo con $\rho\pi$, la suddivisione di tutti gli intervalli di $\pi$ che hanno lunghezza massima in maniera omotetica rispetto a $\rho$.}
\end{defn}
\begin{oss}
\rm{Se consideriamo la partizione $\rho=\{[0,\alpha[,[\alpha,1[\}$ e prendiamo $\pi=\omega$, allora il suo $\rho$-raffinamento coincide con l'$\alpha$-raffinamento di Kakutani. \\ Inoltre, come nel caso della procedura di Kakutani, possiamo considerare il $\rho$-raffinamento di $\rho\pi$ ed indicarlo con $\rho^{2}\pi$ e cos\`{i} via, indicando con $\rho^{n}\pi$ il $\rho$-raffinamento di $\rho^{n-1}\pi$.}
\end{oss}
\begin{defn}
\rm{Data una partizione non banale $\rho$ di $[0,1[$, la \emph{successione dei $\rho$-raffinamenti} $\{\rho^{n}\pi\}_{n\in\mathbb{N}}$ della partizione $\pi$ si definisce come la successione di partizioni ottenuta mediante i successivi $\rho$-raffinamenti di $\pi$.}
\end{defn}
Se con $\{\rho^{n}\omega\}_{n\in\mathbb{N}}$ indichiamo la successione di successivi $\rho$-raffinamenti della partizione banale $\omega$, vale il seguente importante risultato (\cite{Volcic}, Theorem 2.7).
\begin{thm}
La successione $\{\rho^{n}\omega\}_{n\in\mathbb{N}}$ \`{e} u.d..
\end{thm}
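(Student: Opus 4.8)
The plan is to establish the continuous-function form of u.d. of partitions, namely $\frac{1}{k(n)}\sum_{i=1}^{k(n)}f(y_i^{(n)})\to\int_0^1 f(t)\,dt$ for every continuous $f$, where $y_1^{(n)},\dots,y_{k(n)}^{(n)}$ are the left endpoints of the intervals of $\rho^n\omega$; by the equivalent characteristic-function formulation it suffices to show that the fraction of intervals of $\rho^n\omega$ whose left endpoint lies in a prescribed $[a,b[\subset I$ tends to $b-a$. Write $\rho=\{J_1,\dots,J_p\}$ with $|J_j|=\ell_j$, so $\ell_1+\dots+\ell_p=1$ and, $\rho$ being non-trivial, $\delta:=\max_j\ell_j<1$. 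The first step is the \emph{mesh lemma}: if $L_n:=\max\{|J|:J\in\rho^n\omega\}$ then $L_n\to 0$. Indeed every interval that ever occurs has length a product $\ell_{j_1}\cdots\ell_{j_r}\le\delta^{\,r}$, so for each $\varepsilon>0$ only finitely many admissible lengths exceed $\varepsilon$ and each is realized by boundedly many intervals; since a $\rho$-refinement always subdivides at least one interval of current maximal length and never creates a longer one, $L_n$ must eventually fall below $\varepsilon$. In particular $k(n)\to\infty$, and every interval that ever appears is subdivided at a later stage (no later than the first step at which $L_n$ equals its length).

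The core of the proof is a self-similar recursion. Since a $\rho$-refinement commutes with affine rescalings of $[0,1[$, for $n\ge 1$ the trace of $\rho^n\omega$ on $J_j$ is the image, under the similarity $A_j\colon[0,1[\to J_j$ of ratio $\ell_j$, of $\rho^{\,m_j(n)}\omega$ for a well-defined integer $m_j(n)$; moreover $m_j(n)\to\infty$, and the crucial point is that $m_1(n),\dots,m_p(n)$ differ by $O(1)$. This synchronization is forced by the greedy rule, since the globally longest interval of $\rho^n\omega$ lies in whichever $J_j$ maximizes $\ell_j\,L_{m_j(n)}$, so the refinement \emph{clocks} $m_j(n)$ cannot drift apart. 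Hence $k(n)=\sum_{j=1}^p k(m_j(n))$ and, writing $\mu_n:=\frac{1}{k(n)}\sum_{J\in\rho^n\omega}\delta_{\ell(J)}$ for the empirical measure of the left endpoints $\ell(J)$,
\[
\mu_n=\sum_{j=1}^{p}\frac{k(m_j(n))}{k(n)}\,(A_j)_{*}\mu_{m_j(n)},
\]
together with the estimate $k(m_j(n))/k(n)\to\ell_j$, which again uses the synchronization of the clocks and the regular growth of $m\mapsto k(m)$. (One may sidestep the quantitative synchronization by passing to a subsequence $n_1<n_2<\cdots$ of stages at which every interval present at stage $n_{k-1}$ has by then been subdivided, so that $\rho^{\,n_k}\omega$ refines the partition obtained by $\rho$-splitting every interval of $\rho^{\,n_{k-1}}\omega$, proving u.d. along this subsequence and handling a general $n$ by sandwiching it between consecutive $n_k$.)

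To pass to the limit, note that the set of probability measures on $[0,1]$ is weak-$*$ compact, so the set $\mathcal L$ of limit points of $(\mu_n)_n$ is non-empty and compact. Using the recursion (after extracting further subsequences so that the weights $k(m_j(n))/k(n)$ converge and each $\mu_{m_j(n)}$ converges, necessarily to a member of $\mathcal L$ since $m_j(n)\to\infty$), every $\mu\in\mathcal L$ satisfies $\mu=\sum_{j=1}^p\ell_j\,(A_j)_{*}\mu^{(j)}$ with $\mu^{(j)}\in\mathcal L$. Choosing $\mu,\mu'\in\mathcal L$ that almost realize the $1$-Wasserstein diameter of $\mathcal L$ and using convexity of $W$ together with $W\big((A_j)_{*}\nu,(A_j)_{*}\nu'\big)=\ell_j\,W(\nu,\nu')$, one gets $W(\mu,\mu')\le\big(\sum_j\ell_j^{2}\big)\operatorname{diam}_W\mathcal L$; since $\sum_j\ell_j^{2}<\sum_j\ell_j=1$ this forces $\operatorname{diam}_W\mathcal L=0$, i.e. $\mathcal L=\{\mu_0\}$. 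Then $\mu_0=\sum_j\ell_j\,(A_j)_{*}\mu_0$; but Lebesgue measure $\lambda$ on $[0,1]$ satisfies the same fixed-point equation, because $(A_j)_{*}\lambda=\ell_j^{-1}\lambda|_{J_j}$ and hence $\sum_j\ell_j\,(A_j)_{*}\lambda=\sum_j\lambda|_{J_j}=\lambda$, and this fixed point is unique, so $\mu_0=\lambda$. Therefore $\mu_n\to\lambda$ weak-$*$, which is exactly the assertion that $\{\rho^n\omega\}_{n\in\mathbb N}$ is u.d.

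The decisive obstacle is the middle paragraph: proving that the clocks $m_j(n)$ remain within $O(1)$ of each other and that $k(m_j(n))/k(n)\to\ell_j$. This is precisely where the rule of refining only the longest intervals enters; the mesh lemma and the measure-theoretic conclusion, by contrast, are soft. If the synchronization estimate proves awkward to write out explicitly, the subsequence-plus-sandwich variant noted above yields the same result, at the price of a finer count of how many intervals arise between consecutive stages $n_k$.
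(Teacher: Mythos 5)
The paper does not prove this theorem: it is quoted from Vol\v{c}i\v{c} (\cite{Volcic}, Theorem 2.7), so there is no internal proof to compare against, and your proposal must stand on its own. Its skeleton is genuinely good: the mesh lemma is correct; the self-similarity observation (the trace of $\rho^{n}\omega$ on $J_j$ equals $A_j(\rho^{m_j(n)}\omega)$, because at any global step the intervals of globally maximal length inside $J_j$ are exactly the images of the locally maximal ones) is correct and worth stating as a lemma; and the weak-$*$ compactness plus $W_1$-contraction argument, with identification of the unique fixed point as Lebesgue measure, is sound \emph{provided} the weights in the recursion converge to $\ell_j$.

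That proviso is where the proof breaks down, and it is not a technicality: the statement $k(m_j(n))/k(n)\to\ell_j$ is precisely the assertion that the fraction of left endpoints falling in $J_j$ tends to $\lambda(J_j)$, i.e.\ the theorem itself tested on the intervals of $\rho$. You assert it via ``synchronization of the clocks and regular growth of $k$'', but neither is established, and the synchronization claim $|m_i(n)-m_j(n)|=O(1)$ is very likely false in general: the greedy rule only keeps the \emph{lengths} $L_{m_i(n)}$ and $L_{m_j(n)}$ within a bounded ratio, and when the numbers $\log\ell_1,\dots,\log\ell_p$ are rationally independent the count of distinct length classes inside a fixed multiplicative window grows without bound, so bounded length-ratio does not bound the index gap. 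The fallback sandwich argument also fails as stated: between consecutive ``full refinement'' times one has $k(n_{k+1})\ge p\,k(n_k)$, so the ratio $k(n_{k+1})/k(n_k)$ is bounded away from $1$ and the squeeze $\#_{n_k}[a,b[/k(n_{k+1})\le \#_n[a,b[/k(n)\le \#_{n_{k+1}}[a,b[/k(n_k)$ does not close. The combinatorial estimate on the growth of $k(m)$ (in Kakutani's case, controlling sums of binomial coefficients $\binom{a+b}{a}$ over the length classes processed so far) is the actual content of Kakutani's and Vol\v{c}i\v{c}'s proofs and must be supplied before the rest of your argument can run.
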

Vediamo infine come \`{e} definita la discrepanza per una successione di partizioni e quando questa pu\`{o} dirsi ottimale.
\begin{defn}
\rm{La \emph{discrepanza} di una successione di partizioni $\{\pi_n\}_{n\in\mathbb{N}}$, con $\pi_n=\{[y_i^{(n)}, y_{i+1}^{(n)}[\, :\, 1\leq i\leq k(n)\}$, \`{e} definita come
\begin{equation*}
D(\pi_n)=\sup_{0\leq a<b\leq 1}\left| \frac{1}{k(n)}\sum_{j=1}^{k(n)}\chi_{[a,b[}(y_j^{(n)})-(b-a) \right|\ .
\end{equation*}
}
\end{defn}
\`{E} stato dimostrato che le successioni di partizioni $\{\pi_n\}_{n\in\mathbb{N}}$ aventi la discrepanza asintoticamente migliore sono quelle per cui $D(\pi_n)$ \`{\`{e}} dell'ordine di $\frac{1}{k(n)}$ quando $n\to\infty$. \\ \`{E} stata introdotta, perci\`{o}, la seguente
\begin{defn}
\rm{Una successione di partizioni $\{\pi_n\}_{n\in\mathbb{N}}$, dove $\pi_n=\{[y_i^{(n)}, y_{i+1}^{(n)}[\, :\, 1\leq i\leq k(n)\}$, \`{e} detta \emph{a bassa discrepanza} se
\begin{equation*}
D(\pi_n)=\mathcal{O}(1/k(n))\qquad (n\to \infty)\ .
\end{equation*}}
\end{defn}
\section{$LS$-successioni di partizioni}
In questo paragrafo prenderemo in esame una classe particolare di $\rho$-raffina-menti della partizione banale $\omega$ dell'intervallo $[0,1[$ : le $LS$-successioni di partizioni, che costituiscono una classe numerabile di successioni, al variare di $L$ ed $S$ in $\mathbb{N}$. 
\begin{defn}
\rm{Fissiamo due interi positivi $L$ ed $S$ e sia $\beta$ il numero reale positivo che verifica l'equazione $L\beta + S\beta^{2}=1$. Indichiamo con $\rho_{L,S}$ la partizione definita da $L$ intervalli lunghi aventi lunghezza $\beta$ e da $S$ intervalli corti aventi lunghezza $\beta^{2}$. Chiameremo \emph{$LS$-successione di partizioni} la successione $\{\rho_{LS}^{n}\omega\}_{n\in\mathbb{N}}$ (brevemente, $\{\rho_{L,S}^{n}\}_{n\in\mathbb{N}}$) dei successivi $\rho_{L,S}$-raffinamenti della partizione banale $\omega$ dove, ovviamente, con $\rho_{L,S}^{n}$ denotiamo il $\rho_{L,S}$-raffinamento di $\rho_{L,S}^{n-1}$.}
\end{defn}
Illustriamo meglio questa costruzione attraverso alcuni esempi.
\begin{es}
\label{es}
\rm{\medskip \textbf{1.} Consideriamo $L=S=1$ e la corrispondente $1,1$- successione di partizioni $\{\rho_{1,1}^{n}\}_{n\in\mathbb{N}}$ determinata dalla relazione $\beta+\beta^{2}=1$, la cui soluzione in $[0,1[$ \`{e} il numero aureo $\beta=\frac{\sqrt{5}-1}{2}$. La prima partizione conster\`{a} semplicemente di due intervalli di lunghezza $\beta$ e $\beta^{2}$, rispettivamente, ossia
\begin{equation*}
\rho_{1,1}^{1}=\{[0,\beta[,[\beta,1[\}\ ,
\end{equation*}
come mostra il seguente grafico:\\

\begin{figure}[h!]
\begin{center}
\begin{tikzpicture}[scale=13]
\draw (0, 0) -- (1,0);
\draw (0,-0.01) node[below, black]{0} -- (0,0.01);
\draw (1,-0.01) node[below, black]{1} -- (1,0.01);
\draw (0.61803,-0.01) node[below, black]{$\beta$}-- (0.61803,0.01);
\end{tikzpicture}
\end{center}
\end{figure}

Per costruire la seconda partizione, si suddivide l'intervallo di lunghezza maggiore (il primo) proporzionalmente a $\beta$ e $\beta^{2}$ in modo da ottenere da esso un intervallo lungo ed uno corto, e cio\`{e}
\begin{equation*}
\rho_{1,1}^{2}=\{[0,\beta^{2}[,[\beta^{2},\beta[,[\beta ,1[\}\ .
\end{equation*}

\begin{figure}[h!]
\begin{center}
\begin{tikzpicture}[scale=13]
\draw (0, 0) -- (1,0);
\draw (0,-0.01) node[below, black]{$0$} -- (0,0.01);
\draw (1,-0.01) node[below, black]{$1$} -- (1,0.01);
\draw (0.61803,-0.01) node[below, black]{$\beta$}-- (0.61803,0.01);
\draw (0.38196,-0.01) node[below, black]{$\beta^{2}$}-- (0.38196,0.01);
\end{tikzpicture}
\end{center}
\end{figure}

A questo punto, come si pu\`{o} facilmente osservare, ci sono due intervalli di lunghezza massima, il primo ed il terzo, per cui si procede alla loro sola suddivisione. Otteniamo cos\`{i} la terza partizione 
\begin{equation*}
\rho_{1,1}^{3}=\{[0,\beta^{3}[, [\beta^{3},\beta^{2}[,[\beta^{2},\beta[,[\beta , \beta +\beta^{3}[,[\beta+\beta^{3},1[\}\ ,
\end{equation*}
la cui rappresentazione grafica \`{e}\\

\begin{figure}[h!]
\begin{center}
\begin{tikzpicture}[scale=13]
\draw (0, 0) -- (1,0);
\draw (0,-0.01) node[below, black]{0} -- (0,0.01);
\draw (1,-0.01) node[below, black]{1} -- (1,0.01);
\draw (0.61803,-0.01) node[below, black]{$\beta$}-- (0.61803,0.01);
\draw (0.38196,-0.01) node[below, black]{$\beta^{2}$}-- (0.38196,0.01);
\draw (0.23606,-0.01) node[below, black]{$\beta^{3}$}-- (0.23606,0.01);
\draw (0.85410,-0.01) node[below, black]{$\beta +\beta^{3}$}-- (0.85410,0.01);
\end{tikzpicture}
\end{center}
\end{figure}

e cos\`{i} via in modo da ottenere le altre. \\ Sottolineiamo che questo \`{e} un esempio di $\beta$-raffinamento alla Kakutani, cio\`{e} $\{\rho_{1,1}^{n}\}_{n\in\mathbb{N}}$ \`{e} la successione di partizioni di Kakutani ottenuta per successivi $\beta$-raffinamenti di $\omega$.\\
Passiamo ora ad un esempio un po' pi\`{u} complicato.
\paragraph*{}
\textbf{2.} Consideriamo la successione di partizioni $\{\rho_{2,1}^{n}\}_{n\in\mathbb{N}}$ corrispondente a \linebreak $2\beta+\beta^{2}=1$. Al primo passo avremo una suddivisione dell'intervallo unitario in due intervalli lunghi ed uno corto, in modo da ottenere la seguente partizione
\begin{equation*}
\rho_{2,1}^{1}=\{[0,\beta[,[\beta, 2\beta[,[2\beta,1[\}\ ,
\end{equation*}

\begin{figure}[h!]
\begin{center}
\begin{tikzpicture}[scale=13]
\draw (0, 0) -- (1,0);
\draw (0,-0.01) node[below, black]{\small $0$} -- (0,0.01);
\draw (1,-0.01) node[below, black] {\small $1$} -- (1,0.01);
\draw (0.41421,-0.01) node[below, black]{\small$\beta$}-- (0.41421,0.01);
\draw (0.82842,-0.01) node[below, black]{\small$2\beta$}-- (0.82842,0.01);
\end{tikzpicture}
\end{center}
\end{figure}

Per ottenere la seconda partizione si dovranno pertanto dividere i primi due intervalli, in modo da ottenere da ciascuno di essi due intervalli lunghi e uno corto. Si avr\`{a} dunque
\begin{eqnarray*}
\rho_{2,1}^{2}&=& \{[0,\beta^{2}[,[\beta^{2},2\beta^{2}[,[2\beta^{2},\beta[,[\beta,\beta+\beta^{2}[, \\
& & [\beta+\beta^{2}, \beta+2\beta^{2}[,[\beta+2\beta^{2},2\beta[,[2\beta,1[\}\ ,
\end{eqnarray*}
che \`{e} graficamente rappresentato qui di seguito.\pagebreak\\
\begin{figure}[h!]
\begin{center}
\begin{tikzpicture}[scale=13]
\draw (0, 0) -- (1,0);
\draw (0,-0.01) node[below, black]{\small $0$} -- (0,0.01);
\draw (1,-0.01) node[below, black] {\small $1$} -- (1,0.01);
\draw (0.41421,-0.01) node[below, black]{\small$\beta$}-- (0.41421,0.01);
\draw (0.82842,-0.01) node[below, black]{\small$2\beta$}-- (0.82842,0.01);
\draw (0.17157,-0.01) node[below, black]{\small $\beta^{2}$}-- (0.17157,0.01);
\draw (0.58578,-0.01) node[below, black]{\small $\beta +\beta^{2}$}-- (0.58578,0.01);
\draw (0.34314,-0.01) node[below, black]{\small $2\beta^{2}$}-- (0.34314,0.01);
\draw (0.75735,-0.01) node[below, black]{\small $\beta +2\beta^{2}$}-- (0.75735,0.01);
\end{tikzpicture}
\end{center}
\end{figure}

Al passo successivo gli intervalli da dividere saranno cinque, ciascuno di lunghezza $\beta^{2}$.\\ Iterando questo processo otteniamo le partizioni successive.
\paragraph*{}
\textbf{3.} Infine, presentiamo la successione di partizioni $\{\rho_{1,2}^{n}\}_{n\in\mathbb{N}}$ relativa a \linebreak $\beta+2\beta^{2}=1$. Sebbene (come vedremo in seguito) a differenza delle due precedenti non si tratti di una successione a bassa discrepanza, tale esempio sar\`{a} utile per la comprensione dell'algoritmo nella sua formulazione generale.\\ La prima partizione si ottiene dividendo l'intervallo unitario in un intervallo lungo di lunghezza $\beta=\frac{1}{2}$ e in due intervalli corti di lunghezza $\beta^{2}=\frac{1}{4}$, nel modo seguente:\\

\begin{figure}[h!]
\begin{center}
\begin{tikzpicture}[scale=13]
\draw (0, 0) -- (1,0);
\draw (0,-0.01) node[below, black]{0} -- (0,0.01);
\draw (1,-0.01) node[below, black]{1} -- (1,0.01);
\draw (0.5,-0.01) node[below, black]{$\beta$}-- (0.5,0.01);
\draw (0.75,-0.01) node[below, black]{$\beta+\beta^{2}$}-- (0.75,0.01);
\end{tikzpicture}
\end{center}
\end{figure}

cio\`{i}
\begin{equation*}
\rho_{1,2}^{1}=\{[0,\beta[,[\beta, \beta+\beta^{2}[,[\beta+\beta^{2},1[\}\ . \medskip
\end{equation*}
Al secondo passo dobbiamo suddividere l'unico intervallo di lunghezza massima $\beta$ proporzionalmente a $\beta$ e $\beta^{2}$, ottenendo cos\`{i}
\begin{equation*}
\rho_{1,2}^{2}=\{[0,\beta^{2}[,[\beta^{2}, \beta^{2}+\beta^{3}[,[\beta^{2}+\beta^{3},\beta[, [\beta, \beta+\beta^{2}[,[\beta+\beta^{2},1[\}\ ,
\end{equation*}
che \`{e} rappresentato cos\`{i}:\\

\begin{figure}[h!]
\begin{center}
\begin{tikzpicture}[scale=13]
\draw (0, 0) -- (1,0);
\draw (0,-0.01) node[below, black]{0} -- (0,0.01);
\draw (1,-0.01) node[below, black]{1} -- (1,0.01);
\draw (0.5,-0.01) node[below, black]{$\beta$}-- (0.5,0.01);
\draw (0.75,-0.01) node[below, black]{$\beta+\beta^{2}$}-- (0.75,0.01);
\draw (0.25,-0.01) node[below, black]{$\beta^{2}$}-- (0.25,0.01);
\draw (0.375,-0.01) node[below, black]{$\beta^2 +\beta^{3}$}-- (0.375,0.01);
\end{tikzpicture}
\end{center}
\end{figure}

 Procedendo in maniera analoga a quanto fatto finora si ottengono tutte le altre partizioni.}
\end{es}
Prendiamo in considerazione alcune propriet\`{a} di tali successioni, che mettano in luce il loro carattere ricorsivo e vediamo come tra esse meriti una particolare attenzione la successione ottenuta considerando la coppia di interi $L=S=1$.
\begin{ossi}
\rm{ \textbf{(i)} Notiamo innanzitutto che per ogni partizione $\rho_{L,S}^{n}$ contiene solo due tipi di intervalli: quelli lunghi e quelli corti, i primi di misura $\beta^{n}$, i secondi $\beta^{n+1}$. Se con $t_n$ indichiamo il numero totale di intervalli di $\rho_{L,S}^{n}$, allora vale la seguente equazione alle differenze finite, lineare, omogenea, del secondo ordine
\begin{equation}
t_n=Lt_{n-1}+St_{n-2} \ , \label{eq:2.4} \medskip
\end{equation}
con condizioni iniziali $t_0=1$ e $t_1=L+S$. \\ Se indichiamo con $l_n$ il numero di intervalli lunghi di $\rho_{L,S}^{n}$ e con $s_n$ il numero dei suoi intervalli corti, valgono le seguenti relazioni:
\begin{equation*}
t_n=l_n+s_n\ , \qquad l_n=Ll_{n-1}+s_{n-1}\ , \qquad s_n=Sl_{n-1} \ .
\end{equation*}
Infatti l'$n$-esima partizione si ottiene dalla  somma dei suoi intervalli lunghi e dei suoi intervalli corti. In particolare, gli intervalli lunghi della partizione $n$-esima sono dati dalla suddivisione degli $l_{n-1}$ intervalli lunghi della partizione $\rho_{L,S}^{n-1}$, proporzionalmente in $L$ intervalli lunghi, sommati agli intervalli corti della partizione precedente $\rho_{L,S}^{n-1}$ che diventano lunghi al passo successivo, mentre gli intervalli corti sono quegli intervalli ottenuti suddividendo gli $l_{n-1}$ intervalli lunghi della partizione $\rho_{L,S}^{n-1}$, proporzionalmente in $S$ intervalli corti. Allora, utilizzando queste relazioni si ha che
\begin{eqnarray*}
t_n&=&l_n+s_n=Ll_{n-1}+s_{n-1}+Sl_{n-1}= \\
&=& Ll_{n-1}+Sl_{n-2}+SLl_{n-2}+Ss_{n-2}=\\
&=&L(l_{n-1}+s_{n-1})+S(l_{n-2}+s_{n-2})=\\
&=&Lt_{n-1}+St_{n-2}.
\end{eqnarray*}}
\rm{\textbf{(ii)} Dalla $(2.4)$ segue immediatamente che la successione $\{t_n\}_{n\in\mathbb{N}}$, nel caso $L=S=1$, \`{e} la ben nota successione di Fibonacci. D'altra parte abbiamo gi\`{a} evidenziato nell'Esempio \ref{es} che la successione $\{\rho_{1,1}^{n}\}_{n\in\mathbb{N}}$ \`{e} una successione di Kakutani. Per questi due motivi insieme, la $1,1$-successione di partizioni corrispondente \`{e} stata chiamata in \cite{Carbone} \emph{successione di partizioni di Kakutani-Fibonacci}.}
\end{ossi}
In (\cite{Carbone}, Theorem 2.2) \`{e} stato dimostrato, altres\`{i}, che per $L\geq S$ tutte le $LS$-successioni di partizioni hanno discrepanza ottimale, e cio\`{e} $D(\rho_{L,S}^{n})=\mathcal{O}\left( \frac{1}{t_n} \right)$ per $n\to \infty$, ed \`{e} stata data una stima per la discrepanza in tutti i casi. Osservato che $L\geq S$ equivale a $S<L+1$, e ricordato che $L\beta +S\beta^{2}=1$, si ha il seguente
\begin{thm}
(i) Se $S<L+1$ esistono due costanti positive $c_1$ e $c_2$ tali che per ogni $n\in\mathbb{N}$
\begin{equation*}
\frac{c_1}{t_n}\leq D(\rho_{L,S}^{n})\leq \frac{c_2}{t_n}\ .
\end{equation*}
(ii) Se $S=L+1$ esistono due costanti positive $c_3$ e $c_4$ tali che per ogni $n\in\mathbb{N}$
\begin{equation*}
c_3\frac{\log t_n}{t_n}\leq D(\rho_{L,S}^{n})\leq c_4\frac{\log t_n}{t_n}\ .
\end{equation*}
(ii) Se $S>L+1$ esistono due costanti positive $c_5$ e $c_6$ tali che per ogni $n\in\mathbb{N}$
\begin{equation*}
\frac{c_5}{(t_n)^{\gamma}}\leq D(\rho_{L,S}^{n})\leq \frac{c_6}{(t_n)^{\gamma}}\ ,
\end{equation*}
dove $\gamma=1+\frac{\log S\beta}{\log\beta}< 1$.
\end{thm}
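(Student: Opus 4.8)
We reduce the problem to a one-dimensional partial-sum estimate and then exploit the self-similarity of the partitions. By Theorem~\ref{thm:1.4.6} it suffices to control the star-discrepancy. List the intervals of $\rho_{L,S}^{n}$ from left to right with lengths $\lambda_{1}^{(n)},\dots,\lambda_{t_{n}}^{(n)}$ (each equal to $\beta^{n}$ or $\beta^{n+1}$); each interval contributes exactly one partition point, namely its left endpoint. If $c$ is the right endpoint of the $m$-th interval, then $\frac1{t_{n}}\#\{j:y_{j}^{(n)}<c\}-c=S_{m}^{(n)}$, where
\[
S_{m}^{(n)}:=\sum_{i=1}^{m}\Bigl(\tfrac1{t_{n}}-\lambda_{i}^{(n)}\Bigr),
\]
and for an arbitrary $c$ this quantity changes by at most $\max_{i}\lambda_{i}^{(n)}=\beta^{n}$. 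Hence, with $\Delta_{n}:=\max_{0\le m\le t_{n}}\lvert S_{m}^{(n)}\rvert$, one gets $\Delta_{n}\le D^{\ast}(\rho_{L,S}^{n})\le\Delta_{n}+\beta^{n}$, and since $t_{n}\asymp\beta^{-n}$ it suffices to estimate $\Delta_{n}$.

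The structural fact, immediate from the definition of the $\rho_{L,S}$-refinement together with the identity $t_{n}=Lt_{n-1}+St_{n-2}$, is that for $n\ge2$ the partition $\rho_{L,S}^{n}$ is the left-to-right concatenation of $L$ affine copies of $\rho_{L,S}^{n-1}$ (scaled by $\beta$) followed by $S$ affine copies of $\rho_{L,S}^{n-2}$ (scaled by $\beta^{2}$). Splitting $S_{m}^{(n)}$ according to the block containing $m$, each complete long block contributes $\frac{t_{n-1}}{t_{n}}-\beta$ and each complete short block $\frac{t_{n-2}}{t_{n}}-\beta^{2}$, while the portion inside the partial block equals $\beta S_{m'}^{(n-1)}$ or $\beta^{2}S_{m'}^{(n-2)}$ up to a ``drift'' term $m'\bigl(\tfrac1{t_{n}}-\tfrac{\beta}{t_{n-1}}\bigr)$, resp.\ $m'\bigl(\tfrac1{t_{n}}-\tfrac{\beta^{2}}{t_{n-2}}\bigr)$. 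Using the closed form $t_{n}=C_{1}\beta^{-n}+C_{2}(-S\beta)^{n}$ (the characteristic roots of $t_{n}=Lt_{n-1}+St_{n-2}$ being $1/\beta$ and $-S\beta$) one checks that each complete-block contribution and each drift term is $O\bigl((S\beta^{2})^{n}\bigr)$; as the number of blocks is the constant $L+S$, this yields the recursive inequality
\[
\Delta_{n}\ \le\ C_{0}\,(S\beta^{2})^{n}\ +\ \max\bigl(\beta\,\Delta_{n-1},\ \beta^{2}\,\Delta_{n-2}\bigr).
\]

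The trichotomy is governed by the size of $S\beta$ relative to $1$, equivalently $S$ relative to $L+1$ (from $S\beta^{2}+L\beta=1$ one gets $S\beta<1\iff S<L+1$), because the homogeneous solutions of the recursion grow like $\beta^{n}\asymp1/t_{n}$ while the inhomogeneous term is $(S\beta^{2})^{n}=(S\beta)^{n}\beta^{n}$. Solving by induction: if $S<L+1$ the series $\sum_{k}(S\beta^{2})^{k}\beta^{n-k}$ converges, so $\Delta_{n}=O(\beta^{n})=O(1/t_{n})$; if $S=L+1$ then $(S\beta^{2})^{n}=\beta^{n}$ and one picks up a factor $n$, so $\Delta_{n}=O(n\beta^{n})=O(\log t_{n}/t_{n})$; if $S>L+1$ then $\beta<S\beta^{2}<1$, the inhomogeneous term dominates, and $\Delta_{n}=O\bigl((S\beta^{2})^{n}\bigr)=O(t_{n}^{-\gamma})$ with $\gamma=\log(S\beta^{2})/\log\beta=1+\log(S\beta)/\log\beta<1$. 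For the matching lower bounds, case~(i) is covered by the general estimate $D(\rho_{L,S}^{n})\ge1/t_{n}$ (Theorem~\ref{thm:1.4.4} applied to the $t_{n}$ partition points); in case~(iii) it is enough to evaluate $\lvert S_{m}^{(n)}\rvert$ at the left endpoint of the first short block, which already has size $\asymp(S\beta^{2})^{n}\asymp t_{n}^{-\gamma}$.

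The main obstacle is the lower bound in case~(ii): one must produce an $m$ with $\lvert S_{m}^{(n)}\rvert\gtrsim n\beta^{n}$. The natural choice is a point $c^{\ast}$ that, at each of the $\Theta(n)$ levels of the self-similar recursion, sits just past the left endpoint of the first short block, so that $S_{m}^{(n)}\approx\sum_{j}\beta^{2j}L\bigl(\frac{t_{n-1-2j}}{t_{n-2j}}-\beta\bigr)$ is a sum of $\Theta(n)$ terms each of magnitude $\asymp\beta^{n}$. The delicate point is non-cancellation: this is exactly where $S=L+1$ is used, since then $(-S\beta)^{k}=(-1)^{k}$ is constant along the subsequence $k=n,n-2,n-4,\dots$, so $t_{k-1}-\beta t_{k}$ keeps a fixed sign and bounded magnitude there and all $\Theta(n)$ terms reinforce; a separate estimate shows the accumulated drift terms, though of the same order $\beta^{n}$ per level, carry a strictly smaller constant and can be kept below the main term by fixing $c^{\ast}$ carefully. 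Everything else (the explicit constants $C_{1},C_{2}$, the elementary equivalences $S\beta<1\iff S<L+1$ and $S\beta>1\iff S>L+1$, and the two inductions bounding $\Delta_{n}$) is routine.
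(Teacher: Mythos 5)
First, a remark on the comparison itself: the thesis does not prove this theorem — it is quoted verbatim from (\cite{Carbone}, Theorem 2.2) and stated without proof — so there is no argument in the paper to measure yours against. Judging your proposal on its own terms: the overall architecture is sound. The reduction of $D(\rho_{L,S}^{n})$ to the maximal partial sum $\Delta_n$ (via Theorem \ref{thm:1.4.6} and the observation that moving $c$ inside an interval changes the counting function by $O(\beta^n)$), the self-similar decomposition of $\rho_{L,S}^{n}$ into $L$ scaled copies of $\rho_{L,S}^{n-1}$ and $S$ scaled copies of $\rho_{L,S}^{n-2}$, the computation $t_{k-1}-\beta t_k=C_2(-S\beta)^{k-1}(1+S\beta^2)$ from the characteristic roots $1/\beta$ and $-S\beta$, and the resulting recursion $\Delta_n\le C_0(S\beta^2)^n+\max(\beta\Delta_{n-1},\beta^2\Delta_{n-2})$ are all correct, as is the equivalence $S\beta\lessgtr 1\iff S\lessgtr L+1$ that drives the trichotomy. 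The upper bounds in all three cases, the lower bound in (i) via Theorem \ref{thm:1.4.4}, and the lower bound in (iii) (which needs $C_2\neq0$; this holds because $C_2=0$ would force $L+S=1/\beta$, i.e.\ $L+S=1$) all go through.

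The one genuine gap is the lower bound in case (ii), which is also the hardest part of the theorem, and you have only asserted rather than proved the step that closes it. Your construction — descend through the first short block at each of $\Theta(n)$ levels of parity $n,n-2,n-4,\dots$, so that the main contributions $\beta^{2j}L\bigl(t_{n-2j-1}/t_{n-2j}-\beta\bigr)\approx(-1)^{n-1}LC_2(1+\beta)\beta^{n}/C_1$ all carry the same sign when $S\beta=1$ — is the right idea, but the sentence ``a separate estimate shows the accumulated drift terms\dots carry a strictly smaller constant'' is exactly the estimate that must be supplied: each level also produces a drift $m'\bigl(\tfrac1{t_{n-2j}}-\tfrac{\beta^2}{t_{n-2j-2}}\bigr)$ of the \emph{same} order $\beta^{n}$ and (as one computes, $t_{k-2}-\beta^2t_k=C_2(-1)^{k}(1-\beta^2)$ when $S\beta=1$) of the \emph{opposite} sign to the main term, so without a quantitative comparison the $\Theta(n)$ reinforcing terms could in principle be cancelled. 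The comparison does work out — the worst-case drift per level is bounded by $|C_2|(1-\beta^2)/t_{n-2j}$ against a main term $L|C_2|(1+\beta)/t_{n-2j}$, and $1-\beta<1\le L$ — but this inequality, together with the choice of $c^{\ast}$ that keeps $m'$ small, is the substance of the case-(ii) lower bound and needs to be written out rather than deferred.
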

Sottolineiamo, infine, che grazie a questo teorema, per la prima volta in letteratura \`{e} stata presentata una stima per la discrepanza di una successione di Kakutani (che risulta essere, inoltre, a bassa discrepanza).
\section{$LS$-successioni di punti}
Passiamo ora allo studio delle successioni di punti (anche'esse introdotte in \cite{Carbone}) associate alle $LS$-successioni di partizioni. Lo scopo \`{e} quello di presentare il riordinamento ``alla van der Corput'' della successione di punti associata a ciascuna $LS$-successione di partizioni, in modo che la corrispondente successione di punti abbia discrepanza migliore possibile. Con la locuzione ``alla van der Corput'' intendiamo il processo di riordinamento che ricorda la costruzione della successione di van der Corput, studiata nel Paragrafo \ref{Par:1.4}.
\begin{defn}
\label{def:2.3.1}
\rm{La \emph{$LS$-successione di punti} $\{\xi_{L,S}^{n}\}_{n\in\mathbb{N}}$ associata alla successione di partizioni $\{\rho_{L,S}^{n}\}_{n\in\mathbb{N}}$ \`{e} definita come segue. I primi $t_1$ punti sono esattamente gli estremi sinistri degli intervalli di $\rho_{L,S}^{1}$, presi nell'ordine lessicografico. Tale insieme ordinato sar\`{a} denotato con $\Lambda_{L,S}^{1}$ ed i suoi punti saranno indicati con $\xi_1^{(1)},\ldots, \xi_{t_1}^{(1)}$. In generale, per $n>1$, se con $\Lambda_{L,S}^{n}=\Big(\xi_1^{(n)},\ldots, \xi_{t_n}^{(n)}\Big)$ indichiamo i $t_n$ punti della partizione $\rho_{L,S}^{n}$, scritti nell'ordine giusto, i punti della partizione successiva $\rho_{L,S}^{n+1}$ si ottengono applicando agli estremi sinistri degli intervalli lunghi della partizione precedente le seguenti famiglie di funzioni
 \begin{equation}
\varphi^{(n+1)}_i(x)=x+i\beta^{n+1} \quad e \quad \varphi^{(n+1)}_{L,j}(x)=x+L\beta^{n+1}+j{\beta}^{n+2} \ , \label{funzioni_phi}
\end{equation}
con $1\leq i \leq L$ e $1\leq j \leq S-1$. }
\end{defn}
Precisamente, i $t_{n+1}$ punti della partizione $\rho_{L,S}^{n+1}$ sono ordinati nel modo seguente:
\begin{equation*}
\begin{split}
& \Lambda_{L,S}^{n+1} = \Big(\xi_1^{(n)},\ldots, \xi_{t_n}^{(n)},\\
& \varphi_1^{(n+1)}(\xi_1^{(n)}),\ldots , \varphi_1^{(n+1)}(\xi_{l_n}^{(n)}),\ldots , \varphi_L^{(n+1)}(\xi_1^{(n)}), \ldots , \varphi_L^{(n+1)}(\xi_{l_n}^{(n)}) \\
& \varphi_{L,1}^{(n+1)}(\xi_1^{(n)}), \ldots, \varphi_{L,1}^{(n+1)}(\xi_{l_n}^{(n)}),\ldots , \varphi_{L,S-1}^{(n+1)}(\xi_1^{(n)}), \ldots , \varphi_{L,S-1}^{(n+1)}(\xi_{l_n}^{(n)})\Big)\ .
\end{split}
\end{equation*}
In (\cite{Carbone}, Theorem 3.9) \`{e} stato dimostrato che $D(\xi_{L,S}^{1}, \ldots, \xi_{L,S}^{N})=\mathcal{O}\left(\frac{\log N}{N}\right)$ per $n\to\infty$, se $L\geq S$, e cio\`{e}, che a successioni di partizioni $\{\rho_{L,S}^{n}\}$ a bassa discrepanza corrispondono successioni di punti $\{\xi_{L,S}^{n}\}$ a bassa discrepanza. Richiamiamo, a questo punto, il seguente
\begin{thm}
(i) Se $S<L+1$ esiste una costante positiva $k_1$ tale che per ogni $N\in\mathbb{N}$
\begin{equation*}
D_N(\xi_{L,S}^{1}, \xi_{L,S}^{2},\ldots, \xi_{L,S}^{N})\leq k_1\frac{\log N}{N}\ .
\end{equation*}
(ii) Se $S=L+1$ esiste  una costante positiva $k_2$ tale che per ogni $N\in\mathbb{N}$
\begin{equation*}
D_N(\xi_{L,S}^{1}, \xi_{L,S}^{2},\ldots, \xi_{L,S}^{N})\leq k_2\frac{\log^{2} N}{N}\ .
\end{equation*}
(ii) Se $S>L+1$ esiste una costante positiva $k_3$ tale che per ogni $N\in\mathbb{N}$
\begin{equation*}
D_N(\xi_{L,S}^{1}, \xi_{L,S}^{2},\ldots, \xi_{L,S}^{N})\leq k_3\frac{\log N}{N^{\gamma}}\ ,
\end{equation*}
dove $\gamma=1+\frac{\log S\beta}{\log\beta}< 1$.
\end{thm}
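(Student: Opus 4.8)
The plan is to estimate $D_N$ by reducing first to the star-discrepancy, then settling the ``complete'' indices $N=t_n$ by means of the partition estimates stated above, and finally interpolating to arbitrary $N$ through a mixed-radix numeration system attached to the sequence $(t_n)$, in the spirit of the classical treatment of the van der Corput sequence.

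By Theorem~\ref{thm:1.4.6} it is enough to bound $D_N^{\ast}(\xi_{L,S})$, so that only the intervals $[0,c[$ with $0<c\le 1$ have to be handled. When $N=t_n$, the first $N$ points of $\{\xi_{L,S}^{n}\}$ are, as a set, precisely the left endpoints of the intervals of $\rho_{L,S}^{n}$, whence the counting function $\sum_{j\le t_n}\chi_{[0,c[}(\xi_{L,S}^{j})$ is the one occurring in $D(\rho_{L,S}^{n})$ and therefore $D_{t_n}(\xi_{L,S})=D(\rho_{L,S}^{n})$. The partition discrepancy theorem recalled above then yields $t_n\,D_{t_n}(\xi_{L,S})=O(1)$ if $S<L+1$, $O(\log t_n)$ if $S=L+1$, and $O\bigl(t_n^{\,1-\gamma}\bigr)$ if $S>L+1$.

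For a general $N$ with $t_n\le N<t_{n+1}$ I would unravel the block structure of Definition~\ref{def:2.3.1}. The $m=N-t_n$ additional points are obtained by applying, in the prescribed order, the affine contractions $\varphi^{(n+1)}_i$ (for $1\le i\le L$) and $\varphi^{(n+1)}_{L,j}$ (for $1\le j\le S-1$) to the $l_n$ left endpoints of the long intervals of $\rho_{L,S}^{n}$; hence $m=q\,l_n+r$ with $0\le q\le L+S-2$ and $0\le r<l_n$, where the $q$ complete ``rounds'' replace the first $q+1$ subdivision pieces of every long interval of $\rho_{L,S}^{n}$ by their $\rho_{L,S}$-refinement, while the last $r$ points form one further ordered but partial round. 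The residue $r$ indexes an initial segment of the long-endpoint list of $\rho_{L,S}^{n}$, whose length obeys the recursions $l_n=Ll_{n-1}+s_{n-1}$, $s_n=Sl_{n-1}$; iterating the same peeling on $r$ decomposes $\{\xi_{L,S}^{1},\dots,\xi_{L,S}^{N}\}$, in $O(n)$ steps, into a disjoint union of $O(n)$ ``homogeneous blocks'', each of which is an affine image of a complete family of left endpoints of some $\rho_{L,S}^{k}$ (with $k\le n+1$) restricted to a union of a bounded number of intervals of a coarser partition.

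It remains to bound the contribution of a single homogeneous block at level $k$ to $\bigl|\sum_{j\le N}\chi_{[0,c[}(\xi_{L,S}^{j})-cN\bigr|$: since such a block is, up to an affine map, (part of) the left-endpoint set of $\rho_{L,S}^{k}$, its counting error is, after rescaling, controlled by a bounded multiple of $t_k\,D(\rho_{L,S}^{k})$ together with an $O(1)$ term accounting for the at most one interval of the block that straddles the endpoint $c$. Summing the $O(n)$ contributions and inserting the three level-by-level estimates above gives
\begin{equation*}
N\,D_N^{\ast}(\xi_{L,S})=O\!\left(n\cdot\sup_{1\le k\le n+1} t_k\,D(\rho_{L,S}^{k})\right),
\end{equation*}
that is $O(n)$, $O(n^{2})$ and $O\bigl(n\,t_n^{\,1-\gamma}\bigr)$ in the three regimes; since $n\asymp\log N$ and $t_n\asymp N$, this is exactly the claimed $O(\log N/N)$, $O(\log^{2}N/N)$ and $O(\log N/N^{\gamma})$, and one last application of $D_N\le 2D_N^{\ast}$ finishes the argument.

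The hardest part is the bookkeeping of the last two steps. One must set up the numeration system so that the number of homogeneous blocks is genuinely $O(n)$, and --- more delicately --- one must control the counting error of a \emph{partial} round, i.e.\ of a translated initial segment of the long-endpoint list of some $\rho_{L,S}^{k}$, which forces one to show that this segment is itself well distributed relative to $l_k$. This is where the recursions for $l_k,s_k$ and the self-similarity of the $\rho_{L,S}$-refinement have to be exploited quantitatively, and it is precisely the point at which the extra logarithmic factor over the partition discrepancy appears.
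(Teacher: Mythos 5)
This theorem is not proved in the thesis: it is recalled verbatim from the reference [Carbone], so there is no in-paper argument to compare yours against. Your overall plan --- reduce to $D_N^{\ast}$ via Theorem~\ref{thm:1.4.6}, observe that for $N=t_n$ the point set $\Lambda_{L,S}^{n}$ coincides with the left endpoints of $\rho_{L,S}^{n}$ so that $D_{t_n}=D(\rho_{L,S}^{n})$, and then handle general $N$ by peeling off the block structure of Definition~\ref{def:2.3.1} in a mixed-radix fashion --- is the natural one and is in the spirit of the cited proof. The arithmetic is also consistent: a full level contributes $(L+S-1)l_n=t_{n+1}-t_n$ points, the peeling terminates in $O(n)$ steps, and $n\asymp\log N$, $t_n\asymp N$ convert your three block-sum bounds into the three claimed rates.

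The genuine gap is the single sentence ``its counting error is, after rescaling, controlled by a bounded multiple of $t_k\,D(\rho_{L,S}^{k})$.'' That is the entire content of the theorem, and it does \emph{not} follow from the partition discrepancy theorem you invoke. A complete round $\varphi_i^{(n+1)}$ produces a translate of the left endpoints of the \emph{long} intervals of $\rho_{L,S}^{n}$ only --- $l_n$ points out of $t_n$ --- and a subset of a set with small discrepancy can have large discrepancy; what is needed is a separate quantitative statement that the long intervals of $\rho_{L,S}^{k}$ are themselves equidistributed among all intervals of $\rho_{L,S}^{k}$ (equivalently, that $\#\{\text{long intervals of }\rho_{L,S}^{k}\text{ contained in }[0,c[\}-c\,l_k$ is controlled), and likewise for a translated \emph{initial segment} of the ordered list $\Lambda_{L,S}^{k}$. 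You flag this in your last paragraph as ``the hardest part,'' but you do not supply the lemma or its proof; it requires setting up the counting functions for long and short intervals, exploiting the recursions $l_k=Ll_{k-1}+s_{k-1}$, $s_k=Sl_{k-1}$ and the spectral gap of the associated matrix, and it is precisely where the three regimes $S\lessgtr L+1$ and the exponent $\gamma$ actually enter. As written, the proposal is a correct roadmap with the central estimate asserted rather than proved.
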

\paragraph*{}
Per maggiore chiarezza, facciamo vedere come si ottengono le successioni di punti corrispondenti alle successioni di partizioni presentate negli Esempi \ref{es}.
\begin{es}
\label{es:2.3.2}
\rm{\textbf{1.} Analizziamo dapprima la successione di punti $\{\xi_{1,1}^{n}\}_{n\in\mathbb{N}}$. L'insieme di punti corrispondente alla partizione $\rho_{1,1}^{1}$ \`{e} costituito esattamente dagli estremi sinistri dei due intervalli $[0,\beta[, [\beta,1[$, cio\`{e}
\begin{equation*}
\Lambda_{1,1}^{1} = \Big(0,\beta\Big)=\Big(\xi_{1,1}^{1}, \xi_{1,1}^{2}\Big)\ .
\end{equation*}
A questo punto, osserviamo che in questo esempio non ci sono funzioni (\ref{funzioni_phi}) del tipo $\varphi_{L,j}^{n+1}$ in quanto abbiamo un solo intervallo corto e quindi $j=0$, ma solo la funzione $\varphi_1^{n+1}$. Poich\`{e} $l_1=1$, applicando la funzione $\varphi_1^{(2)}$ al punto zero si ha
\begin{equation*}
\Lambda_{1,1}^{2} = \Big(0,\beta,\varphi_1^{(2)}(0)\Big)=\Big(0,\beta,\beta^{2}\Big)=\Big(\xi_{1,1}^{1}, \xi_{1,1}^{2}, \xi_{1,1}^{3}\Big)\ .
\end{equation*}
Poich\`{e} $l_2=2$, applichiamo la funzione $\varphi_1^{(3)}$ ai punti $0$ e $\beta$, e precisamente
\begin{eqnarray*}
\Lambda_{1,1}^{3} &=& \Big(0,\beta, \beta^{2},\varphi_1^{(3)}(0),\varphi_1^{(3)}(\beta)\Big)\\
&=&\Big(0,\beta,\beta^{2},\beta^{3},\beta+\beta^{3}\Big)=\Big(\xi_{1,1}^{1}, \xi_{1,1}^{2}, \xi_{1,1}^{3}, \xi_{1,1}^{4}, \xi_{1,1}^{5}\Big)\ .
\end{eqnarray*}
Applicando poi la funzione $\varphi_1^{(4)}$ ai primi tre punti (perch\`{e} $l_3=3$), otteniamo $\Lambda_{1,1}^{4}$ di cui riportiamo il grafico che segue.\\

\begin{figure}[h!]
\begin{center}
\begin{tikzpicture}[scale=13]
\draw (0, 0) -- (1,0);
\draw (0,-0.01) node[below, black]{0} -- (0,0.01) node[above, black]{\scriptsize$1^\circ$};
\draw (1,-0.01) -- (1,0.01);
\draw (0.61803,-0.01) node[below, black]{\scriptsize$\beta$}-- (0.61803,0.01) node[above, black]{\scriptsize$2^\circ$};
\draw (0.38196,-0.01) node[below, black]{\scriptsize$\beta^{2}$}-- (0.38196,0.01) node[above, black]{\scriptsize$3^\circ$};
\draw (0.23606,-0.01) node[below, black]{\scriptsize$\beta^{3}$}-- (0.23606,0.01) node[above, black]{\scriptsize$4^\circ$};
\draw (0.85410,-0.01) node[below, black]{\scriptsize$\beta +\beta^{3}$}-- (0.85410,0.01) node[above, black]{\scriptsize$5^\circ$};
\draw (0.14589,-0.01) node[below, black]{\scriptsize$\beta^{4}$}-- (0.14589,0.01) node[above, black]{\scriptsize$6^\circ$};
\draw (0.76392,-0.01) node[below, black]{\scriptsize$\beta +\beta^{4}$}-- (0.76392,0.01) node[above, black]{\scriptsize$7^\circ$};
\draw (0.52785,-0.01) node[below, black]{\scriptsize$\beta +\beta^{3}$}-- (0.52785,0.01) node[above, black]{\scriptsize$8^\circ$};
\end{tikzpicture}
\end{center}
\caption{Primi 8 punti di $\{\xi_{1,1}^{n}\}_{n\in\mathbb{N}}$}\label{fig:11}
\end{figure}

Cos\`{i} continuando otteniamo la successione di punti $\{\xi_{1,1}^{n}\}_{n\in\mathbb{N}}$ associata alla successione di punti di partizioni $\{\rho_{1,1}^{n}\}_{n\in\mathbb{N}}$. \\ Questa successione di punti \`{e} stata chiamata in \cite{Carbone} \emph{successione di punti Kakutani-Fibonacci}, dal momento che discende dalla successione di partizioni di Kakutani-Fibonacci. 
\paragraph*{}
\textbf{2.} Consideriamo, adesso, la successione di punti $\{\xi_{2,1}^{n}\}_{n\in\mathbb{N}}$. Anche in questo caso non ci sono funzioni del tipo $\varphi_{L,j}^{(n+1)}$ da considerare, ma soltanto le funzioni del tipo $\varphi_i^{(n+1)}$, con $i=1, 2$, da applicare ai primi $l_n$ punti dell'insieme $\Lambda_{2,1}^{n}$. Il primo insieme di punti \`{e} 
\begin{equation*}
\Lambda_{2,1}^{1} = \Big(0,\beta,2\beta\Big)=\Big(\xi_{2,1}^{1}, \xi_{2,1}^{2}\Big)\ .
\end{equation*}
Poi, essendo $l_1=2$, se applichiamo le due funzioni $\varphi_1^{(2)}$ e $\varphi_2^{(2)}$ ai primi due punti di $\Lambda_{2,1}^{1}$ abbiamo
\begin{eqnarray*}
\Lambda_{2,1}^{2} &=& \Big(0,\beta,\varphi_1^{(2)}(0),\varphi_1^{(2)}(\beta),\varphi_2^{(2)}(0),\varphi_2^{(2)}(\beta)\Big)\\
&=&\Big(0,\beta,2\beta,\beta^{2},\beta+\beta^{2},2\beta^{2},\beta+2\beta^{2}\Big)\\
&=&\Big(\xi_{2,1}^{1}, \xi_{2,1}^{2}, \xi_{2,1}^{3}, \xi_{2,1}^{4}, \xi_{2,1}^{5}, \xi_{2,1}^{6}, \xi_{2,1}^{7}\Big)\ .
\end{eqnarray*}
Calcoliamo ancora un insieme di punti, quello relativo alla partizione $\rho_{2,1}^{3}$. 
Applicando le funzioni $\varphi_1^{(3)}$ e $\varphi_2^{(3)}$ ai primi cinque punti di $\Lambda_{2,1}^{2}$ (visto che $l_2=5$), si ha
\begin{eqnarray*}
\Lambda_{2,1}^{3} &=&\Big(0,\beta,2\beta,\beta^{2},\beta+\beta^{2},2\beta^{2},\beta+2\beta^{2},\\
& & \varphi_1^{(3)}(0),\varphi_1^{(3)}(\beta),\varphi_1^{(3)}(2\beta),\varphi_1^{(3)}(\beta^{2}),\varphi_1^{(3)}(\beta+\beta^{2}),\\
& & \varphi_2^{(3)}(0),\varphi_2^{(3)}(\beta),\varphi_2^{(3)}(2\beta),\varphi_2^{(3)}(\beta^{2}),\varphi_2^{(3)}(\beta+\beta^{2})\Big)\\
&=& \Big(0,\beta, 2\beta, \beta^{2},\beta+\beta^{2}, 2\beta^{2},\beta+2\beta^{2}, \\
& & \beta^{3},\beta +\beta^{3}, 2\beta+\beta^{3}, \beta^{2}+\beta^{3}, \beta+\beta^{2}+\beta^{3}, \\
& & 2\beta^{3},\beta+2\beta^{3},2\beta+2\beta^{3},\beta^{2}+2\beta^{3},\beta+\beta^{2}+2\beta^{3}\Big)\\
&=&\Big(\xi_{2,1}^{1}, \xi_{2,1}^{2}, \xi_{2,1}^{3}, \xi_{2,1}^{4}, \xi_{2,1}^{5}, \xi_{2,1}^{6}, \xi_{2,1}^{7}, \xi_{2,1}^{8},\\
&=& \xi_{2,1}^{9}, \xi_{2,1}^{10}, \xi_{2,1}^{11}, \xi_{2,1}^{12}, \xi_{2,1}^{13}, \xi_{2,1}^{14}, \xi_{2,1}^{15}, \xi_{2,1}^{16},\xi_{2,1}^{17}\Big) \ .
\end{eqnarray*}
Tali punti hanno la seguente rappresentazione grafica:

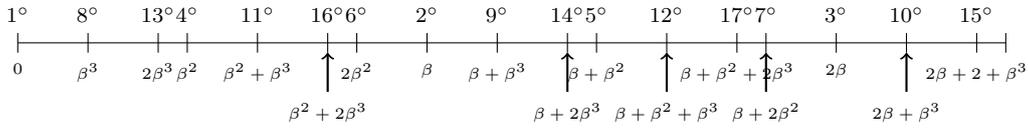
\begin{figure}[h!]
\begin{center}
\begin{tikzpicture}[scale=13]
\draw (0, 0) -- (1,0);
\draw (0,-0.01) node[below, black]{\tiny 0} -- (0,0.01) node[above, black]{\scriptsize $1^\circ$};
\draw (1,-0.01) -- (1,0.01);
\draw (0.41421,-0.01) node[below, black]{\tiny $\beta$}-- (0.41421,0.01) node[above, black]{\scriptsize $2^\circ$};
\draw (0.82842,-0.01) node[below, black]{\tiny $2\beta$}-- (0.82842,0.01) node[above, black]{\scriptsize $3^\circ$};
\draw (0.17157,-0.01) node[below, black]{\tiny $\beta^{2}$}-- (0.17157,0.01) node[above, black]{\scriptsize $4^\circ$};
\draw (0.58578,-0.01) node[below, black]{\tiny $\beta +\beta^{2}$}-- (0.58578,0.01) node[above, black]{\scriptsize $5^\circ$};
\draw (0.34314,-0.01) node[below, black]{\tiny $2\beta^{2}$}-- (0.34314,0.01) node[above, black]{\scriptsize $6^\circ$};
\draw (0.75735,-0.01) -- (0.75735,0.01) node[above, black]{\scriptsize $7^\circ$};
\draw [thick, ->] (0.75735,-0.05) node[below, black]{\tiny $\beta +2\beta^{2}$} -- (0.75735,-0.01);
\draw (0.07107,-0.01) node[below, black]{\tiny $\beta^{3}$}-- (0.07107,0.01) node[above, black]{\scriptsize $8^\circ$};
\draw (0.48528,-0.01) node[below, black]{\tiny $\beta +\beta^{3}$}-- (0.48528,0.01) node[above, black]{\scriptsize $9^\circ$};
\draw (0.89949,-0.01) -- (0.89949,0.01) node[above, black]{\scriptsize $10^\circ$};
\draw [thick, ->] (0.89949,-0.05) node[below, black]{\tiny $2\beta +\beta^{3}$} -- (0.89949,-0.01);
\draw (0.24264,-0.01) node[below, black]{\tiny $\beta^2+\beta^{3}$} -- (0.24264,0.01)node[above, black]{\scriptsize $11^\circ$} ;
\draw (0.65685,-0.01) -- (0.65685,0.01) node[above, black]{\scriptsize $12^\circ$};
\draw [thick, ->] (0.65685,-0.05) node[below, black]{\tiny $\beta +\beta^2+\beta^{3}$} -- (0.65685,-0.01);
\draw (0.14213,-0.01) node[below, black]{\tiny $2\beta^{3}$}-- (0.14213,0.01) node[above, black]{\scriptsize $13^\circ$};
\draw (0.55634,-0.01) -- (0.55634,0.01) node[above, black]{\scriptsize $14^\circ$};
\draw [thick, ->] (0.55634,-0.05) node[below, black]{\tiny $\beta +2\beta^{3}$} -- (0.55634,-0.01);
\draw (0.97055,-0.01) node[below, black]{\tiny $2\beta +2+\beta^{3}$}-- (0.97055,0.01) node[above, black]{\scriptsize $15^\circ$};
\draw (0.3137,-0.01) -- (0.3137,0.01) node[above, black]{\scriptsize $16^\circ$};
\draw [thick, ->] (0.3137,-0.05) node[below, black]{\tiny $\beta^2+2\beta^{3}$} -- (0.3137,-0.01);
\draw (0.72791,-0.01) node[below, black]{\tiny $\beta+\beta^2+2\beta^{3}$}-- (0.72791,0.01) node[above, black]{\scriptsize $17^\circ$};
\end{tikzpicture}
\end{center}
\caption{Primi 17 punti di $\{\xi_{2,1}^{n}\}_{n\in\mathbb{N}}$}\label{fig:21}
\end{figure}

Iterando questo processo riusciamo a determinare tutti i punti della successione $\{\xi_{2,1}^{n}\}_{n\in\mathbb{N}}$.
\paragraph*{}
\textbf{3.} Consideriamo infine l'esempio delle successioni di punti $\{\xi_{1,2}^{n}\}_{n\in\mathbb{N}}$ e determiniamo l'insieme di punti associato a ciascuna delle tre partizioni $\rho_{1,2}^{1}$, $\rho_{1,2}^{2}$ e $\rho_{1,2}^{3}$. Il primo insieme di punti si determina facilmente sempre considerando gli estremi sinistri degli intervalli di $\rho_{1,2}^{1}$, cio\`{e}
\begin{equation*}
\Lambda_{1,2}^{1}=\Big(0,\beta,\beta+\beta^{2}\Big)=\Big(\xi_{1,2}^{1}, \xi_{1,2}^{2},\xi_{1,2}^{3}\Big)\ .
\end{equation*} 
Poich\`{e} $l_1=1$, dobbiamo applicare le funzioni $\varphi_1^{(2)}$ e $\varphi_{1,1}^{(2)}$ solo al primo punto, e cio\`{e} allo zero. Per cui
\begin{equation*}
\Lambda_{1,2}^{2}=\Big(0,\beta,\beta+\beta^{2},\varphi_1^{(2)}(0),\varphi_{1,1}^{(2)}(0)\Big)
\end{equation*}
\begin{eqnarray*}
&=& \Big(0,\beta,\beta+\beta^{2},\beta^{2},\beta^{2}+\beta^{3}\Big)\\
&=& \Big(\xi_{1,2}^{1}, \xi_{1,2}^{2}, \xi_{1,2}^{3}, \xi_{1,2}^{4},\xi_{1,2}^{5}\Big)\ .
\end{eqnarray*} 
Poich\`{e} $l_2=3$, procedendo come nei casi precedenti si trova che
\begin{eqnarray*}
\Lambda_{1,2}^{3}&=& \Big(0,\beta,\beta+\beta^{2},\beta^{2},\beta^{2}+\beta^{3},\\
& & \varphi_1^{(3)}(0),\varphi_1^{(3)}(\beta), \varphi_1^{(3)}(\beta+\beta^{2}),\varphi_{1,1}^{(3)}(0),\varphi_{1,1}^{(3)}(\beta), \varphi_{1,1}^{(3)}(\beta+\beta^{2})\Big)\\
&=& \Big(0,\beta,\beta+\beta^{2},\beta^{2},\beta^{2}+\beta^{3},\beta^{3},\beta+\beta^{3}, \\
& & \beta+\beta^{2}+\beta^{3},\beta^{3}+\beta^{4}, \beta+\beta^{3}+\beta^{4}, \beta+\beta^{2}+\beta^{3}+\beta^{4}\Big)\\
&=& \Big(\xi_{1,2}^{1},\xi_{1,2}^{2},\xi_{1,2}^{3},\xi_{1,2}^{4},\xi_{1,2}^{5}, \xi_{1,2}^{6},\xi_{1,2}^{7},\xi_{1,2}^{8},\xi_{1,2}^{9},\xi_{1,2}^{10}, \xi_{1,2}^{11}\Big) \ .
\end{eqnarray*}
E cos via.
\\L'insieme di punti $\Lambda_{1,2}^{3}$ si pu rappresentare nel modo seguente:\\

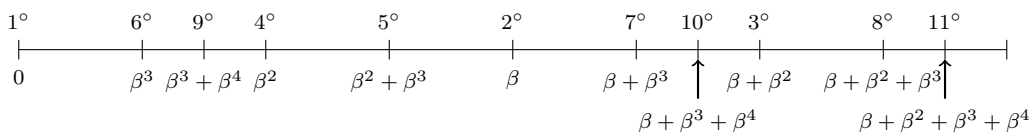
\begin{figure}[h!]
\begin{center}
\begin{tikzpicture}[scale=13]
\draw (0, 0) -- (1,0);
\draw (0,-0.01) node[below, black]{\scriptsize 0} -- (0,0.01) node[above, black]{\scriptsize $1^\circ$};
\draw (1,-0.01) -- (1,0.01);
\draw (0.5,-0.01) node[below, black]{\scriptsize $\beta$}-- (0.5,0.01) node[above, black]{\scriptsize $2^\circ$};
\draw (0.75,-0.01) node[below, black]{\scriptsize $\beta+\beta^{2}$}-- (0.75,0.01) node[above, black]{\scriptsize $3^\circ$};
\draw (0.25,-0.01) node[below, black]{\scriptsize $\beta^{2}$}-- (0.25,0.01) node[above, black]{\scriptsize $4^\circ$};
\draw (0.375,-0.01) node[below, black]{\scriptsize $\beta^2 +\beta^{3}$}-- (0.375,0.01) node[above, black]{\scriptsize $5^\circ$};
\draw (0.125,-0.01) node[below, black]{\scriptsize $\beta^{3}$}-- (0.125,0.01) node[above, black]{\scriptsize $6^\circ$};
\draw (0.625,-0.01) node[below, black]{\scriptsize $\beta +\beta^{3}$}-- (0.625,0.01) node[above, black]{\scriptsize $7^\circ$};
\draw (0.875,-0.01) node[below, black]{\scriptsize $\beta+\beta^2 +\beta^{3}$}-- (0.875,0.01) node[above, black]{\scriptsize $8^\circ$};
\draw (0.1875,-0.01) node[below, black]{\scriptsize$\beta^3 +\beta^{4}$}-- (0.1875,0.01) node[above, black]{\scriptsize $9^\circ$};
\draw (0.6875,-0.01) -- (0.6875,0.01) node[above, black]{\scriptsize $10^\circ$};
\draw [thick, ->] (0.6875,-0.05) node[below, black]{\scriptsize $\beta +\beta^{3}+\beta^4$} -- (0.6875,-0.01);
\draw (0.9375,-0.01) -- (0.9375,0.01)node[above, black]{\scriptsize $11^\circ$} ;
\draw [thick, ->] (0.9375,-0.05) node[below, black]{\scriptsize $\beta+\beta^2 +\beta^{3}+\beta^4$} -- (0.9375,-0.01) ;
\end{tikzpicture}
\end{center}
\caption{Primi 11 punti di $\{\xi_{1,2}^{n}\}_{n\in\mathbb{N}}$}\label{fig:12}
\end{figure}
}
\end{es}
Nel prossimo paragrafo presenteremo un metodo meno complesso per il calcolo dei punti associati alle $LS$-successioni di partizioni.
\section{Procedimento ``alla van der Corput''}
In questo paragrafo costruiamo un certo numero di funzioni definite sull'intervallo unitario, dipendenti da $L$ ed $S$, che ci permetteranno di determinare gli insiemi di punti $\Lambda_{L,S}^{n}$ e, di conseguenza, i punti di $\{\xi_{L,S}^{n}\}_{n\in\mathbb{N}}$ \cite{Carbone2}. Per descrivere la loro costruzione abbiamo bisogno di introdurre la definizione di rappresentazione di un numero rispetto ad una base e una nuova famiglia di funzioni sull'intervallo unitario.
\subsection*{Rappresentazione dei numeri naturali}
\label{rappresentazione}
\begin{defn}
\rm{La rappresentazione di un numero \`{e} detta \emph{posizionale} se esso \`{e} rappresentato da una sequenza di cifre il cui valore dipende dalla posizione all'interno della sequenza.}
\end{defn}
Il nostro sistema di numerazione \`{e} posizionale decimale, ossia utilizza una rappresentazione in base $10$, in cui il valore delle cifre, da $0$ a $9$, \`{e} determinato dalla posizione che assume nella sequenza, per cui $1234$ \`{e} diverso da $4321$.
\paragraph*{}
A questo punto, data l'importanza che la rappresentazione in base $b$ ricoprir\`{a} nel seguito della tesi, chiariamo come ottenere tale rappresentazione.
\begin{defn}[Numeri in base $b\geq 2$]
\rm{Siano $b\geq 2$ un intero ed $n$ un numero naturale. Per rappresentare $n$ in base $b$, si divide $n$ per $b$ e si pu\`{o} presentare uno dei tre casi: 
\begin{enumerate}
\item $n=0$ 
\item $n$ $<$ $b$
\item $n>b$\ .
\end{enumerate}
Se $n=0$, oppure $n<b$, abbiamo finito. Se, invece, $n>b$, siano $a_0$ e $q_0$, rispettivamente, il resto ed il quoziente della divisione di $n$ per $b$. Proseguiamo dividendo $q_0$ per $b$ e siano $a_1$ e $q_1$, il resto ed il quoziente della divisione di $q_0$ per $b$, rispettivamente. Iteriamo questo procedimento fino a che non troviamo un resto $a_M$ nullo e, di conseguenza, una successione di resti del tipo $a_0a_1\ldots a_{M-1}a_M$. Allora la rappresentazione di $n$ in base $b$ \`{e}
\begin{equation*}
[n]_b=a_Ma_{n-1}\ldots a_1a_0\ .\smallskip
\end{equation*}
Ovviamente, il numero $n$ in base dieci \`{e}
\begin{equation}
n=[n]_{10}=a_Mb^{M}+a_{M-1}b^{M-1}+\ldots +a_1b+a_0=\sum_{i=0}^{M}a_ib^{i}\ .
\end{equation}}
\end{defn}
Tenendo conto della definizione precedente, possiamo ora estendere la Definizione \ref{radinv} di funzione radice inversa rispetto ad una generica base $b$.
\begin{defn}
\rm{Siano $b\geq 2$ un intero ed $n \in \mathbb{N}$. Se la rappresentazione in base $b$ di $n$ \`{e} $[n]_b=a_Ma_{M-1}\ldots a_0$, allora la funzione $\phi_b:\mathbb{N}\longrightarrow [0,1]$, definita come
\begin{equation*}
\phi_b(n)=\sum_{i=0}^{M}\dfrac{a_i}{b^{i+1}}\ ,
\end{equation*}
\`{e} detta \emph{funzione radice inversa}, e la sua rappresentazione in base $b$ \`{e} \linebreak$[\phi_b(n)]_b=0.a_0\ldots a_{M-1}a_M$.}
\end{defn}
\begin{eso}
\rm{L'espressione del numero $70$ in base $3$ \`{e} $2121$. Infatti, \linebreak $70:3=23$ con resto $a_0=1$, $23:3=7$ con resto $a_2=2$, $7:3=2$ con resto $a_2=1$ e $2:3=0$ con resto $a_3=2$. Il resto successivo \`{e} nullo e quindi il processo si arresta con $a_3$.\\ Pertanto $[70]_3=2121$ e
\begin{equation*}
[70]_{10}=\sum_{i=0}^{3}a_i3^{i}=1\cdot3^{0}+2\cdot3^{1}+1\cdot3^{2}+2\cdot3^{3}\ .
\end{equation*}
Inoltre $$\phi_3(70)=\sum_{i=0}^{3}\dfrac{a_i}{3^{i+1}}=\frac{1}{3}+\frac{2}{3^{2}}+\frac{1}{3^{3}}+\frac{2}{3^{4}}$$ e $[\phi_3(70)]_3=0.1212$.
}

\end{eso}
Sappiamo che la successione di van der Corput si ottiene considerando la rappresentazione in base $2$ dei numeri naturali e invertendo l'ordine delle sue cifre (si veda Definizione \ref{radinv} e Definizione \ref{van der Corput}). Se poniamo $b=2$ e $\alpha=\frac{1}{2}$, si hanno le seguenti relazioni:

\begin{center}
\begin{tabular}{rlllrrlr}

\begin{footnotesize}
$0\rightarrow$
\end{footnotesize}&\begin{footnotesize}
$[0]_2$
\end{footnotesize}&\begin{footnotesize}
$=0$
\end{footnotesize}& \begin{footnotesize}
$\rightarrow$
\end{footnotesize}& \begin{footnotesize}
$0.0=$
\end{footnotesize}& \begin{footnotesize}
$[\phi_2(0)]_2\rightarrow$
\end{footnotesize}& \begin{footnotesize}
$0$
\end{footnotesize}& \begin{footnotesize}
$=\phi_2(0)$
\end{footnotesize}\\

\begin{footnotesize}
$1\rightarrow$
\end{footnotesize}&\begin{footnotesize}
$[1]_2$
\end{footnotesize}&\begin{footnotesize}
$=01$
\end{footnotesize}& \begin{footnotesize}
$\rightarrow$
\end{footnotesize}& \begin{footnotesize}
$0.10=$
\end{footnotesize}& \begin{footnotesize}
$[\phi_2(1)]_2\rightarrow$
\end{footnotesize}& \begin{footnotesize}
$\alpha$
\end{footnotesize}& \begin{footnotesize}
$=\phi_2(1)$
\end{footnotesize}\\

\begin{footnotesize}
$2\rightarrow$
\end{footnotesize}&\begin{footnotesize}
$[2]_2$
\end{footnotesize}&\begin{footnotesize}
$=10$
\end{footnotesize}& \begin{footnotesize}
$\rightarrow$
\end{footnotesize}& \begin{footnotesize}
$0.01=$
\end{footnotesize}& \begin{footnotesize}
$[\phi_2(2)]_2\rightarrow$
\end{footnotesize}& \begin{footnotesize}
$\alpha^{2}$
\end{footnotesize}& \begin{footnotesize}
$=\phi_2(2)$
\end{footnotesize}\\

\begin{footnotesize}
$3\rightarrow$
\end{footnotesize}&\begin{footnotesize}
$[3]_2$
\end{footnotesize}&\begin{footnotesize}
$=11$
\end{footnotesize}& \begin{footnotesize}
$\rightarrow$
\end{footnotesize}& \begin{footnotesize}
$0.11=$
\end{footnotesize}& \begin{footnotesize}
$[\phi_2(3)]_2\rightarrow$
\end{footnotesize}& \begin{footnotesize}
$\alpha+\alpha^{2}$
\end{footnotesize}& \begin{footnotesize}
$=\phi_2(3)$
\end{footnotesize}\\

\begin{footnotesize}
$4\rightarrow$
\end{footnotesize}&\begin{footnotesize}
$[4]_2$
\end{footnotesize}&\begin{footnotesize}
$=100$
\end{footnotesize}& \begin{footnotesize}
$\rightarrow$
\end{footnotesize}& \begin{footnotesize}
$0.001=$
\end{footnotesize}& \begin{footnotesize}
$[\phi_2(4)]_2\rightarrow$
\end{footnotesize}& \begin{footnotesize}
$\alpha^{3}$
\end{footnotesize}& \begin{footnotesize}
$=\phi_2(4)$
\end{footnotesize}\\

\begin{footnotesize}
$5\rightarrow$
\end{footnotesize}&\begin{footnotesize}
$[5]_2$
\end{footnotesize}&\begin{footnotesize}
$=101$
\end{footnotesize}& \begin{footnotesize}
$\rightarrow$
\end{footnotesize}& \begin{footnotesize}
$0.101=$
\end{footnotesize}& \begin{footnotesize}
$[\phi_2(5)]_2\rightarrow$
\end{footnotesize}& \begin{footnotesize}
$\alpha+\alpha^{3}$
\end{footnotesize}& \begin{footnotesize}
$=\phi_2(5)$
\end{footnotesize}\\

\begin{footnotesize}
$6\rightarrow$
\end{footnotesize}&\begin{footnotesize}
$[6]_2$
\end{footnotesize}&\begin{footnotesize}
$=110$
\end{footnotesize}& \begin{footnotesize}
$\rightarrow$
\end{footnotesize}& \begin{footnotesize}
$0.011=$
\end{footnotesize}& \begin{footnotesize}
$[\phi_2(6)]_2\rightarrow$
\end{footnotesize}& \begin{footnotesize}
$\alpha^{2}+\alpha^{3}$
\end{footnotesize}& \begin{footnotesize}
$=\phi_2(6)$
\end{footnotesize}\\

\begin{footnotesize}
$7\rightarrow$
\end{footnotesize}&\begin{footnotesize}
$[7]_2$
\end{footnotesize}&\begin{footnotesize}
$=111$
\end{footnotesize}& \begin{footnotesize}
$\rightarrow$
\end{footnotesize}& \begin{footnotesize}
$0.111=$
\end{footnotesize}& \begin{footnotesize}
$[\phi_2(7)]_2\rightarrow$
\end{footnotesize}& \begin{footnotesize}
$\alpha+\alpha^{2}+\alpha^{3}$
\end{footnotesize}& \begin{footnotesize}
$=\phi_2(7)$
\end{footnotesize}\\

\begin{footnotesize}
$8\rightarrow$
\end{footnotesize}&\begin{footnotesize}
$[8]_2$
\end{footnotesize}&\begin{footnotesize}
$=1000$
\end{footnotesize}& \begin{footnotesize}
$\rightarrow$
\end{footnotesize}& \begin{footnotesize}
$0.0001=$
\end{footnotesize}& \begin{footnotesize}
$[\phi_2(8)]_2\rightarrow$
\end{footnotesize}& \begin{footnotesize}
$\alpha^{4}$
\end{footnotesize}& \begin{footnotesize}
$=\phi_2(8)$
\end{footnotesize}\\

\begin{footnotesize}
$9\rightarrow$
\end{footnotesize}&\begin{footnotesize}
$[9]_2$
\end{footnotesize}&\begin{footnotesize}
$=1001$
\end{footnotesize}& \begin{footnotesize}
$\rightarrow$
\end{footnotesize}& \begin{footnotesize}
$0.1001=$
\end{footnotesize}& \begin{footnotesize}
$[\phi_2(9)]_2\rightarrow$
\end{footnotesize}& \begin{footnotesize}
$\alpha+\alpha^{4}$
\end{footnotesize}& \begin{footnotesize}
$=\phi_2(9)$
\end{footnotesize}\\

\begin{footnotesize}
$10\rightarrow$
\end{footnotesize}&\begin{footnotesize}
$[10]_2$
\end{footnotesize}&\begin{footnotesize}
$=1010$
\end{footnotesize}& \begin{footnotesize}
$\rightarrow$
\end{footnotesize}& \begin{footnotesize}
$0.0101=$
\end{footnotesize}& \begin{footnotesize}
$[\phi_2(10)]_2\rightarrow$
\end{footnotesize}& \begin{footnotesize}
$\alpha^{2}+\alpha^{4}$
\end{footnotesize}& \begin{footnotesize}
$=\phi_2(10)$
\end{footnotesize}\\
\end{tabular}
\end{center}

\`{E} interessante e utile notare a questo punto che tale successione si ottiene riordinando i punti determinati dalla successione di partizioni di Kakutani per $\alpha=\frac{1}{2}$ (o, altrimenti detto, la successione di partizioni binaria).\medskip \\
\`{E} immediato osservare che le potenze di $\alpha$ dipendono dalla posizione decimale e questo ci permette di conoscere l'$n$-simo termine della successione di van der Corput utilizzando $[n]_2$. \\ Questa continua ad essere un'osservazione valida anche se consideriamo basi diverse da $2$.\\ A questo punto ci chiediamo se esista una corrispondenza simile per i punti delle successioni $\{\xi_{L,S}^{n}\}_{n\in\mathbb{N}}$, e cio\`{e} tra le potenze di $\beta$, dove $L\beta+S\beta^{2}=1$, e la rappresentazione in qualche base. Scopriremo che in un certo senso \`{e} cos\`{i}. \\ Partiamo dagli esempi gi\`{a} considerati nel paragrafo precedente, e vediamo come riottenere i punti delle successioni attraverso l'introduzione di nuove funzioni.
\begin{eso}
\label{Esempio2.4.1}
\rm{In questo esempio prendiamo in considerazione la successione di punti di Kakutani-Fibonacci $\{\xi_{1,1}^{n}\}_{n\in\mathbb{N}}$. Partiamo dalla successione di partizioni di Kakutani-Fibonacci $\{\rho_{1,1}^{n}\}_{n\in\mathbb{N}}$ e definiamo le seguenti funzioni
\begin{eqnarray}
&\psi_0 : [0,1[\longrightarrow [0,\beta[\\
& x \mapsto \beta x \nonumber
\end{eqnarray}
e
\begin{eqnarray}
&\psi_1 : [0,\beta[\longrightarrow [\beta ,1[ \\
& x \mapsto \beta x + \beta \ . \nonumber
\end{eqnarray}
Vedremo immediatamente che tutti i punti della successione $\{\xi_{1,1}^{n}\}_{n\in\mathbb{N}}$ si ottengono attraverso composizioni successive di $\psi_0$ e $\psi_1$.\\
Osserviamo che le composizioni possibili tra le due funzioni sono $\psi_0\circ \psi_0$, $\psi_0\circ \psi_1$ e $\psi_1\circ \psi_0$, mentre $\psi_1\circ \psi_1$ non \`{e} consentita. Indicheremo con $\psi_{ij}$ le composizioni $\psi_i\circ \psi_j$. Osserviamo inoltre che l'insieme dei punti relativi alla prima partizione \`{e} dato da
\begin{equation*}
\Lambda_{1,1}^{1}=\Big(\xi_{1,1}^{1}, \xi_{1,1}^{2}\Big)=\Big(\psi_0(0),\psi_1(0)\Big)=\big(0,\beta\big) \ .
\end{equation*}
Applicando queste funzioni (ovviamente quelle possibili) ai punti trovati e osservando che $\psi_{i0}(0)=\psi_i(0)$ per ogni $i=0,1$, si ha proprio
\begin{eqnarray*}
\Lambda_{1,1}^{2}&=&\Big(\xi_{1,1}^{1}, \xi_{1,1}^{2}, \xi_{1,1}^{3}\Big)=\Big(\psi_0(0),\psi_1(0), \psi_{01}(0)\Big) \\
&=& \Big(\psi_{00}(0), \psi_{10}(0), \psi_{01}(0)\Big)\\
&=& \Big(0, \beta, \beta^{2}\Big)\ .
\end{eqnarray*}
Iterando nuovamente questo procedimento otteniamo

\begin{equation*}
\Lambda_{1,1}^{3} = \Big(\xi_{1,1}^{1}, \xi_{1,1}^{2}\xi_{1,1}^{3}, \xi_{1,1}^{4},\xi_{1,1}^{5}\Big)=\Big(\psi_0(0),\psi_1(0), \psi_{01}(0), \psi_{001}(0),\psi_{101}(0)\Big)
\end{equation*}
\begin{eqnarray*}
&=& \Big(\psi_{000}(0), \psi_{100}(0), \psi_{010}(0), \psi_{001}(0), \psi_{101}(0)\Big)\\
&=& \Big(0, \beta, \beta^{2}, \beta^{3}, \beta + \beta^{3}\Big)\ .
\end{eqnarray*}
Si vede chiaramente come la composizione ricorsiva delle funzioni definite in precedenza, che dipendono esclusivamente da $L$ ed $S$, dia luogo alle successioni di punti associate alle $LS$-successioni di partizioni, ma la caratteristica sorprendente consiste soprattutto nell'ordinamento dei punti, i quali presentano gi\`{a} un ordinamento ``alla van der Corput''. Questo appare particolarmente evidente se consideriamo l'insieme di punti successivo
\begin{eqnarray*}
\Lambda_{1,1}^{4}&=& \Big(\xi_{1,1}^{1}, \xi_{1,1}^{2}\xi_{1,1}^{3}, \xi_{1,1}^{4},\xi_{1,1}^{5},\xi_{1,1}^{6},\xi_{1,1}^{7},\xi_{1,1}^{8}\Big)\\
&=& \Big(\psi_0(0),\psi_1(0), \psi_{01}(0), \psi_{001}(0), \psi_{101}(0),\\
& &\psi_{0001}(0), \psi_{1001}(0), \psi_{0101}(0)\Big)\\
&=& \Big(\psi_{0000}(0), \psi_{1000}(0), \psi_{0100}(0), \psi_{0010}(0), \\
& & \psi_{1010}(0), \psi_{0001}(0), \psi_{1001}(0), \psi_{0101}(0)\Big)\\
&=& \Big(0, \beta, \beta^{2}, \beta^{3}, \beta + \beta^{3}, \beta^{4}, \beta + \beta^{4}, \beta^{2} + \beta^{4}\Big)\ .
\end{eqnarray*}
Cos\`{i} continuando si ottiene la successione di punti $\{\xi_{1,1}^{n}\}_{n\in\mathbb{N}}$.
\\A questo punto si tratta di osservare che esiste la seguente relazione tra le potenze di $\beta$ e la rappresentazione binaria dei numeri naturali:

\begin{center}
\begin{tabular}{rlcrrlcrl} 
\begin{footnotesize}
$0\longrightarrow$
\end{footnotesize}&
\begin{footnotesize}
$[0]_2$
\end{footnotesize}&
\begin{footnotesize}
$=$
\end{footnotesize}&\begin{footnotesize}
$00 \longrightarrow$
\end{footnotesize}&\begin{footnotesize}
$0.00$
\end{footnotesize}&\begin{footnotesize}
$=$
\end{footnotesize}&
\begin{footnotesize}
$[\phi_2(0)]_2$
\end{footnotesize}&
\begin{footnotesize}
$\longrightarrow$
\end{footnotesize}&
\begin{footnotesize}
$0$
\end{footnotesize}\\

\begin{footnotesize}
$1\longrightarrow$
\end{footnotesize}&
\begin{footnotesize}
$[1]_2$\end{footnotesize}&\begin{footnotesize}
$=$
\end{footnotesize}&
\begin{footnotesize}
$01$ $\longrightarrow$
\end{footnotesize}&
\begin{footnotesize}
$0.10$
\end{footnotesize}& \begin{footnotesize}
$=$
\end{footnotesize}&
\begin{footnotesize}
$[\phi_2(1)]_2$
\end{footnotesize}&\begin{footnotesize}
$\longrightarrow$
\end{footnotesize}&
\begin{footnotesize}
$\beta$
\end{footnotesize}\\

\begin{footnotesize}
$2\longrightarrow$
\end{footnotesize}&
\begin{footnotesize}
$[2]_2$\end{footnotesize}&\begin{footnotesize}
$=$
\end{footnotesize}&
\begin{footnotesize}
$10$ $\longrightarrow$
\end{footnotesize}&
\begin{footnotesize}
$0.01$\end{footnotesize}&\begin{footnotesize}
$=$
\end{footnotesize}&
\begin{footnotesize}
$\phi_2(2)]_2$
\end{footnotesize}&
\begin{footnotesize}
$\longrightarrow$
\end{footnotesize}&
\begin{footnotesize}
$\beta^{2}$
\end{footnotesize}\\

\begin{footnotesize}
$3\longrightarrow$
\end{footnotesize}&
\begin{footnotesize}
$[3]_2$\end{footnotesize}&\begin{footnotesize}
$=$
\end{footnotesize}&
\begin{footnotesize}
$11$ $\longrightarrow$
\end{footnotesize}&
\begin{footnotesize}
$0.11$
\end{footnotesize}& \begin{footnotesize}
$=$
\end{footnotesize}&
\begin{footnotesize}
$[\phi_2(3)]_2$
\end{footnotesize}&
\begin{footnotesize}
$\longrightarrow$
\end{footnotesize}&
\begin{footnotesize}
nessun $\beta$
\end{footnotesize}\\

\begin{footnotesize}
$4\longrightarrow$
\end{footnotesize}&
\begin{footnotesize}
$[4]_2$\end{footnotesize}&\begin{footnotesize}
=
\end{footnotesize}&
\begin{footnotesize}
$100$ $\longrightarrow$
\end{footnotesize}&
\begin{footnotesize}
$0.01$
\end{footnotesize}& \begin{footnotesize}
$=$
\end{footnotesize}&
\begin{footnotesize}
$[\phi_2(4)]_2$
\end{footnotesize}&
\begin{footnotesize}
$\longrightarrow$
\end{footnotesize}&
\begin{footnotesize}
$\beta^{3}$
\end{footnotesize}\\

\begin{footnotesize}
$5\longrightarrow$
\end{footnotesize}&
\begin{footnotesize}
$[5]_2$\end{footnotesize}&\begin{footnotesize}
=
\end{footnotesize}&
\begin{footnotesize}
$101$ $\longrightarrow$
\end{footnotesize}&
\begin{footnotesize}
$0.101$
\end{footnotesize}& \begin{footnotesize}
$=$
\end{footnotesize}&
\begin{footnotesize}
$[\phi_2(5)]_2$
\end{footnotesize}&
\begin{footnotesize}
$\longrightarrow$
\end{footnotesize}&
\begin{footnotesize}
$\beta+\beta^{3}$
\end{footnotesize}\\

\begin{footnotesize}
$6\longrightarrow$
\end{footnotesize}&
\begin{footnotesize}
$[6]_2$\end{footnotesize}&\begin{footnotesize}
=
\end{footnotesize}&
\begin{footnotesize}
$110$ $\longrightarrow$
\end{footnotesize}&
\begin{footnotesize}
$0.011$
\end{footnotesize}& \begin{footnotesize}
$=$
\end{footnotesize}&
\begin{footnotesize}
$[\phi_2(6)]_2$
\end{footnotesize}&
\begin{footnotesize}
$\longrightarrow$
\end{footnotesize}&
\begin{footnotesize}
nessun $\beta$
\end{footnotesize}\\

\begin{footnotesize}
$7\longrightarrow$
\end{footnotesize}&
\begin{footnotesize}
$[7]_2$\end{footnotesize}&\begin{footnotesize}
=
\end{footnotesize}&
\begin{footnotesize}
$111$ $\longrightarrow$
\end{footnotesize}&
\begin{footnotesize}
$0.111$
\end{footnotesize}& \begin{footnotesize}
$=$
\end{footnotesize}&
\begin{footnotesize}
$[\phi_2(7)]_2$
\end{footnotesize}&
\begin{footnotesize}
$\longrightarrow$
\end{footnotesize}&
\begin{footnotesize}
nessun $\beta$
\end{footnotesize}\\

\begin{footnotesize}
$8\longrightarrow$
\end{footnotesize}&
\begin{footnotesize}
$[8]_2$\end{footnotesize}&\begin{footnotesize}
=
\end{footnotesize}&
\begin{footnotesize}
$1000$ $\longrightarrow$
\end{footnotesize}&
\begin{footnotesize}
$0.0001$
\end{footnotesize}& \begin{footnotesize}
$=$
\end{footnotesize}&\begin{footnotesize}
$[\phi_2(8)]_2$
\end{footnotesize}&
\begin{footnotesize}
$\longrightarrow$
\end{footnotesize}&
\begin{footnotesize}
$\beta^{4}$
\end{footnotesize}\\

\begin{footnotesize}
$9\longrightarrow$
\end{footnotesize}&
\begin{footnotesize}
$[9]_2$\end{footnotesize}&\begin{footnotesize}
=
\end{footnotesize}&
\begin{footnotesize}
$1001$ $\longrightarrow$
\end{footnotesize}&
\begin{footnotesize}
$0.1001$
\end{footnotesize}& \begin{footnotesize}
$=$
\end{footnotesize}&
\begin{footnotesize}
$[\phi_2(9)]_2$
\end{footnotesize}&
\begin{footnotesize}
$\longrightarrow$
\end{footnotesize}&
\begin{footnotesize}
$\beta+\beta^{4}$
\end{footnotesize}\\

\begin{footnotesize}
$10\longrightarrow$
\end{footnotesize}&
\begin{footnotesize}
$[10]_2$\end{footnotesize}&\begin{footnotesize}
=
\end{footnotesize}&
\begin{footnotesize}
$1010$ $\longrightarrow$
\end{footnotesize}&
\begin{footnotesize}
$0.0101$
\end{footnotesize}& \begin{footnotesize}
$=$
\end{footnotesize}&
\begin{footnotesize}
$[\phi_2(10)]_2$
\end{footnotesize}&
\begin{footnotesize}
$\longrightarrow$
\end{footnotesize}&
\begin{footnotesize}
$\beta^{2}+\beta^{4}$
\end{footnotesize}\\

\end{tabular}
\end{center}
e cos\`{i} via. }
\end{eso}
\begin{ossi}
\label{Oss:2.4.2}
\rm{\textbf{1.} La $1,1$-successione di punti $\{\xi_{1,1}^{n}\}_{n\in\mathbb{N}}$ presenta una forte analogia con la costruzione della successione di van der Corput, anche se ad alcuni numeri espressi in forma binaria non corrisponde alcuna potenza di $\beta$. Questo non ci deve meravigliare, se si considera da una parte che solo le rappresentazioni binarie corrispondenti alle sole composizioni possibili tra $\psi_0$ e $\psi_1$ sono quelle a cui corrispondono potenze di $\beta$, dall'altra che nella costruzione di van der Corput al passo $n$ vengono divisi tutti i $2^{n}$ intervalli, e dunque si raddoppiano i punti (mentre per le $LS$-successioni si dividono solo gli intervalli lunghi).
\paragraph*{}
\textbf{2.} Infine, attraverso questa procedura ``alla van der Corput'', siamo in grado di determinare i punti della successione $\{\xi_{1,1}^{n}\}_{n\in\mathbb{N}}$ svincolandoci dagli insiemi $\Lambda_{1,1}^{n}$. Precisamente, i vantaggi legati all'introduzione di queste due funzioni $\psi_0$ e $\psi_1$ sono due. Il primo \`{e} che per determinare i punti associati all'$n$-esima partizione $\rho_{1,1}^{n}$ non \`{e} pi\`{u} necessario riordinare i punti di $\rho_{1,1}^{n-1}$ perch\`{e} essi si ottengono scrivendo tutte le $n$-uple ordinate di elementi dell'insieme $\{0,1\}$ (che sono $2^{n}$), cancellando le sequenze di $n$ cifre che presentano due cifre consecutive pari a $1$. \\Questo fatto si vede bene in $\Lambda_{1,1}^{2}$, $\Lambda_{1,1}^{3}$ e $\Lambda_{1,1}^{4}$.\\ L'altro vantaggio \`{e} che, mentre nella Definizione \ref{def:2.3.1} e nell'Esempio \ref{es:2.3.2} ogni insieme $\Lambda_{1,1}^{n}$ \`{e} ottenuto attraverso funzioni $\varphi_1^{(n)}$ dipendenti da $n$, in questo caso esistono per tutti gli $n$ solo le funzioni $\psi_0$ e $\psi_1$ (indipendenti da $n$), e ogni punto si ottiene attraverso composizioni successive di queste due funzioni.
\paragraph*{}
\textbf{3.} La $1,1$-successione di punti $\{\xi_{1,1}^{n}\}_{n\in\mathbb{N}}$ pu\`{o} essere ottenuta utilizzando la rappresentazione binaria dei numeri naturali, invertendo l'ordine delle sue cifre (ed ottenendo, cio\`{e}, la rappresentazione binaria della funzione radice inversa) ed eliminando le rappresentazioni che presentano due cifre consecutive pari ad $1$. Inoltre, i coefficienti delle potenze di $\beta$ sono esattamente quelli che compaiono nella rappresentazione binaria di $\phi_2$, come nel caso della successione di van der Corput. Notiamo, inoltre, che c'\`{e} una corrispondenza biunivoca tra i coefficienti delle potenze di $\beta$ e la sua rappresentazione attraverso composizioni successive delle funzioni $\psi_0$ e $\psi_1$. Se consideriamo, ad esempio, $\Lambda_{1,1}^{4}$, vediamo che $\psi_0$, $\psi_1$, $\psi_{01}$ $\psi_{001}$, $\psi_{101}$, $\psi_{0001}$, $\psi_{1001}$ corrispondono rispettivamente a $0.00$, $0.10$, $0.01$, $0.001$, $0.101$, $0.0001$, $0.1001$. E le cancellazioni delle composizioni non lecite (e cio\`{e} tutte quelle che usano $\psi_{1,1}$) corrispondono esattamente (nell'ordine giusto) alle cancellazioni dei valori di $[\phi_2(\cdot)]_2$ le cui sequenze di cifre contengono la coppia $11$.}
\end{ossi}
Passiamo ora agli altri due esempi, prima di fornire la regola generale.
\begin{eso}
\label{Esempio2.4.2}
\rm{Consideriamo la successione di punti $\{\xi_{2,1}^{n}\}_{n\in\mathbb{N}}$ partendo dalla successione di partizioni $\{\rho_{2,1}^{n}\}_{n\in\mathbb{N}}$ ottenuta per $L=2$, $S=1$. Definiamo tre funzioni sui sottointervalli di $[0,1]$ nel modo seguente
\begin{eqnarray}
&\psi_0 : [0,1[\longrightarrow [0,\beta[ \\
& x \mapsto \beta x \nonumber \ ,
\end{eqnarray}
\begin{eqnarray}
&\psi_1 : [0,1[\longrightarrow [\beta ,2\beta[ \\
& x \mapsto \beta x + \beta \nonumber
\end{eqnarray}
e
\begin{eqnarray}
&\psi_2 : [0,\beta[\longrightarrow [2\beta ,1[ \\
& x \mapsto \beta x + 2\beta \ . \nonumber
\end{eqnarray}
In questo caso, ovviamente, le uniche composizioni proibite sono $\psi_2\circ \psi_1$ e $\psi_2\circ \psi_2$. Valutiamo $\psi_0$, $\psi_1$ e $\psi_2$ nello zero e nuovamente otteniamo l'insieme dei punti associati alla prima partizione $\rho_{2,1}^{1}$:
\begin{equation*}
\Lambda_{2,1}^{1}=\Big(\xi_{2,1}^{1},\xi_{2,1}^{2},\xi_{2,1}^{3}\Big) =\Big(\psi_0(0),\psi_1(0),\psi_2(0)\Big)=\Big(0,\beta,2\beta\Big)\ .
\end{equation*}
Calcolando, ove possibile, le funzioni $(2.9)$, $(2.10)$ e $(2.11)$ nei punti ottenuti, ossia componendo tra loro queste funzioni, come nel caso della successione di punti di Kakutani-Fibonacci, si ottiene l'insieme dei punti relativi alla partizione successiva, cio\`{e} 
\begin{eqnarray*}
\Lambda_{2,1}^{2}&=& \Big(\xi_{2,1}^{1},\xi_{2,1}^{2},\xi_{2,1}^{3},\xi_{2,1}^{4},\xi_{2,1}^{5},\xi_{2,1}^{6},\xi_{2,1}^{7}\Big) \\
&=&\Big(\psi_0(0),\psi_1(0),\psi_2(0), \psi_{01}(0), \psi_{11}(0), \psi_{02}(0), \psi_{12}(0) \Big) \\
&=& \Big(\psi_{00}(0), \psi_{10}(0), \psi_{20}(0), \psi_{01}(0), \psi_{11}(0), \psi_{02}(0), \psi_{12}(0)\Big)\\
&=& \Big(0,\beta, 2\beta, \beta^{2},\beta+\beta^{2}, 2\beta^{2},\beta+2\beta^{2}\Big)\ .
\end{eqnarray*}
Anche in questo caso, come nell'esempio precedente, si ha $\psi_{i0}=\psi_i(0)$ per ogni $i=0,1,2$.
Iterando un'ultima volta questa procedura si ha
\begin{equation*}
\Lambda_{2,1}^{3}=\Big(\xi_{2,1}^{1},\xi_{2,1}^{2},\xi_{2,1}^{3},\xi_{2,1}^{4},\xi_{2,1}^{5},\xi_{2,1}^{6},\xi_{2,1}^{7},\xi_{2,1}^{8}, \Big.
\end{equation*}
\begin{eqnarray*}
& & \big.\xi_{2,1}^{9},\xi_{2,1}^{10},\xi_{2,1}^{11},\xi_{2,1}^{12},\xi_{2,1}^{15},\xi_{2,1}^{16},\xi_{2,1}^{17}\Big) \\
&=&\Big(\psi_{0}(0), \psi_{1}(0), \psi_{2}(0), \psi_{01}(0), \psi_{11}(0), \psi_{02}(0), \\
& & \psi_{12}(0),\psi_{001}(0),\psi_{101}(0), \psi_{011}(0), \psi_{111}(0),\\
& & \psi_{002}(0), \psi_{102}(0), \psi_{012}(0), \psi_{112}(0)\Big)\\
&=& \Big(\psi_{000}(0), \psi_{100}(0), \psi_{200}(0), \psi_{010}(0), \psi_{110}(0), \psi_{020}(0), \\
& & \psi_{120}(0), \psi_{001}(0), \psi_{101}(0), \psi_{201}(0), \psi_{011}(0), \psi_{111}(0), \\
& & \psi_{002}(0),\psi_{102}(0), \psi_{202}(2), \psi_{012}(0) ,\psi_{112}(0) \Big)\\
&=& \Big(0,\beta, 2\beta, \beta^{2},\beta+\beta^{2}, 2\beta^{2},\beta+2\beta^{2}, \beta^{3},\\
& & \beta +\beta^{3}, 2\beta+\beta^{3}, \beta^{2}+\beta^{3}, \beta+\beta^{2}+\beta^{3}, \\
& & 2\beta^{3},\beta+2\beta^{3},2\beta+2\beta^{3},\beta^{2}+2\beta^{3},\beta+\beta^{2}+2\beta^{3}\Big)\ ,
\end{eqnarray*}
e cos\`{i} via si ottiene $\{\xi_{2,1}^{n}\}_{n\in\mathbb{N}}$.\\ I punti ottenuti da composizioni ricorsive di queste funzioni risultano essere disposti secondo l'ordine giusto.\\ Vogliamo vedere quale relazione intercorre tra queste funzioni e la rappresentazione dei numeri naturali in base $3$ e se \`{e} lecito supporre che le composizioni non consentite corrispondano a rappresentazioni di numeri naturali che non danno luogo ad alcun punto della successione. Infatti
\begin{center}
\begin{tabular}{rlcrrlcrl} 
\begin{footnotesize}
$0\longrightarrow$
\end{footnotesize}&
\begin{footnotesize}
$[0]_3$
\end{footnotesize}&
\begin{footnotesize}
$=$
\end{footnotesize}&\begin{footnotesize}
$00 \longrightarrow$
\end{footnotesize}&\begin{footnotesize}
$0.00$
\end{footnotesize}&\begin{footnotesize}
$=$
\end{footnotesize}&
\begin{footnotesize}
$[\phi_3(0)]_3$
\end{footnotesize}&
\begin{footnotesize}
$\longrightarrow$
\end{footnotesize}&
\begin{footnotesize}
$0$
\end{footnotesize}\\

\begin{footnotesize}
$1\longrightarrow$
\end{footnotesize}&
\begin{footnotesize}
$[1]_3$\end{footnotesize}&\begin{footnotesize}
$=$
\end{footnotesize}&
\begin{footnotesize}
$01$ $\longrightarrow$
\end{footnotesize}&
\begin{footnotesize}
$0.10$
\end{footnotesize}& \begin{footnotesize}
$=$
\end{footnotesize}&
\begin{footnotesize}
$[\phi_3(1)]_3$
\end{footnotesize}&\begin{footnotesize}
$\longrightarrow$
\end{footnotesize}&
\begin{footnotesize}
$\beta$
\end{footnotesize}\\

\begin{footnotesize}
$2\longrightarrow$
\end{footnotesize}&
\begin{footnotesize}
$[2]_3$\end{footnotesize}&\begin{footnotesize}
$=$
\end{footnotesize}&
\begin{footnotesize}
$02$ $\longrightarrow$
\end{footnotesize}&
\begin{footnotesize}
$0.20$\end{footnotesize}&\begin{footnotesize}
$=$
\end{footnotesize}&
\begin{footnotesize}
$\phi_3(2)]_3$
\end{footnotesize}&
\begin{footnotesize}
$\longrightarrow$
\end{footnotesize}&
\begin{footnotesize}
$2\beta$
\end{footnotesize}\\

\begin{footnotesize}
$3\longrightarrow$
\end{footnotesize}&
\begin{footnotesize}
$[3]_3$\end{footnotesize}&\begin{footnotesize}
$=$
\end{footnotesize}&
\begin{footnotesize}
$10$ $\longrightarrow$
\end{footnotesize}&
\begin{footnotesize}
$0.01$
\end{footnotesize}& \begin{footnotesize}
$=$
\end{footnotesize}&
\begin{footnotesize}
$[\phi_3(3)]_3$
\end{footnotesize}&
\begin{footnotesize}
$\longrightarrow$
\end{footnotesize}&
\begin{footnotesize}
$\beta^{2}$
\end{footnotesize}\\

\begin{footnotesize}
$4\longrightarrow$
\end{footnotesize}&
\begin{footnotesize}
$[4]_3$\end{footnotesize}&\begin{footnotesize}
=
\end{footnotesize}&
\begin{footnotesize}
$11$ $\longrightarrow$
\end{footnotesize}&
\begin{footnotesize}
$0.11$
\end{footnotesize}& \begin{footnotesize}
$=$
\end{footnotesize}&
\begin{footnotesize}
$[\phi_3(4)]_3$
\end{footnotesize}&
\begin{footnotesize}
$\longrightarrow$
\end{footnotesize}&
\begin{footnotesize}
$\beta+\beta^{2}$
\end{footnotesize}\\

\begin{footnotesize}
$5\longrightarrow$
\end{footnotesize}&
\begin{footnotesize}
$[5]_3$\end{footnotesize}&\begin{footnotesize}
=
\end{footnotesize}&
\begin{footnotesize}
$12$ $\longrightarrow$
\end{footnotesize}&
\begin{footnotesize}
$0.21$
\end{footnotesize}& \begin{footnotesize}
$=$
\end{footnotesize}&
\begin{footnotesize}
$[\phi_3(5)]_3$
\end{footnotesize}&
\begin{footnotesize}
$\longrightarrow$
\end{footnotesize}&
\begin{footnotesize}
nessun $\beta$
\end{footnotesize}\\

\begin{footnotesize}
$6\longrightarrow$
\end{footnotesize}&
\begin{footnotesize}
$[6]_3$\end{footnotesize}&\begin{footnotesize}
=
\end{footnotesize}&
\begin{footnotesize}
$20$ $\longrightarrow$
\end{footnotesize}&
\begin{footnotesize}
$0.02$
\end{footnotesize}& \begin{footnotesize}
$=$
\end{footnotesize}&
\begin{footnotesize}
$[\phi_3(6)]_3$
\end{footnotesize}&
\begin{footnotesize}
$\longrightarrow$
\end{footnotesize}&
\begin{footnotesize}
$2\beta^{2}$
\end{footnotesize}\\

\begin{footnotesize}
$7\longrightarrow$
\end{footnotesize}&
\begin{footnotesize}
$[7]_3$\end{footnotesize}&\begin{footnotesize}
=
\end{footnotesize}&
\begin{footnotesize}
$21$ $\longrightarrow$
\end{footnotesize}&
\begin{footnotesize}
$0.12$
\end{footnotesize}& \begin{footnotesize}
$=$
\end{footnotesize}&
\begin{footnotesize}
$[\phi_3(7)]_3$
\end{footnotesize}&
\begin{footnotesize}
$\longrightarrow$
\end{footnotesize}&
\begin{footnotesize}
$\beta+2\beta^{2}$
\end{footnotesize}\\

\begin{footnotesize}
$8\longrightarrow$
\end{footnotesize}&
\begin{footnotesize}
$[8]_3$\end{footnotesize}&\begin{footnotesize}
=
\end{footnotesize}&
\begin{footnotesize}
$22$ $\longrightarrow$
\end{footnotesize}&
\begin{footnotesize}
$0.22$
\end{footnotesize}& \begin{footnotesize}
$=$
\end{footnotesize}&\begin{footnotesize}
$[\phi_3(8)]_3$
\end{footnotesize}&
\begin{footnotesize}
$\longrightarrow$
\end{footnotesize}&
\begin{footnotesize}
nessun $\beta$
\end{footnotesize}\\

\begin{footnotesize}
$9\longrightarrow$
\end{footnotesize}&
\begin{footnotesize}
$[9]_3$\end{footnotesize}&\begin{footnotesize}
=
\end{footnotesize}&
\begin{footnotesize}
$100$ $\longrightarrow$
\end{footnotesize}&
\begin{footnotesize}
$0.001$
\end{footnotesize}& \begin{footnotesize}
$=$
\end{footnotesize}&
\begin{footnotesize}
$[\phi_3(9)]_3$
\end{footnotesize}&
\begin{footnotesize}
$\longrightarrow$
\end{footnotesize}&
\begin{footnotesize}
$\beta^{3}$
\end{footnotesize}\\

\begin{footnotesize}
$10\longrightarrow$
\end{footnotesize}&
\begin{footnotesize}
$[10]_3$\end{footnotesize}&\begin{footnotesize}
=
\end{footnotesize}&
\begin{footnotesize}
$101$ $\longrightarrow$
\end{footnotesize}&
\begin{footnotesize}
$0.101$
\end{footnotesize}& \begin{footnotesize}
$=$
\end{footnotesize}&
\begin{footnotesize}
$[\phi_3(10)]_3$
\end{footnotesize}&
\begin{footnotesize}
$\longrightarrow$
\end{footnotesize}&
\begin{footnotesize}
$\beta+\beta^{3}$
\end{footnotesize}\\

\end{tabular}
\end{center}
e cos\`{i} via.}
\end{eso}
\begin{oss}
\rm{La successione di punti $\{\xi_{2,1}^{n}\}_{n\in\mathbb{N}}$ pu\`{o} essere ottenuta dalla rappresentazione ternaria dei numeri naturali, invertendo l'ordine delle sue cifre ed eliminando le rappresentazioni che presentano due cifre consecutive del tipo 21, 22. Le potenze di $\beta$ possono essere calcolate come nel caso della successione di van der Corput, tenendo conto dei coefficienti moltiplicativi; precisamente, le cifre nella rappresentazione di $\phi_3(n)$ in base $3$ sono proprio i coefficienti di $\beta$.}
\end{oss}
\begin{eso}
\label{Esempio2.4.3}
\rm{Consideriamo come ultimo esempio la successione di punti $\{\xi_{1,2}^{n}\}_{n\in\mathbb{N}}$ e determiniamo la famiglia di funzioni per la successione di partizioni $\rho_{1,2}^{n}$, ottenuta mediante successivi $\rho$-raffinamenti della partizione banale $\omega$, con $\rho=\{[0,\beta[,[\beta,\beta+\beta^{2}[,[\beta+\beta^{2},1[\}$. Definiamo le seguenti funzioni:
\begin{eqnarray}
&\psi_0 : [0,1[\longrightarrow [0,\beta[ \\
& x \mapsto \beta x \nonumber \ ,
\end{eqnarray}
\begin{eqnarray}
&\psi_1 : [0,\beta[\longrightarrow [\beta ,\beta+\beta^{2}[ \\
& x \mapsto \beta x + \beta \nonumber
\end{eqnarray}
e
\begin{eqnarray}
&\psi_2 : [0,\beta[\longrightarrow [\beta+\beta^{2} ,1[ \\
& x \mapsto \beta x + \beta+\beta^{2} \ . \nonumber
\end{eqnarray}
Anche in questo caso analizziamo le composizioni possibili tra queste funzioni. Osservato che ben quattro delle nove possibili composizioni non sono lecite, e cio\`{e} $\psi_1\circ\psi_1$, $\psi_1\circ\psi_2$, $\psi_2\circ\psi_1$ e $\psi_2\circ\psi_2$, passiamo subito alla determinazione dei primi tre insiemi di punti relativi alle partizioni $\rho_{1,2}^{1}$, $\rho_{1,2}^{2}$, $\rho_{1,2}^{3}$.
\begin{equation*}
\Lambda_{1,2}^{1}=\Big(\xi_{1,2}^{1},\xi_{1,2}^{2},\xi_{1,2}^{3}\Big)=\Big(\psi_0(0),\psi_1(0),\psi_2(0)\Big)=\Big(0,\beta,\beta+\beta^{2}\Big) \ .
\end{equation*}
Componendo le funzioni in maniera lecita otteniamo
\begin{eqnarray*}
\Lambda_{1,2}^{2}&=&\Big(\xi_{1,2}^{1},\xi_{1,2}^{2},\xi_{1,2}^{3},\xi_{1,2}^{4},\xi_{1,2}^{5}\big) \\
&=&\Big(\psi_0(0),\psi_1(0),\psi_2(0), \psi_{01}(0),\psi_{02}(0)\Big)\\
&=& \Big(\psi_{00}(0), \psi_{10}(0),\psi_{20}(0), \psi_{01}(0),\psi_{02}(0)\Big)\\
&=& \Big(0,\beta,\beta+\beta^{2},\beta^{2},\beta^{2}+\beta^{3}\Big)\ .
\end{eqnarray*}
Iterando nuovamente questo procedimento ai punti ottenuti si ha
\begin{equation*}
\Lambda_{1,2}^{3} = \Big(\xi_{1,2}^{1},\xi_{1,2}^{2},\xi_{1,2}^{3},\xi_{1,2}^{4},\xi_{1,2}^{5}, \xi_{1,2}^{6},\xi_{1,2}^{7},\xi_{1,2}^{8},\xi_{1,2}^{9},\xi_{1,2}^{10}, \xi_{1,2}^{11}\Big)
\end{equation*}
\begin{eqnarray*}
&=&\Big(\psi_0(0),\psi_1(0),\psi_2(0), \psi_{01}(0),\psi_{02}(0),\psi_{001}(0),\\
& & \psi_{101}(0),\psi_{201}(0),\psi_{002}(0),\psi_{102}(0),\psi_{202}(0)\Big)\\
&=& \Big(\psi_{000}(0), \psi_{100}(0),\psi_{200}(0), \psi_{010}(0),\psi_{020}(0),\psi_{001}(0),\\
& & \psi_{101}(0),\psi_{201}(0), \psi_{002}(0),\psi_{102}(0),\psi_{202}(0)\Big)\\
&=& \big(0,\beta,\beta+\beta^{2},\beta^{2},\beta^{2}+\beta^{3},\beta^{3},\beta+\beta^{3},\\
& & \beta+\beta^{2}+\beta^{3}, \beta^{3}+\beta^{4}, \beta+\beta^{3}+\beta^{4}, \beta+\beta^{2}+\beta^{3}+\beta^{4}\Big)\ .
\end{eqnarray*}
Procedendo in questo modo si ottiene la successione di punti $\{\xi_{1,2}^{n}\}$. 
\\ Osserviamo che anche in questo caso si tratta di considerare la rappresentazione dei numeri naturali in base $3$. Vedremo che ci sono due cancellazioni in pi\`{u} da effettuare rispetto al caso $L=2$, $S=1$, relative alle composizioni non possibili, ma l'osservazione cruciale riguarder\`{a} i coefficienti di $\beta$.\\ Vediamo, a tal scopo, cosa succede partendo dalla rappresentazione in base $3$ dei primi numeri naturali, cos\`{i} come abbiamo fatto nei due esempi precedenti.
\begin{center}
\begin{tabular}{rlcrrlcrl} 
\begin{footnotesize}
$0\longrightarrow$
\end{footnotesize}&
\begin{footnotesize}
$[0]_3$
\end{footnotesize}&
\begin{footnotesize}
$=$
\end{footnotesize}&\begin{footnotesize}
$00 \longrightarrow$
\end{footnotesize}&\begin{footnotesize}
$0.00$
\end{footnotesize}&\begin{footnotesize}
$=$
\end{footnotesize}&
\begin{footnotesize}
$[\phi_3(0)]_3$
\end{footnotesize}&
\begin{footnotesize}
$\longrightarrow$
\end{footnotesize}&
\begin{footnotesize}
$0$
\end{footnotesize}\\

\begin{footnotesize}
$1\longrightarrow$
\end{footnotesize}&
\begin{footnotesize}
$[1]_3$\end{footnotesize}&\begin{footnotesize}
$=$
\end{footnotesize}&
\begin{footnotesize}
$01$ $\longrightarrow$
\end{footnotesize}&
\begin{footnotesize}
$0.10$
\end{footnotesize}& \begin{footnotesize}
$=$
\end{footnotesize}&
\begin{footnotesize}
$[\phi_3(1)]_3$
\end{footnotesize}&\begin{footnotesize}
$\longrightarrow$
\end{footnotesize}&
\begin{footnotesize}
$\beta$
\end{footnotesize}\\

\begin{footnotesize}
$2\longrightarrow$
\end{footnotesize}&
\begin{footnotesize}
$[2]_3$\end{footnotesize}&\begin{footnotesize}
$=$
\end{footnotesize}&
\begin{footnotesize}
$02$ $\longrightarrow$
\end{footnotesize}&
\begin{footnotesize}
$0.20$\end{footnotesize}&\begin{footnotesize}
$=$
\end{footnotesize}&
\begin{footnotesize}
$\phi_3(2)]_3$
\end{footnotesize}&
\begin{footnotesize}
$\longrightarrow$
\end{footnotesize}&
\begin{footnotesize}
$(1+\beta)\beta$
\end{footnotesize}\\

\begin{footnotesize}
$3\longrightarrow$
\end{footnotesize}&
\begin{footnotesize}
$[3]_3$\end{footnotesize}&\begin{footnotesize}
$=$
\end{footnotesize}&
\begin{footnotesize}
$10$ $\longrightarrow$
\end{footnotesize}&
\begin{footnotesize}
$0.01$
\end{footnotesize}& \begin{footnotesize}
$=$
\end{footnotesize}&
\begin{footnotesize}
$[\phi_3(3)]_3$
\end{footnotesize}&
\begin{footnotesize}
$\longrightarrow$
\end{footnotesize}&
\begin{footnotesize}
$\beta^{2}$
\end{footnotesize}\\

\begin{footnotesize}
$4\longrightarrow$
\end{footnotesize}&
\begin{footnotesize}
$[4]_3$\end{footnotesize}&\begin{footnotesize}
=
\end{footnotesize}&
\begin{footnotesize}
$11$ $\longrightarrow$
\end{footnotesize}&
\begin{footnotesize}
$0.11$
\end{footnotesize}& \begin{footnotesize}
$=$
\end{footnotesize}&
\begin{footnotesize}
$[\phi_3(4)]_3$
\end{footnotesize}&
\begin{footnotesize}
$\longrightarrow$
\end{footnotesize}&
\begin{footnotesize}
nessun $\beta$
\end{footnotesize}\\

\begin{footnotesize}
$5\longrightarrow$
\end{footnotesize}&
\begin{footnotesize}
$[5]_3$\end{footnotesize}&\begin{footnotesize}
=
\end{footnotesize}&
\begin{footnotesize}
$12$ $\longrightarrow$
\end{footnotesize}&
\begin{footnotesize}
$0.21$
\end{footnotesize}& \begin{footnotesize}
$=$
\end{footnotesize}&
\begin{footnotesize}
$[\phi_3(5)]_3$
\end{footnotesize}&
\begin{footnotesize}
$\longrightarrow$
\end{footnotesize}&
\begin{footnotesize}
nessun $\beta$
\end{footnotesize}\\

\begin{footnotesize}
$6\longrightarrow$
\end{footnotesize}&
\begin{footnotesize}
$[6]_3$\end{footnotesize}&\begin{footnotesize}
=
\end{footnotesize}&
\begin{footnotesize}
$20$ $\longrightarrow$
\end{footnotesize}&
\begin{footnotesize}
$0.02$
\end{footnotesize}& \begin{footnotesize}
$=$
\end{footnotesize}&
\begin{footnotesize}
$[\phi_3(6)]_3$
\end{footnotesize}&
\begin{footnotesize}
$\longrightarrow$
\end{footnotesize}&
\begin{footnotesize}
$(1+\beta)\beta^{2}$
\end{footnotesize}\\

\begin{footnotesize}
$7\longrightarrow$
\end{footnotesize}&
\begin{footnotesize}
$[7]_3$\end{footnotesize}&\begin{footnotesize}
=
\end{footnotesize}&
\begin{footnotesize}
$21$ $\longrightarrow$
\end{footnotesize}&
\begin{footnotesize}
$0.12$
\end{footnotesize}& \begin{footnotesize}
$=$
\end{footnotesize}&
\begin{footnotesize}
$[\phi_3(7)]_3$
\end{footnotesize}&
\begin{footnotesize}
$\longrightarrow$
\end{footnotesize}&
\begin{footnotesize}
nessun $\beta$
\end{footnotesize}\\

\begin{footnotesize}
$8\longrightarrow$
\end{footnotesize}&
\begin{footnotesize}
$[8]_3$\end{footnotesize}&\begin{footnotesize}
=
\end{footnotesize}&
\begin{footnotesize}
$22$ $\longrightarrow$
\end{footnotesize}&
\begin{footnotesize}
$0.22$
\end{footnotesize}& \begin{footnotesize}
$=$
\end{footnotesize}&\begin{footnotesize}
$[\phi_3(8)]_3$
\end{footnotesize}&
\begin{footnotesize}
$\longrightarrow$
\end{footnotesize}&
\begin{footnotesize}
nessun $\beta$
\end{footnotesize}\\

\begin{footnotesize}
$9\longrightarrow$
\end{footnotesize}&
\begin{footnotesize}
$[9]_3$\end{footnotesize}&\begin{footnotesize}
=
\end{footnotesize}&
\begin{footnotesize}
$100$ $\longrightarrow$
\end{footnotesize}&
\begin{footnotesize}
$0.001$
\end{footnotesize}& \begin{footnotesize}
$=$
\end{footnotesize}&
\begin{footnotesize}
$[\phi_3(9)]_3$
\end{footnotesize}&
\begin{footnotesize}
$\longrightarrow$
\end{footnotesize}&
\begin{footnotesize}
$\beta^{3}$
\end{footnotesize}\\

\begin{footnotesize}
$10\longrightarrow$
\end{footnotesize}&
\begin{footnotesize}
$[10]_3$\end{footnotesize}&\begin{footnotesize}
=
\end{footnotesize}&
\begin{footnotesize}
$101$ $\longrightarrow$
\end{footnotesize}&
\begin{footnotesize}
$0.101$
\end{footnotesize}& \begin{footnotesize}
$=$
\end{footnotesize}&
\begin{footnotesize}
$[\phi_3(10)]_3$
\end{footnotesize}&
\begin{footnotesize}
$\longrightarrow$
\end{footnotesize}&
\begin{footnotesize}
$\beta+\beta^{3}$
\end{footnotesize}\\

\end{tabular}
\end{center}
e cos\`{i} via.}
\end{eso}
\begin{oss}
\rm{Come nei casi precedenti, anche in quest'ultimo la successione di punti $\{\xi_{1,2}^{n}\}_{n\in\mathbb{N}}$ pu\`{o} essere ottenuta dalla rappresentazione in base $3$ dei numeri naturali, invertendo l'ordine delle sue cifre ed eliminando le rappresentazioni corrispondenti a composizioni non lecite. Ci\`{o} che rende questo esempio particolarmente interessante \`{e} l'espressione dei coefficienti delle potenze di $\beta$. Infatti, mentre per gli esempi corrispondenti alle successioni di punti $\{\xi_{1,1}^{n}\}_{n\in\mathbb{N}}$ e $\{\xi_{2,1}^{n}\}_{n\in\mathbb{N}}$, rispettivamente, questi erano proprio le cifre di $\phi_2(n)$ e $\phi_3(n)$, rispettivamente, in base $2$ e $3$, rispettivamente, in questo caso ci\`{o} \`{e} falso per la cifra $2$. Al posto di $2$ bisogna sostituire l'espressione $1+\beta$.}
\end{oss}
\paragraph*{}
Siamo pronti per considerare il caso generale per $L, S\in \mathbb{N}$. 
\subsection*{Il caso generale $\{\xi_{L,S}^{n}\}_{n\in\mathbb{N}}$}
Supponiamo di voler suddividere l'intervallo unitario in $L$ intervalli lunghi di ampiezza $\beta$ ed $S$ intervalli corti di ampiezza $\beta^{2}$, con $L\beta+S\beta^{2}=1$. Definiamo le seguenti funzioni
\begin{eqnarray}
&\psi_j : [0,1[\longrightarrow [j\beta,(j+1)\beta[ \label{eq:2.15}\\
& x \mapsto \beta x + j\beta\nonumber
\end{eqnarray}
con $j=0,\ldots, L-1$ e
\begin{eqnarray}
&\psi_{L,k} : [0,\beta[\longrightarrow [L\beta+k\beta^{2} ,L\beta+(k+1)\beta^{2}[ \label{eq:2.16}\\
& x \mapsto \beta x +L\beta+k\beta^{2} \ , \nonumber
\end{eqnarray}
con $k=0,\ldots, S-1$. \\
Si vede immediatamente che gli insiemi di punti $\Lambda_{L,S}^{n}$ sono ottenuti dalla composizione di queste funzioni, ma ovviamente non tutte le composizioni sono possibili. Precisamente, le composizioni non ammissibili sono quelle del tipo $\psi_{L,k}\circ\psi_j$ per ogni valore di $k\in\{0,\ldots,S-1\}$ e per ogni valore di $j\in\{1,\ldots,L-1\}$ e $\psi_{L,k_i}\circ\psi_{L,k_s}$ per ogni $k_i, k_s$. \\Allora, valutando queste due famiglie di funzioni nello zero, troviamo l'insieme 
\begin{equation*}
\Lambda_{L,S}^{1}=\{\psi_0(0),\psi_1(0),\ldots, \psi_{L-1}(0),\psi_{L,0}(0),\psi_{L,1}(0)\ldots,\psi_{L,S-1}(0) \}
\end{equation*}
dei primi $L+S$ punti di $\{\xi_{L,S}^{n}\}_{n\in\mathbb{N}}$. Componendo in maniera lecita le funzioni, si ottengono tutti gli altri insiemi, come abbiamo gi\`{a} visto nei tre esempi precedenti nei casi particolari delle $LS$-successioni di punti, con $L=S=1$, $L=2, S=1$ e $L=1, S=2$. \smallskip\\ Anche nel caso generale, ovviamente, esiste uno stretto legame tra queste funzioni e le potenze di $\beta$ (e la rappresentazione dei numeri in base $L+S$). Illustriamo qui di seguito la natura di tale legame.\\ Innanzitutto, per rappresentare un numero in base $L+S$ abbiamo bisogno delle cifre da $0$ a $L+S-1$, ossia dei resti nella divisione di un generico numero naturale per $L+S$. Poi, dopo aver espresso ciascun numero naturale in base $L+S$, si inverte tale rappresentazione (ottenendo, cos\`{i}, la rappresentazione in base $L+S$ della funzione radice inversa calcolata in tale numero) e si eliminano le sequenze di cifre che contengono al loro interno gruppi di cifre consecutive corrispondenti a composizioni non lecite delle funzioni (\ref{eq:2.15}) e (\ref{eq:2.16}). Come negli esempi precedenti, queste cifre rappresentano (nella maniera appresso specificata) i ``coefficienti'' delle combinazioni lineari delle potenze di $\beta$ che ci danno i punti della successione. Precisamente, le cifre corrispondenti a valori compresi tra $0$ ed $L$ sono proprio i coefficienti di $\beta$, mentre per quelle comprese tra $L+1$ ed $L+S-1$ vale la seguente relazione:
\begin{equation}
\label{funzioni_$LS$}
k \mapsto L+k\beta 
\end{equation}
con $k=1,\ldots,S-1$, e cio\`{e} alla cifra $L+k$ si sostituisce $L+k\beta$. \\ Chiaramente, per $L=S=1$, $L=2$ e $S=1$ e $L=1$ e $S=2$ otteniamo esattamente i risultati presentati gli esempi \ref{Esempio2.4.1}, \ref{Esempio2.4.2} e \ref{Esempio2.4.3}.
\begin{oss}
\rm{Sia la Definizione \ref{def:2.3.1} che il metodo che abbiamo appena esposto nel caso generale utilizzano ciascuna una propria famiglia di funzioni da applicare in maniera ricorsiva ai punti degli insiemi precedentemente ordinati $\Lambda_{L,S}^{n-1}$ per la determinazione degli insiemi di punti $\Lambda_{L,S}^{n}$. Tuttavia, sebbene a prima vista le due definizioni possano sembrare sostanzialmente equiparabili, il secondo metodo che usa le funzioni (\ref{eq:2.15}) e (\ref{eq:2.16}) risulta particolarmente efficiente per via di due considerazioni che sono la naturale generalizzazione di quelle esposte nelle Osservazioni \ref{Oss:2.4.2}. La prima riguarda la natura delle funzioni (\ref{eq:2.15}) e (\ref{eq:2.16}), che in questo caso vengono definite all'inizio e quindi non pi\`{u} ridefinite per ogni singolo blocco, a differenza delle funzioni (\ref{funzioni_phi}), che dipendono da $n$. La seconda considerazione \`{e} forse pi\`{u} importante della prima e dipende dal legame stretto che intercorre tra queste funzioni e la rappresentazione in base $L+S$, come abbiamo evidenziato qui sopra: infatti, gli insiemi di punti $\Lambda_{L,S}^{n}$ vengono determinati indipendentemente dall'insieme $\Lambda_{L,S}^{n-1}$ in quanto questo calcolo viene tradotto in termini di rappresentazione dei numeri naturali rispetto alla base $L+S$. Questo fa s\`{i} che se vogliamo calcolare i primi $N$ punti di $\{\xi_{L,S}^{n}\}_{n\in\mathbb{N}}$, con $t_{n-1}<N<t_{n}$, possiamo farlo senza dover necessariamente calcolare tutti i punti di $\Lambda_{L,S}^{n-1}$, perch\`{e} non si tratta pi\`{u} di applicare le funzioni (\ref{funzioni_phi}) all'insieme $\Lambda_{L,S}^{n-1}$, ma di scrivere i primi $N$ punti nella base $L+S$, ovviamente al netto delle opportune cancellazioni, cos\`{i} come spiegato in precedenza.} 
\end{oss}
Ora che abbiamo schematizzato il nostro ragionamento, possiamo passare alla presentazione dell'algoritmo per calcolare le successioni di punti $\{\xi_{L,S}^{n}\}$.
\section{Algoritmo}
Concludiamo questo capitolo con la descrizione dell'algoritmo veloce, discusso nel paragrafo precedente. Esso sar\`{a} scritto utilizzando uno \emph{pseudocodice}, ossia mediante un linguaggio di programmazione fittizio, non direttamente compilabile, allo scopo di renderlo pi\`{u} leggibile e comprensibile. \\ In seguito, per determinare le successioni di punti $\{\xi_{L,S}^{n}\}_{n\in\mathbb{N}}$, si \`{e} scelto di tradurre l'algoritmo in un linguaggio implementabile dal software Matlab. 
\paragraph*{}
Procediamo alla descrizione dell'algoritmo in tre passi, la cui parte principale sar\`{a} costituita dal corpo centrale.\smallskip\\
\textbf{1.} \\
\begin{algorithm}[H]
\TitleOfAlgo{base\_inversa}
\DontPrintSemicolon
\KwIn{$(n\in \mathbb{N}, b=L+S)$}
\KwOut{$[n]_{\frac{1}{b}}$}
\KwData{$int\ j=\lfloor \log(n)/log(b)\rfloor +1 $ \;
 $int\ a[j]=0$\;
 \Begin{
 \For{$i = 1 \to j$}{ 
 $a[i] \gets mod(n,b)$\;
 $n \gets \lfloor \frac{n}{b} \rfloor$
 }
 }
 }
\KwResult{$a$}
\smallskip
\end{algorithm}

Questa seconda sequenza di comandi, alquanto complessa visto il numero di cicli innestati, richiama anche l'algoritmo precedente, che abbiamo chiamato funzione base\_inversa e rappresenta la prima parte vera e propria del nostro algoritmo, in cui vengono effettuate le cancellazioni dei vettori $a$ che presentano cifre consecutive corrispondenti alle composizioni non ammissibili. Il risultato di questo secondo algoritmo sar\`{a} una matrice $A$, le cui righe sono i vettori $a$ che restano dopo aver effettuato le cancellazioni (che corrispondono ai coefficienti delle potenze di $\beta$ nella maniera che vedremo nel terzo algoritmo). \\ \`{E} dunque sufficiente fornire in input solo i tre valori di $n, L, S$. \\Notiamo innanzitutto che, poich\`{e} non tutte le rappresentazioni dei numeri naturali in base $L+S$ corrispondono a punti della successione, non sappiamo a priori a quale sequenza di cifre corrisponder\`{a} l'$n$-esima riga della matrice (che non pu\`{o}, evidentemente, corrispondere al numero naturale $n$). A causa di questa incertezza sul numero di righe, \`{e} dunque necessario inizializzare un vettore riga $A$. Per la stessa ragione abbiamo poi scelto che questo vettore riga abbia dimensione $100$, anche se questo numero potrebbe essere ridotto, dal momento che con $100$ cifre possiamo esprimere un numero estremamente grande in qualsiasi base. Inoltre, poich\`{e} il nostro scopo \`{e} quello di ottenere la matrice $A$, aggiungiamo passo passo altre righe al vettore $A$, le cui componenti sono tutte nulle per rappresentare lo zero. Queste righe successive saranno identificate con un vettore ausiliario, che indichiamo con $c$. Ci\`{o} che faremo sar\`{a} controllare ad ogni passo il numero di righe della matrice $A$, che denotiamo con $r$. Fintanto che il numero di righe \`{e} minore del numero fissato $n$, prenderemo in esame la rappresentazione del numero successivo, indicato con $i$. A questo punto bisogna controllare se questo numero ha una rappresentazione che contiene delle cifre corrispondenti a composizioni di funzioni ammissibili oppure no, e per farlo utilizziamo una variabile ausiliaria $d$, il cui valore sar\`{a} $0$ o $1$ nel caso in cui il numero debba essere inserito nella matrice o no, rispettivamente. La lunghezza del vettore $a$ \`{e} variabile e dipende dal numero che stiamo rappresentando. Ogni volta che rappresentiamo un numero naturale, otteniamo $a$ con l'algoritmo 1, immagazziniamo le sue componenti ed effettuiamo un controllo su esse. Se due componenti consecutive di $a$ coincidono con le composizioni non lecite, poniamo $d=d+1$, altrimenti non si fa nulla. Nel primo caso, incrementiamo semplicemente il valore di $i$, mentre nel secondo, in cui il valore di $d$ \`{e} ancora impostato sullo zero, memorizziamo nelle prime cifre di $c$ (numero pari alla lunghezza di $a$) gli stessi valori contenuti in $a$ e controlliamo se sono presenti valori maggiori di $L$. In tal caso riproponiamo il ragionamento fatto nel Paragrafo \ref{funzioni_$LS$} nel cui caso sostituiamo a ciascuna cifra maggiore di $L$ l'espressione (\ref{funzioni_$LS$}). Infine incrementiamo il valore dell'intero positivo successivo $i$ e aggiungiamo la riga $c$ alla matrice $A$. Ora, il numero di righe $r$ della matrice viene calcolato nuovamente e si ripetono cos\`{i} tutti i cicli. Alla fine, quando il numero di righe della matrice \`{e} $n$, si calcola il numero di cifre $e$ necessario per rappresentare l'ultimo elemento $i$, in modo che la matrice finale $A$ sia ripulita dalle colonne superflue, ossia $A$ sar\`{a} la matrice costituita dalle prime $r$ righe ed $e$ colonne della matrice di partenza. \smallskip \\
\textbf{3.} \\
\begin{algorithm}[H]
\TitleOfAlgo{Algoritmo}
\DontPrintSemicolon
\KwIn{$(L,S,n\in \mathbb{N})$}
\KwOut{$\{\xi_{L,S}^{n}\}$}
\KwData{$A=cancellazioni(L,S,n)$ \;
 $\beta=\frac{-L+\sqrt{L^{2}+4S}}{2S}$}
 \Begin{
 $[\cdot,c]= size(A)$\;
 \For{$k = 1 \to c$}{ 
 $s(k)=\beta^{k}$
 }
 $B=As'$
 }
\KwResult{$B$}
\smallskip
\end{algorithm}

Infine quest'ultimo algoritmo restituisce esattamente il numero di punti da noi indicato, ossia, fissato $n\in\mathbb{N}$, esso restituir\`{a} proprio i primi $n$ termini della successione di punti $\{\xi_{L,S}^{n}\}_{n\in\mathbb{N}}$.
Esso prende in input la matrice $A$, calcola il numero delle sue colonne e lo indica con $c$. Poi costruisce un vettore $s$ di $c$ componenti, ciascuna delle quali \`{e} una potenza crescente di $\beta$. Quindi il nostro insieme di punti $(\xi_{L,S}^{1}, \xi_{L,S}^{2},\ldots, \xi_{L,S}^{n})$ sar\`{a} il prodotto della matrice $A$ con il vettore $s$.\medskip \\

Poich\`{e}, come abbiamo gi\`{a} detto, il precedente algoritmo \`{e} stato implementato con Matlab (che \`{e} un software numerico) la risposta che otteniamo \`{e} il valore numerico delle potenze di $\beta$ corrispondenti. \`{E} ovviamente possibile anche una risposta simbolica da parte del programma, ma abbiamo scelto la prima per due ragioni: una puramente tecnica riguarda i grafici, che non sarebbero stati possibili attraverso un parametro simbolico, l'altra riguarda la velocit\`{a} di risposta del computer, che impiega molto pi\`{u} tempo per elaborare una risposta simbolica. \\ Riportiamo le risposte del programma nei tre Esempi \ref{es:2.3.2} considerati in precedenza.
\begin{enumerate}
\item I primi $8$ punti della successione $\{\xi_{1,1}^{n}\}_{n\in\mathbb{N}}$, corrispondenti a $\Lambda_{1,1}^{4}$ sono
\begin{equation*}
0, 0.6180, 0.3820, 0.2361, 0.8541, 0.1459, 0.7639, 0.5279\ .
\end{equation*}
\item I primi $17$ punti della successione $\{\xi_{2,1}^{n}\}_{n\in\mathbb{N}}$, corrispondenti a $\Lambda_{2,1}^{3}$ sono
\begin{equation*}
0,  0.4142,  0.8284,  0.1716,  0.5858, 0.3431,  0.7574,  0.0711,  0.4853,
\end{equation*}
\begin{equation*}
0.8995,  0.2426,  0.6569,  0.1421,  0.5563,  0.9706,  0.3137,  0.7279 \ .
\end{equation*}
\item Infine, i primi $11$ punti della successione $\{\xi_{1,2}^{n}\}_{n\in\mathbb{N}}$, corrispondenti a $\Lambda_{1,2}^{3}$ sono
\begin{equation*}
0,  0.5000,  0.7500,  0.2500,  0.3750,  0.1250, 
\end{equation*}
\begin{equation*}
0.6250,  0.8750,  0.1875,  0.6875,  0.9375 \ .
\end{equation*}
\end{enumerate}  

\chapter{$LS$-successioni di punti in $[0,1[\times [0,1[$}
\graphicspath{{Capitolo3/}}
In questo terzo ed ultimo capitolo estenderemo la definizione di $LS$-successioni di punti e l'algoritmo presentato nel capitolo precedente al quadrato di lato unitario $I^{2} = [0, 1[x[0, 1[$. Quest'ultimo ci sar\`{a} fondamentale per la visualizzazione, mediante dei grafici, di alcuni dei risultati ottenuti. \\Per far ci\`{o}, prenderemo in esame le successioni introdotte da van der Corput in $I^{2}$ \cite{vanderCorput} e da Halton in $I^{s}$ \cite{Halton}, restringendo la nostra attenzione al caso bidimensionale.
\section{Successioni di van der Corput e di Halton}
Richiamiamo preliminarmente alcuni concetti esposti nei capitoli precedenti. Ricordiamo che se $b$ \`{e} un intero positivo, allora per ogni numero naturale $n=a_0+a_1b+\ldots+a_{M-1}b^{M-1}+a_Mb^{M}$ l'espressione
\begin{equation*}
[n]_b=a_Ma_{M-1}\ldots a_2a_1a_0
\end{equation*}
indica la rappresentazione di $n$ in base $b$, dove $M=\lfloor \log_b n \rfloor$ \`{e} il numero di cifre necessarie per esprimerla. \\Come abbiamo gi\`{a} avuto modo di osservare nella Definizione \ref{radinv}, invertendo l'ordine delle cifre $a_i$, con $i\in \{0,\ldots,M\}$, si ottiene il numero compreso tra 0 e 1 detto radice inversa di $n$, e cio\`{e}
\begin{equation}
\phi_b(n)= a_0b^{-1}+a_1b^{-2}+\ldots+a_Mb^{-M-1}\ ,\label{eq:3.1}
\end{equation}
la cui rappresentazione in base $b$ \`{e} $[\phi_b(n)]_b=0.a_0a_1\ldots a_{M-1}a_M$.\smallskip\\ Siamo adesso pronti per ricordare la definizione di alcune importanti successioni che fanno uso di tale notazione e che in qualche modo generalizzano in spazi multidimensionali la successione di van der Corput gi\`{a} presentata nella Definizione \ref{van der Corput}. \\ La prima interessante generalizzazione in dimensione $2$ risale allo stesso van der Corput che in \cite{vanderCorput} ha introdotto la seguente successione finita.
\begin{defn}[Successione di van der Corput]
\rm{La \emph{successione di van der Corput} di ordine $N\in\mathbb{N}$ nel quadrato $I^{2}$ \`{e} definita come}
\begin{equation}
\mathbf{x}_n=\left(\frac{n}{N},\phi_2(n)\right)\qquad \text{con\ } n=1,\ldots, N\ . \label{eq:3.2}
\end{equation}
\end{defn}
L'immagine successiva riporta la rappresentazione grafica della successione $\left(\frac{n}{5000},\phi_2(n)\right)$ per $n=1,\ldots, 5000$.\\
\begin{figure}[h!]
\centering
\setlength{\unitlength}{1cm}
\includegraphics[scale=0.88, keepaspectratio]{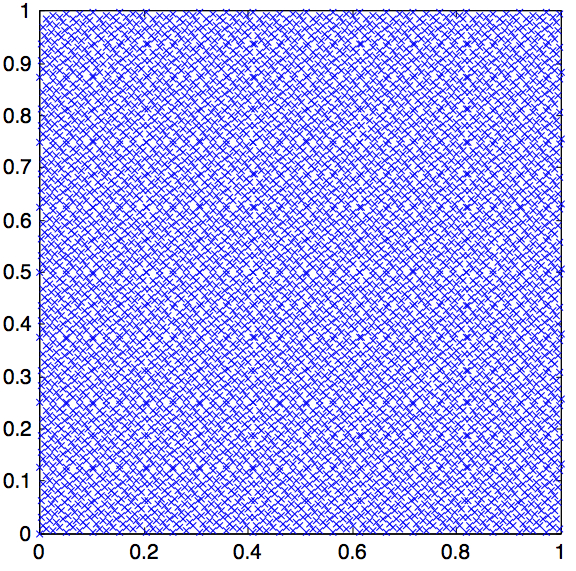}
\caption{Successione di van der Corput $\left(\frac{n}{5000},\phi_2(n)\right)$ per $1\leq n\leq 5000$}\label{fig:unif.-vdc}
\end{figure}\\
In seguito, fu il matematico inglese J. M.Hammersley che in \cite{Hammersley} propose una versione $s$-dimensionale della definizione precedente. Egli, infatti, prende i primi $s-1$ numeri primi, che indica con $b_1, b_2,\ldots, b_{s-1}$. Considera poi la rappresentazione dei primi $N$ numeri naturali rispetto a queste basi e il processo di riflessione di tali espressioni in modo da ottenere frazioni nell'intervallo $[0,1]$ del tipo (\ref{eq:3.1}). \\Si ha dunque la seguente
\begin{defn}[Successione di Hammersley]
\rm{La \emph{successione di Hammersley} di ordine $N\in \mathbb{N}$ nell'ipercubo $I^{s}$ \`{e} la successione $s$-dimensionale definita come}
\begin{equation}
\mathbf{x}_n=\left(\frac{n}{N},\phi_{b_1}(n),\phi_{b_2}(n),\ldots,\phi_{b_{s-1}}(n)\right)\qquad \text{con}\ n=1,\ldots,N \ , \label{hammersley}
\end{equation}
\rm{dove $b_1,\ldots,b_{s-1}$ sono i primi $s-1$ numeri primi e le $\phi_{b_j}$ sono le funzioni radice inversa che compaiono nella Definizione \ref{radinv}.}
\end{defn}
Osserviamo che la successione di Hammersley \`{e} un insieme di $N$ punti nell'ipercubo $I^{s}$ e che quindi tale insieme non pu\`{o} essere esteso ad una successione infinita: infatti, fissato il numero $N$, non si possono aggiungere ulteriori punti a questa sequenza, a meno di non aumentare il valore di  $N$, ma ci\`{o} comporta che tutti i termini vengano ricalcolati di conseguenza. \medskip\\ La successione di van der Corput fu generalizzata in un'altra direzione da Halton nel modo seguente.
\begin{defn}[Successione di Halton]
\rm{Definiamo \emph{successione di Halton} $s$-dimensionale $(\mathbf{x})_{n\in\mathbb{N}}$ la successione definita sull'ipercubo $I^{s}$ come}
\begin{equation}
\mathbf{x}_n=\left(\phi_{b_1}(n),\phi_{b_2}(n),\ldots,\phi_{b_s}(n)\right)\qquad \text{per\ ogni}\ n\in \mathbb{N}\ , \label{eq:3.3}
\end{equation}
\rm{dove $b_1,b_2,\ldots,b_s$ sono interi positivi a due a due coprimi.}
\end{defn}
\`{E} immediato osservare che per la successione di Halton la dipendenza da $N$ \`{e} stata eliminata. 
Nella \figurename~\ref{fig:vdc23} riportiamo il grafico dei primi 5000 punti della successione di Halton bidimensionale, relativi ai due numeri coprimi $b_1=2$ e $b_2=3$.\\
\begin{figure}[h!]
\centering
\setlength{\unitlength}{1cm}
\includegraphics[scale=0.88, keepaspectratio]{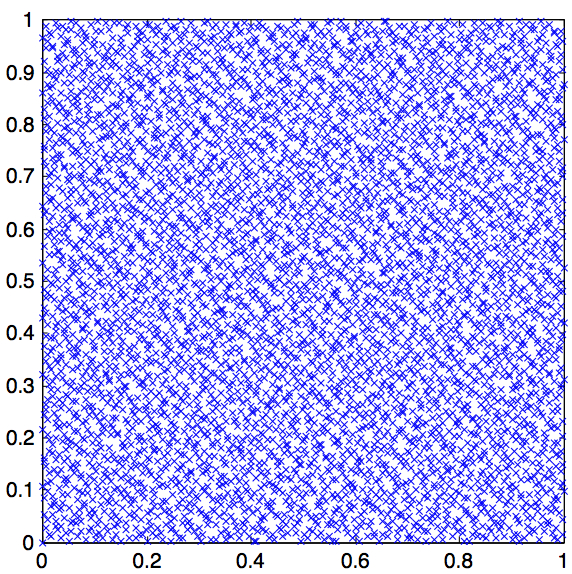}
\caption{Successione di Halton $\{(\phi_2(n),\phi_3(n))\}_{n\in\mathbb{N}}$ per $1\leq n\leq 5000$}\label{fig:vdc23}
\end{figure}\\
Ricordiamo che le successioni di punti in $I^{s}$ per cui la discrepanza risulta essere uguale a $\mathcal{O}(\log^{s} N/N)$ per $N\to \infty$ sono dette successioni a bassa discrepanza.\\ 
Lo stesso Halton in \cite{Halton} ha dimostrato che la (\ref{eq:3.3}) \`{e} una successione a bassa discrepanza, cio\`{e} vale il seguente risultato
\begin{prop}
La discrepanza della successione di Halton $s$-dimensio-\\nale $(\textbf{x}_n)_{n\in\mathbb{N}}$ soddisfa $$D_N(\textbf{x}_1,\ldots, \textbf{x}_N)\leq\frac{\log^{s} N}{N}$$
per ogni $n\in\mathbb{N}$.
\end{prop}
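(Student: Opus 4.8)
L'idea \`{e} di ridursi dapprima alla star-discrepanza e poi di tradurre il conteggio dei punti in un problema aritmetico di congruenze, sfruttando in modo essenziale la coprimalit\`{a} delle basi $b_1,\ldots,b_s$. Per la relazione $(\ref{eq:1.15})$ basta stimare $D_N^{\ast}$, e quindi, fissato un box $J=[0,\beta_1[\times\cdots\times[0,\beta_s[\,\subset I^{s}$, controllare la quantit\`{a} $\big|\,\#\{1\le n\le N:\ \mathbf{x}_n\in J\}-N\,\lambda(J)\,\big|$ uniformemente in $J$. Il punto di partenza \`{e} la seguente osservazione elementare sulla funzione radice inversa $\phi_b$: se $E=[a\,b^{-k},(a+1)b^{-k}[$ \`{e} un intervallo \emph{elementare} $b$-adico di livello $k$, allora $\phi_b(n)\in E$ se e solo se le prime $k$ cifre di $[n]_b$, lette al rovescio, formano il numero $a$, cio\`{e} se e solo se $n$ appartiene a un'unica classe di resto modulo $b^{k}$; dunque $\#\{0\le n<N:\ \phi_b(n)\in E\}$ differisce da $N b^{-k}$ per al pi\`{u} $1$. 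Il caso $s=1$ (la successione di van der Corput in base $b$, estensione della stima $ND_N\le\log(N+1)/\log 2$ gi\`{a} vista) si tratta con questo stesso argomento e costituisce il mattone di base.

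Il passo centrale consiste nel decomporre ciascun $[0,\beta_j[$ e nell'applicare il Teorema Cinese del Resto. Scritto $\beta_j=\sum_{i\ge1}d_{j,i}\,b_j^{-i}$ nell'espansione in base $b_j$ e fissato un livello di troncamento $k_j$, l'intervallo $[0,\beta_j^{(k_j)}[$, con $\beta_j^{(k_j)}=\sum_{i=1}^{k_j}d_{j,i}b_j^{-i}$, \`{e} unione disgiunta di al pi\`{u} $(b_j-1)k_j$ intervalli elementari $b_j$-adici di livello $\le k_j$, e inoltre $[0,\beta_j^{(k_j)}[\,\subseteq[0,\beta_j[\,\subseteq[0,\beta_j^{(k_j)}+b_j^{-k_j}[$, dove anche l'intervallo maggiorante si decompone in al pi\`{u} $(b_j-1)k_j$ pezzi elementari. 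Distribuendo il prodotto, $\chi$ del box interno (e di quello esterno) si scrive come somma di al pi\`{u} $\prod_j\big((b_j-1)k_j\big)$ funzioni caratteristiche di box elementari $\prod_j E_{j,i_j,\delta_j}$. Per ciascuno di questi le condizioni $\phi_{b_j}(n)\in E_{j,i_j,\delta_j}$ sono congruenze modulo le potenze $b_j^{i_j}$, a due a due coprime; per il Teorema Cinese del Resto esse individuano un'unica classe di resto modulo $\prod_j b_j^{i_j}$, e quindi il numero di $n\in\{0,\ldots,N-1\}$ che vi appartengono differisce da $N\prod_j b_j^{-i_j}$ per al pi\`{u} $1$.

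Resta da sommare gli errori. Sommando il contributo $\le 1$ di ogni box elementare si ottiene un errore di conteggio $\le C(b_1,\ldots,b_s)\prod_{j=1}^{s}k_j$; la differenza di volume tra box esterno e interno contribuisce al pi\`{u} $N\sum_j b_j^{-k_j}$. Scegliendo $k_j=\lceil\log N/\log b_j\rceil$, cos\`{i} che $b_j^{k_j}\ge N$, il termine di volume diventa $O(1)$ mentre $\prod_j k_j=\prod_j\big(\log N/\log b_j+O(1)\big)=O\big((\log N)^{s}\big)$. Pertanto $\big|\,\#\{1\le n\le N:\mathbf{x}_n\in J\}-N\lambda(J)\,\big|\le c_1(\log N)^{s}$, da cui $D_N^{\ast}\le c_1(\log N)^{s}/N$ e, per $(\ref{eq:1.15})$, $D_N\le 2^{s}c_1(\log N)^{s}/N=\mathcal{O}(\log^{s}N/N)$ (il valore della costante dipende dalle basi; la forma pulita dell'enunciato si ottiene ottimizzando con cura i conteggi). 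L'ostacolo principale \`{e} proprio il passaggio via Teorema Cinese del Resto, dove interviene l'ipotesi che i $b_j$ siano a due a due coprimi, insieme alla contabilit\`{a} attenta della decomposizione cifra per cifra: \`{e} questa che garantisce il tasso ottimale $(\log N)^{s}$ e non una stima pi\`{u} debole di tipo potenza di $N$, che si otterrebbe con una maggiorazione grossolana del singolo box.
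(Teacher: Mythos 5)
Il lavoro non dimostra questa proposizione: la enuncia soltanto, rimandando all'articolo originale di Halton, per cui non c'\`{e} una dimostrazione interna con cui confrontare la tua. Detto questo, lo schema che proponi \`{e} esattamente l'argomento classico ed \`{e} corretto nelle sue linee essenziali: l'osservazione che $\phi_b(n)$ cade in un intervallo elementare $b$-adico di livello $k$ se e solo se $n$ appartiene a un'unica classe di resto modulo $b^{k}$, la decomposizione cifra per cifra di $[0,\beta_j[$ in al pi\`{u} $(b_j-1)k_j$ intervalli elementari pi\`{u} un resto di lunghezza $<b_j^{-k_j}$, l'uso del Teorema Cinese del Resto (che \`{e} il punto in cui entra la coprimalit\`{a} delle basi) per ridurre ogni box elementare a una singola classe di resto modulo $\prod_j b_j^{i_j}$, e infine la scelta $k_j\approx\log N/\log b_j$ che bilancia errore di conteggio ed errore di volume. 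Un'unica precisazione merita di essere fatta esplicita: ci\`{o} che questo argomento dimostra \`{e} $D_N\leq C(b_1,\ldots,b_s)\,(\log N)^{s}/N$, cio\`{e} la forma $\mathcal{O}(\log^{s}N/N)$, e non la disuguaglianza letterale con costante $1$ scritta nell'enunciato; quest'ultima \`{e} in effetti falsa per $N$ piccolo, dato che $D_N\geq 1/N$ sempre (generalizzazione del Teorema \ref{thm:1.4.4}) mentre $\log^{s}N/N<1/N$ per $N<e$. Hai segnalato tu stesso la dipendenza della costante dalle basi: \`{e} la versione corretta del risultato, ed \`{e} quella che il tuo argomento stabilisce.
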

Si pu\`{o} infatti osservare che i punti della successione $\left\{\big(\varphi_2(n),\varphi_3(n)\big)\right\}_{n\in\mathbb{N}}$, illustrati nella \figurename~\ref{fig:vdc23}, si distribuiscono in modo da ricoprire il quadrato in maniera uniforme, come ci si aspetta da una successione a bassa discrepanza.
\paragraph*{}
Anche le successioni di van der Corput e Hammersley sono esempi di successioni a bassa discrepanza, ed entrambi gli autori presentarono risultati importanti riguardanti la stima della discrepanza.\\ Le successioni a bassa discrepanza rivestono un ruolo molto importante nelle applicazioni perch\`{e} si tratta di successioni utilizzate nei metodi Quasi-Monte Carlo per il calcolo approssimato di integrali. \smallskip\\
Sappiamo che esistono vari metodi di integrazione numerica per il calcolo di integrali di funzioni di cui non riusciamo a determinare analiticamente la primitiva. Questi metodi approssimano il valore dell'integrale della funzione $f$ con una combinazione lineare di valori assunti dalla $f$, ad esempio, nel caso unidimensionale
\begin{equation*}
\int _I f(x) dx \approx \sum_{n=1}^{N}w_i f(x_i)\ .
\end{equation*}
Ovviamente differenti formule di quadratura corrispondono a differenti scelte dei punti $x_i$ e dei pesi $w_i$.
Il metodo di integrazione Monte Carlo si basa sull'utilizzo di numeri $x_i$ casuali, con pesi $w_i=\frac{1}{N}$ per $i=1,\ldots, N$. \\ Se, ad esempio, vogliamo determinare l'integrale di una funzione $f$ sul quadrato $I^{2}$, consideriamo i vettori casuali $\textbf{x}_1,\textbf{x}_2,\ldots,\textbf{x}_N$ e approssimiamo il valore dell'integrale con la media dei valori assunti dalla funzione in detti punti, ossia
\begin{equation*}
\int_{I^{2}}f(\textbf{x})dx\approx \frac{1}{N}\sum_{n=1}^{N}f(\textbf{x}_n)\ .
\end{equation*}
La caratteristica peculiare di tale metodo consiste nel fatto che l'errore assoluto $$\left| \int_I f(x) dx - \frac{1}{N}\sum_{n=1}^{N}f(x_i) \right|\ ,$$ ossia l'errore commesso nella stima dell'integrale attraverso la media dei valori della $f$ negli $N$ punti $x_i$, decresce in media come $\frac{1}{\sqrt{N}}$, in virt\`{u} del Teorema del Limite Centrale, indipendentemente dalla dimensione dello spazio.\\ Il metodo Monte Carlo pu\`{o} dunque sembrare alquanto insoddisfacente se confrontato con altri metodi di quadratura noti per l'approssimazione di integrali di funzioni in una variabile come il metodo dei trapezi, di Cavalieri-Simpson o di Gauss, che convergono molto pi\`{u} rapidamente, nel senso che l'errore decresce pi\`{u} velocemente. \\Quando per\`{o} si passa alla stima di integrali multidimensionali la situazione cambia notevolmente, perch\`{e} mentre l'errore con la stima fornita dal metodo Monte Carlo risulta sempre dello stesso ordine, con gli altri metodi l'ordine di convergenza decresce al crescere della dimensione dello spazio.\\
Poich\`{e} la generazione di numeri o vettori casuali non \`{e} agevole, specialmente se $N$ \`{e} grande, a partire da von Neumann, e quindi dai primi calcolatori, \`{e} stata sviluppata l'idea di considerare successioni deterministiche che simulino la casualit\`{a}. Nei vari linguaggi di programmazione c'\`{e} di solito un comando denominato $\emph{rand}$, o simili, che produce successioni di numeri di $[0,1[$ che vari test statistici non distinguono da successioni casuali. Tali successioni si chiamano \emph{pseudo-casuali}. \\ Successivamente \`{e} nata l'idea di rinunciare ad imitare la casualit\`{a} delle successioni e di considerare invece delle successioni in un certo senso ottimali per l'uso che si ha in mente, cio\`{e} per il calcolo degli integrali. Tali successioni si chiamano \emph{quasi-casuali} (anche se il nome pu\`{o} ingannare, perch\`{e} non hanno nulla di casuale), che poi sono le nostre successioni a bassa discrepanza. Queste successioni danno luogo al metodo Quasi-Monte Carlo.\\ Questa scelta pone rimedio a due aspetti: il primo riguarda la difficolt\`{a} di scegliere dei punti in maniera del tutto casuale, il secondo invece riguarda l'accuratezza nella convergenza dei metodi. Infatti sappiamo che una successione di punti in $I$ \`{e} a bassa discrepanza se essa \`{e} asintoticamente equivalente a $\log N/N$ e quindi l'utilizzo di questo secondo metodo garantisce che la stima dell'integrale converger\`{a} al suo valore reale come $\frac{\log N}{N}$, ossia pi\`{u} velocemente di $\frac{1}{\sqrt{N}}$.
\paragraph{}
Nei due articoli di Hammersley e Halton viene presa in esame la stima dell'integrale di una funzione $f$ continua su $I^{s}$, e cio\`{e} di
\begin{equation}
\int_0^{1}dx_1\ldots \int_0^{1}dx_s f(x_1,\ldots,x_s) \ . \label{*}
\end{equation}
Hammersley in \cite{Hammersley} afferma che, sebbene nei metodi numerici per il calcolo approssimato di integrali si utilizzi, come abbiamo gi\`{a} visto, la formula $\sum_i w_if(x_i)$, in assenza di altre informazioni sulla $f$, non possiamo far altro che considerare i pesi $w_i$ tutti uguali. Poich\`{e} per ipotesi la $f$ \`{e} necessariamente integrabile, essa pu\`{o} essere approssimata da una successione di funzioni semplici integrabili. Inoltre, poich\`{e} gli insiemi su cui la $f$ \`{e} costante si possono costruire mediante unione ed intersezione di intervalli, \`{e} ragionevole testare l'efficienza del metodo di approssimazione dell'integrale (\ref{*}) supponendo che la $f$ sia la funzione caratteristica di un sottointervallo $J^{*}=[\textbf{0}, \textbf{b}[$ contenuto nell'ipercubo unitario $I^{s}$. 
\\ Van der Corput, nel costruire la successione finita (\ref{eq:3.2}) sul quadrato, ha inoltre dimostrato che esiste una costante $C$ tale che, posto $\textbf{x}_n=\left(\frac{n}{N}, \varphi_2(n) \right)$ per ogni $1\leq n \leq N$, si ha
\begin{equation*}
\sup_{b_1,b_2\in [0,1[} \left|\sum_{n=1}^{N}\chi_{[\textbf{0}, \textbf{b}[}(\textbf{x}_n)-N(b_1b_2)\right| < C \ln N \ ,
\end{equation*}
dove $\textbf{b}=(b_1,b_2)$.\\ Hammersley, dopo aver introdotto la successione finita (\ref{hammersley}), si chiede in \cite{Hammersley} se \`{e} vero che 
\begin{equation*}
\sup_{J^{*}\subset I^{s}} \left|\sum_{n=1}^{N}\chi_{J^{*}}(\textbf{x}_n)-N\lambda(J^{*})\right|\smallskip
\end{equation*}
sia maggiorato da un multiplo di una qualche potenza di $\log N$.\\
Halton dimostra in \cite{Halton} che per la successione di Hammersley $(\textbf{x}_n)_{1\leq n\leq N}$ definita da (\ref{hammersley}) vale la seguente stima:
\begin{equation*}
\sup_{J^{*}\subset I^{s}}\left|\sum_{n=1}^{N}\chi_{J^{*}}(\textbf{x}_n)-N\lambda(J^{*})\right| < C_s (\ln N)^{s-1}\ ,\smallskip
\end{equation*}
per ogni $n\in\mathbb{N}$, con $C_s$ esplicita costante positiva.
\section{$LS$-successioni ``alla van der Corput''}
In questo capitolo presentiamo una prima estensione al caso bidimensionale delle $LS$-successioni di punti. Vogliamo presentare una loro variante in maniera simile a quanto fatto da van der Corput nel passaggio dalla successione unidimensionale $\{\phi_2(n)\}_{n\in\mathbb{N}}$ alla successione finita $\left(\frac{n}{N},\phi_2(n)\right)_{1\leq n\leq N}$ nel quadrato.
\begin{defn}
\label{Def:3.2.1}
\rm{Una \emph{$LS$-successione ``alla van der Corput''} di ordine $N\in\mathbb{N}$ nel quadrato $I^{2}$ \`{e} definita come
\begin{equation}
\textbf{x}_n=\left(\frac{n}{N}, \xi_{L,S}^{n}\right)\qquad \text{con}\ n=1,\ldots, N\ , \label{eq:3.5}
\end{equation}
dove $\{\xi_{L,S}^{n}\}_{n\in\mathbb{N}}$ \`{e} una $LS$-successione.}
\end{defn}
Ricordiamo brevemente la definizione di discrepanza per una successione in $I^{2}$, gi\`{a} presentata nel pi\`{u} generale contesto multidimensionale in Par. 1.3, perch\`{e} in questo paragrafo ci occuperemo esclusivamente del caso bidimensionale.
\begin{defn}
\rm{Sia $\textbf{x}_1,\ldots, \textbf{x}_N$ una successione finita nel quadrato $I^{2}$. La discrepanza $D_N$ di questi punti \`{e} definita come
\begin{equation*}
D_N=D_N(\textbf{x}_1,\ldots, \textbf{x}_N)=\sup_J \left|\frac{1}{N}\sum_{n=1}^{N} \chi_{J}(\{\textbf{x}_n\})-\lambda(J) \right| \ ,
\end{equation*}
dove $J=[a_1,b_1[\times[a_2,b_2[\subset I^{2}$ varia in tutti i modi possibili, con $0\leq a_j < b_j< 1$, $j=1,2$.}
\end{defn}

Di seguito riportiamo alcuni grafici relativi alle successioni di tipo (\ref{eq:3.5}) che ci permettano di visualizzare la distribuzione delle coppie nel quadrato e di fare qualche ipotesi sulla loro distribuzione. In particolare, sarebbe interessante scoprire se si tratta ancora di successioni uniformemente distribuite, cos\`{i} come accade con la successione di van der Corput nel passaggio dall'intervallo unitario al quadrato.\\
\begin{figure}[h!]
\centering
\setlength{\unitlength}{1cm}
\includegraphics[scale=0.88, keepaspectratio]{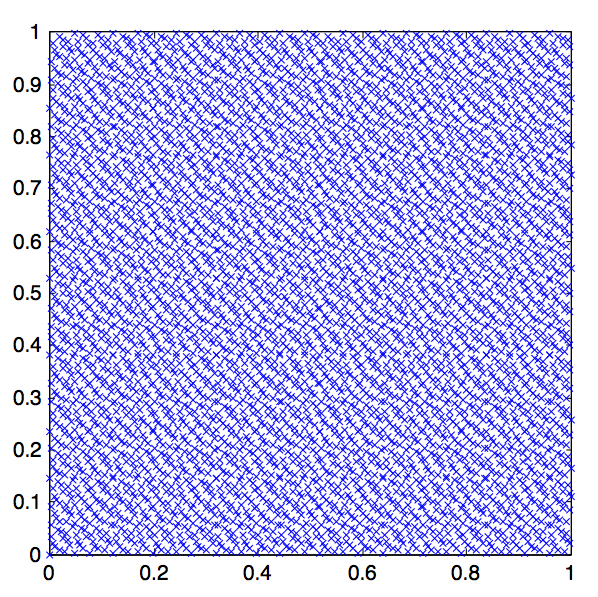}
\caption{Successione $\left(\frac{n}{5000}, \xi_{1,1}^{n}\right)$ per $1\leq n\leq 5000$}\label{fig:unif.-11}
\end{figure}
\begin{figure}[h!]
\centering
\setlength{\unitlength}{1cm}
\includegraphics[scale=0.88, keepaspectratio]{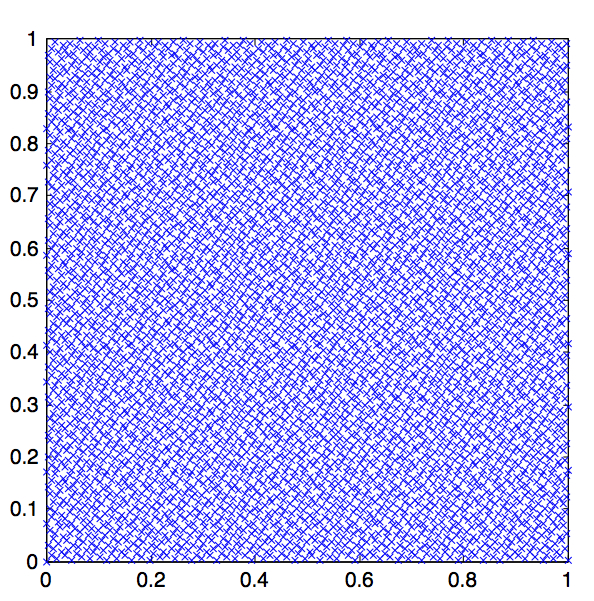}
\caption{Successione $\left(\frac{n}{5000}, \xi_{2,1}^{n}\right)$ per $1\leq n\leq 5000$}\label{fig:unif.-21}
\end{figure}
\begin{figure}[h!]
\centering
\setlength{\unitlength}{1cm}
\includegraphics[scale=0.88, keepaspectratio]{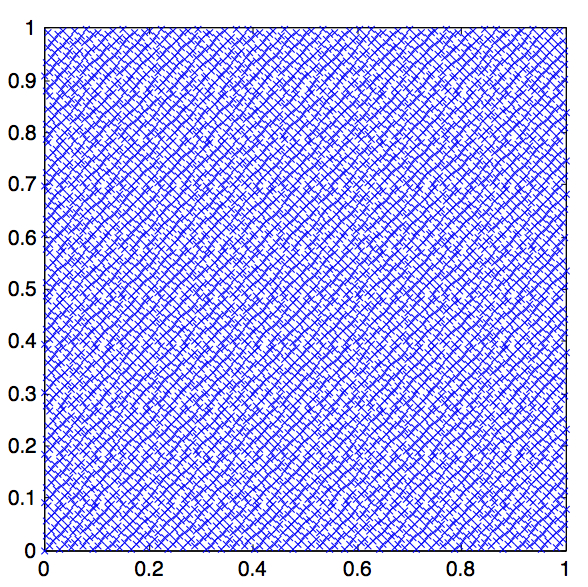}
\caption{Successione $\left(\frac{n}{5000}, \xi_{3,1}^{n}\right)$ per $1\leq n\leq 5000$}\label{fig:unif.-31}
\end{figure}
\bigskip
\\Le immagini seguenti mostrano una distribuzione delle coppie che riveste il quadrato in maniera piuttosto uniforme. Non ci sono infatti zone in cui i punti si addensano n\`{e} zone di rarefazione. La distribuzione dei punti in questi ultimi due casi, inoltre, appare addirittura migliore di quella di Halton in \figurename~\ref{fig:vdc23} e confrontabile con quella di van der Corput in \figurename~\ref{fig:unif.-vdc}. \pagebreak \\
Nel prossimo ed ultimo paragrafo introdurremo delle $LS$-successioni infinite, prenderemo in esame alcuni significativi esempi e cercheremo di determinare quale successione ha il comportamento migliore.
\section{$LS$-successioni ``alla Halton''}
Volgiamo ora la nostra attenzione ad una seconda, ultima e pi\`{u} complessa generalizzazione delle $LS$-successioni di punti nel quadrato unitario. Esse vengono definite in questo contesto in modo naturale, in maniera analoga a quanto visto per le successioni di Halton. Infatti, l'algoritmo presentato nel capitolo precedente in Par. 2.5 ci permette di calcolare i punti della successione $\{\xi_{L,S}^{n}\}_{n\in\mathbb{N}}$ senza doverci preoccupare della posizione di tali punti in qualcuno dei blocchi $\Lambda_{L,S}^{n}$, a differenza di quanto succedeva nella prima definizione di $LS$-successioni di punti introdotte in \cite{Carbone}. \\ Allora appare del tutto naturale considerare coppie di punti appartenenti a due $LS$-successioni (di punti).
\begin{defn}
\rm{Definiamo \emph{$LS$-successione ``alla Halton''} nel quadrato $I^{2}$ la successione $\{(\xi_{L_1S_1}^{n},\xi_{L_2S_2}^{n})\}_{n\in\mathbb{N}}$, dove $\{\xi_{L_1S_1}^{n}\}_{n\in\mathbb{N}}$ e $\{\xi_{L_2S_2}^{n}\}_{n\in\mathbb{N}}$ sono due $LS$-successioni di punti.}
\end{defn}
A differenza della Definizione \ref{Def:3.2.1} con la quale abbiamo introdotto una successione finita di coppie di punti in $I^{2}$, che varia al variare di $N\in \mathbb{N}$, in questo caso abbiamo definito una successione infinita.\\ Non solo. Questa generalizzazione fa intravvedere due possibili generalizzazioni alle dimensioni $s>2$. Potremmo infatti considerare $s$-uple di $LS$-successioni, oppure $s$-uple la cui prima componente sia la successione $\left( \frac{n}{N} \right)$, e le restanti $s-1$ siano costituite da $LS$-successioni.\\
Possiamo implementare l'algoritmo 2.5 separatamente per le successioni in ascissa e per quelle in ordinata e considerare poi le coppie di punti di tipo $LS$ ottenute. In particolare abbiamo preso in esame, sia in ascissa che in ordinata, successioni di punti $\{\xi_{LS}^{n}\}$ a bassa discrepanza, ossia quelle relative alla relazione $L\beta+S\beta^{2}=1$, con $L\geq S$. \\Qui di seguito riportiamo i grafici pi\`{u} interessanti tra quelli ottenuti in seguito a queste implementazioni.\\ 
\begin{figure}[h!]
\centering
\setlength{\unitlength}{1cm}
\includegraphics[scale=0.88, keepaspectratio]{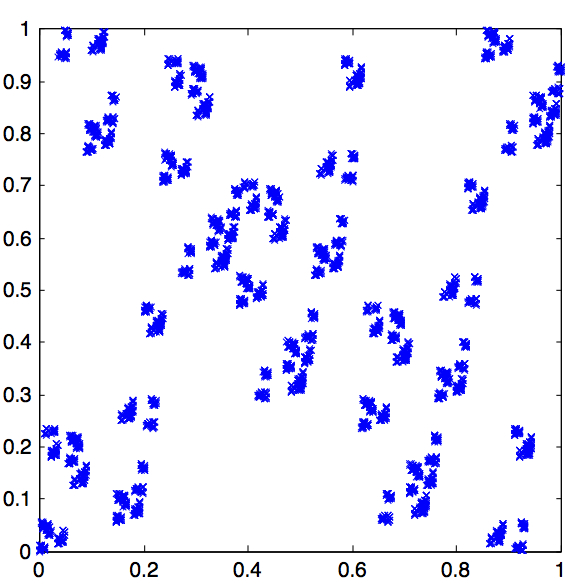}
\caption{Successione $\{(\xi_{1,1}^{n},\xi_{4,1}^{n})\}_{n\in\mathbb{N}}$ per $1\leq n\leq 5000$}\label{fig:11-41}
\end{figure}
\pagebreak
\begin{figure}[h!]
\centering
\setlength{\unitlength}{1cm}
\includegraphics[scale=0.88, keepaspectratio]{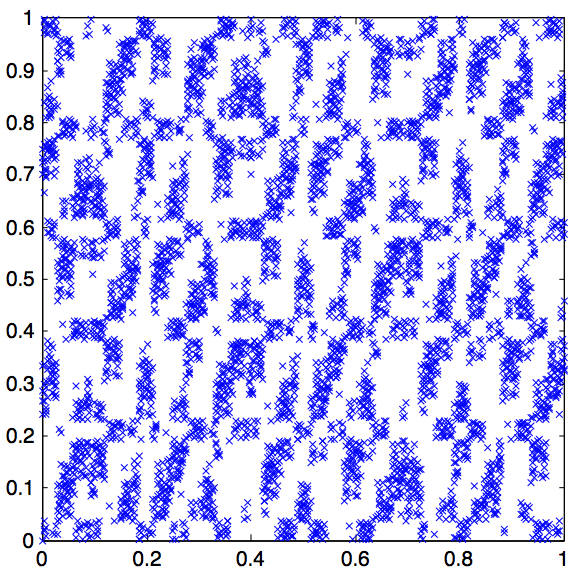}
\caption{Successione $\{(\xi_{3,1}^{n},\xi_{5,1}^{n})\}_{n\in\mathbb{N}}$ per $1\leq n\leq 5000$}\label{fig:31-51}
\end{figure}
\begin{figure}[h!]
\centering
\setlength{\unitlength}{1cm}
\includegraphics[scale=0.88, keepaspectratio]{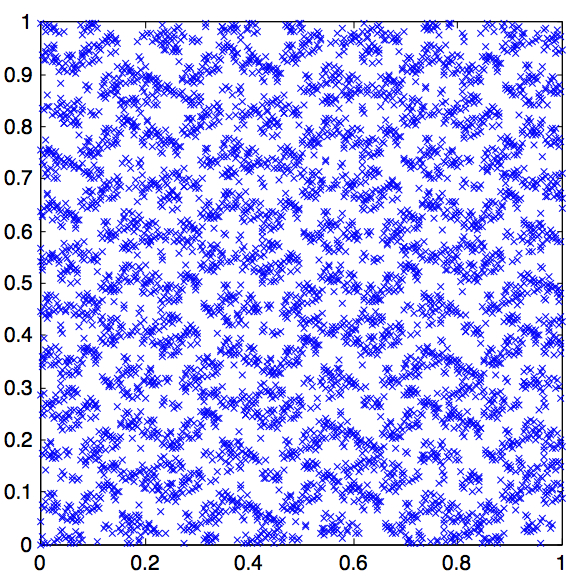}
\caption{Successione $\{(\xi_{5,1}^{n},\xi_{7,1}^{n})\}_{n\in\mathbb{N}}$ per $1\leq n\leq 5000$}\label{fig:51-71}
\end{figure}

\begin{figure}[h!]
\centering
\setlength{\unitlength}{1cm}
\includegraphics[scale=0.88, keepaspectratio]{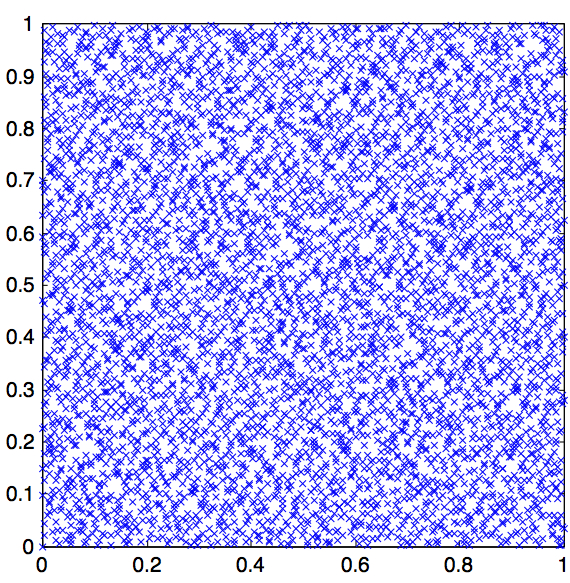}
\caption{Successione $\{(\xi_{1,1}^{n},\xi_{5,1}^{n})\}_{n\in\mathbb{N}}$ per $1\leq n\leq 5000$}\label{fig:11-51}
\end{figure}
\begin{figure}[h!]
\centering
\setlength{\unitlength}{1cm}
\includegraphics[scale=0.88, keepaspectratio]{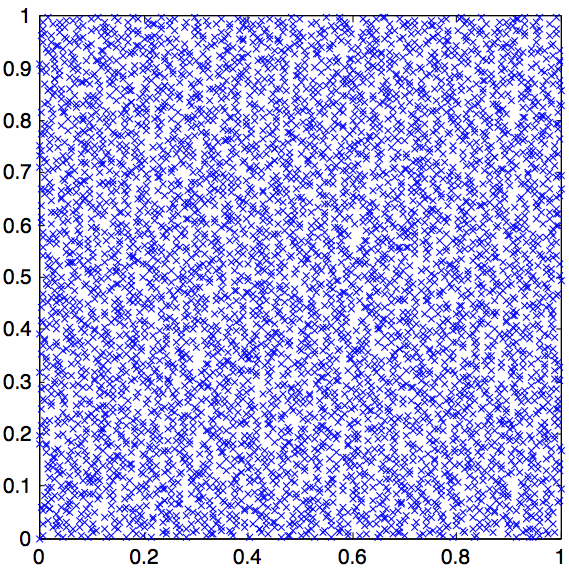}
\caption{Successione $\{(\xi_{3,1}^{n},\xi_{4,1}^{n})\}_{n\in\mathbb{N}}$ per $1\leq n\leq 5000$}\label{fig:31-41}
\end{figure}
Come si pu\`{o} osservare, si tratta di risultati molto diversi tra loro, sebbene tutti discendano da successioni a bassa discrepanza su ciascun lato del quadrato. In particolare, risultano emblematici i primi due grafici, relativi alle successioni di coppie $\{(\xi_{1,1}^{n},\xi_{4,1}^{n})\}_{n\in\mathbb{N}}$ e $\{(\xi_{3,1}^{n},\xi_{5,1}^{n})\}_{n\in\mathbb{N}}$, per cui non solo non si conserva la bassa discrepanza, ma addirittura si \`{e} portati a ritenere che non si tratti di successioni uniformemente distribuite.
\\ Per quanto riguarda il primo grafico, la risposta a questa irregolarit\`{a} potrebbe trovarsi nell'espressione dei $\beta$ che risolvono le equazioni corrispondenti, e cio\`{e}:
\begin{equation*}
\beta+\beta^{2}=1 \quad \Rightarrow \quad \beta=\dfrac{\sqrt{5}-1}{2}
\end{equation*} 
\begin{equation*}
4\beta+\beta^{2}=1 \quad\Rightarrow \quad \beta=\sqrt{5}-2\ .
\end{equation*}
I due valori risultano essere razionalmente dipendenti, ossia una loro combinazione lineare a coefficienti razionali d\`{a} come risultato un intero. Infatti $$2\left(\frac{\sqrt{5}-1}{2}\right)-(\sqrt{5}-2)=1\ .$$ \`{E} dunque lecito supporre che sussista un legame tra le due successioni di punti $\{\xi_{1,1}^{n}\}_{n\in\mathbb{N}}$ e $\{\xi_{4,1}^{n}\}_{n\in\mathbb{N}}$ e tra le corrispondenti successioni di partizioni $\{\rho_{1,1}^{n}\}_{n\in\mathbb{N}}$ e $\{\rho_{4,1}^{n}\}_{n\in\mathbb{N}}$. \\ Effettivamente, in questo caso la risposta \`{e} affermativa. \\ Precisamente, vale il seguente risultato.
\begin{prop}
\label{prop:3.3.2}
Se $t_{n}$ \`{e} il numero totale degli intervalli di $\rho_{1,1}^{n}$ e $t_n^{'}$ il numero totale degli intervalli della partizione $\rho_{4,1}^{n}$, allora si ha che
\begin{equation}
t^{'}_n=t_{3n}\qquad \textrm{per\ ogni}\ n \in \mathbb{N}\ . \label{eq:3.4}
\end{equation}
\end{prop}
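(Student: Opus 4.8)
The identity is a purely combinatorial statement about the two linear recurrences governing the interval counts, so the plan is to show that the subsequence $\bigl(t_{3n}\bigr)_{n\in\mathbb{N}}$ obeys exactly the recurrence and the initial data that define $\bigl(t'_n\bigr)_{n\in\mathbb{N}}$. By the first of the Osservazioni above, specialised to $L=S=1$, the sequence $t_n$ satisfies $t_n=t_{n-1}+t_{n-2}$ with $t_0=1$, $t_1=2$; specialised to $L=4$, $S=1$, the sequence $t'_n$ satisfies $t'_n=4t'_{n-1}+t'_{n-2}$ with $t'_0=1$, $t'_1=5$. First I would record the small values obtained from the first recurrence, namely $t_2=3$, $t_3=5$, $t_4=8$, $t_5=13$, $t_6=21$; in particular $t_0=1=t'_0$, $t_3=5=t'_1$ and $t_6=21=t'_2$, which will serve as base cases.

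The core step is the following three-step contraction of the Fibonacci-type recurrence: for every $m\ge 6$ one has
\begin{equation*}
t_m=4t_{m-3}+t_{m-6}.
\end{equation*}
I would prove it by unfolding $t_k=t_{k-1}+t_{k-2}$ twice, which gives $t_m=3t_{m-3}+2t_{m-4}$, and then observing that $2t_{m-4}=t_{m-4}+(t_{m-5}+t_{m-6})=t_{m-3}+t_{m-6}$; adding the two expressions yields the claim.

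Equivalently, one may argue at the level of characteristic polynomials: the roots $\varphi,\psi$ of $x^2-x-1$ satisfy $\varphi^3+\psi^3=(\varphi+\psi)^3-3\varphi\psi(\varphi+\psi)=4$ and $\varphi^3\psi^3=(\varphi\psi)^3=-1$, so $\varphi^3$ and $\psi^3$ are precisely the roots of $y^2-4y-1$, the characteristic polynomial of the $t'$-recurrence. This is also the conceptual reason behind the statement, since it amounts to $\beta_{1,1}^{3}=\beta_{4,1}$.

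With the lemma in hand, the proof concludes by induction on $n$. The cases $n=0,1$ are the base cases noted above. For $n\ge 2$, assuming $t'_{n-1}=t_{3(n-1)}$ and $t'_{n-2}=t_{3(n-2)}$, the $t'$-recurrence together with the lemma applied with $m=3n\ge 6$ give
\begin{equation*}
t'_n=4t'_{n-1}+t'_{n-2}=4t_{3n-3}+t_{3n-6}=t_{3n},
\end{equation*}
which is the desired equality. I do not expect a genuine obstacle here: the only point requiring a moment's thought is the three-step identity, and it is a one-line manipulation once one decides to regroup $2t_{m-4}$ as $t_{m-3}+t_{m-6}$; the remark about $\varphi^3,\psi^3$ makes the whole statement transparent.
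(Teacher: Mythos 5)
Your proposal is correct and follows essentially the same route as the paper: the key step in both is the three-step contraction $t_{3n}=4t_{3n-3}+t_{3n-6}$ obtained by unfolding the Fibonacci recurrence, followed by induction using $t'_n=4t'_{n-1}+t'_{n-2}$. Your version is in fact slightly cleaner, since you isolate the contraction as a lemma, state both base cases needed for the two-step induction, and add the (correct) characteristic-polynomial explanation $\varphi^3,\psi^3$ being the roots of $y^2-4y-1$, which the paper omits.
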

\begin{proof}
Dimostriamo tale relazione per induzione. La (\ref{eq:3.4}) risulta vera per $n=1$ poich\`{e} vale l'uguaglianza $t^{'}_1=5=t_{3}$. Supponiamo ora che la (\ref{eq:3.4}) sia vera per $n-1$, ossia $t^{'}_{n-1}=t_{3(n-1)}$ e dimostriamo che \`{e} vera per $n$. \\Per quanto visto nel capitolo precedente, sappiamo che valgono le equazioni alle differenze (\ref{eq:2.4}), e cio\`{e}
\begin{equation}
t_n=t_{n-1}+t_{n-2}\label{eq:3.6}
\end{equation}
e
\begin{equation}
t^{'}_n=4t^{'}_{n-1}+t^{'}_{n-2} \ . \label{eq:3.7}\smallskip
\end{equation}
Allora, utilizzando la prima di queste relazioni, si ha
\begin{eqnarray*}
t_{3n}&=&t_{3n-1}+t_{3n-2}=t_{3n-2}+t_{3n-3}+t_{3n-2}= \\
&=& t_{3n-3}+t_{3n-4}+t_{3n-3}+t_{3n-3}+t_{3n-4}= \\
&=& 2(t_{3n-3}+t_{3n-4})+t_{3n-3}=3t_{3n-3}+2t_{3n-5}+2t_{3n-6}= \\
&=& 3t_{3n-3}+t_{3n-5}+t_{3n-3}-t_{3n-4}+2t_{3n-6}= \\
&=& 4t_{3n-3}+t_{3n-6}\ ,
\end{eqnarray*}
ma per ipotesi induttiva $4t_{3(n-1)}+t_{3(n-2)}=4t^{'}_{n-1}+t^{'}_{n-2}$, ossia la tesi. 
\end{proof}
Attraverso la proposizione precedente abbiamo dimostrato che la successione $\{t_n^{'}\}_{n\in\mathbb{N}}$ degli intervalli di $\{\rho_{4,1}^{n}\}$ \`{e} una sottosuccessione della successione di Fibonacci $\{t_n\}_{n\in\mathbb{N}}$ richiamata in (\ref{eq:3.6}). \\ A questo punto, vogliamo vedere se esiste una simile relazione anche tra $\{\rho_{3,1}^{n}\}$ e $\{\rho_{5,1}^{n}\}$ e, pi\`{u} in generale, se \`{e} vero che, ad esempio, come nella \figurename~\ref{fig:31-51} e nella \figurename~\ref{fig:51-71}, la pessima distribuzione delle prime $5000$ coppie della successione $\{(\xi_{L,S}^{n},\xi_{L',S'}^{n})\}_{n\in\mathbb{N}}$ nel quadrato dipenda dal fatto che la successione $\{t_n\}_{n\in\mathbb{N}}$ associata a $\{\rho_{L,S}^{n}\}_{n\in\mathbb{N}}$ sia una sottosuccessione di $\{t_n^{'}\}_{n\in\mathbb{N}}$ associata a $\{\rho_{L^{'},S^{'}}^{n}\}_{n\in\mathbb{N}}$. \\ Per determinare il valore che $k$ deve assumere affinch\`{e} sia vera l'equazione $t^{'}_n=t_{kn}$ per ogni $n\in\mathbb{N}$, osserviamo innanzitutto che essa deve essere vera anche per $n=1$. Dunque, per risolvere il problema \`{e} sufficiente considerare l'equazione ottenuta per $n=1$, e cio\`{e}
\begin{equation*}
t^{'}_1=t_k \ .
\end{equation*}
Infatti, poich\`{e} $t^{'}_1=L'+S'$, il problema si riduce a trovare il $k$ corrispondente a $t_k=L'+S'$. \\ Nel caso delle successioni di partizioni $\{\rho_{1,1}^{n}\}_{n\in\mathbb{N}}$ e $\{\rho_{4,1}^{n}\}_{n\in\mathbb{N}}$ bisogna osservare che $t^{'}_1=4+1=5=t_k$ che corrisponde a $k=3$. \\ Per quanto riguarda la coppia di successioni di partizioni $\{\rho_{3,1}^{n}\}$ e $\{\rho_{5,1}^{n}\}$ non esiste una relazione simile a quella trovata per $\{\rho_{1,1}^{n}\}$ e $\{\rho_{4,1}^{n}\}$ perch\`{e} $t^{'}_1=L'+S'=5+1=6$ non corrisponde ad alcun valore di $t_k$. Dobbiamo dunque dedurre che non esiste alcun $k$ che verifichi l'equazione $t^{'}_1=t_k$.
\paragraph*{}
Vediamo infine un confronto tra la successione di van der Corput di ordine $N$, $\left( \frac{n}{N}, \phi_2(n)\right)$, la successione di Halton $\{(\phi_2(n), \phi_3(n))\}_{n\in\mathbb{N}}$, la $LS$-successione ``alla van der Corput'' di ordine $N$, $(\frac{n}{N}, \xi_{3,1}^{n})$ e la $LS$-successione ``alla Halton''$\{(\xi_{3,1}^{n},\xi_{4,1}^{n})\}_{n\in\mathbb{N}}$ presentate in questo capitolo, dal punto di vista dell'uniforme distribuzione. Per effettuare tale confronto prendiamo in esame la stima di un integrale doppio di una funzione continua di cui conosciamo il valore esatto e vediamo quale successione fornisce la stima migliore.\smallskip\\ Sia $f(x,y)=2x+3y^{2}$ con $(x,y)\in I^{2}$. Allora naturalmente
\begin{equation*}
\int_{I^{2}}f(x,y)dx dy=\int_{I^{2}}2x+3y^{2}dx dy = 2\ .
\end{equation*}
Nella tabella seguente riportiamo i valori di $\frac{1}{N}\sum_{n=1}^{N}f(x,y)$, \linebreak con $N=100, 500, 1000, 2000$.\\
\begin{table}[h!]
\centering
\begin{tabular}{|c|c|c|c|c| c|}
\hline
$N$  & $\left( \frac{n}{N}, \phi_2(n)\right)$ &$\{(\phi_2(n),\phi_3(n))\}$ & $(\frac{n}{N}, \xi_{3,1}^{n})$ & $\{(\xi_{3,1}^{n},\xi_{4,1}^{n})\}$ & Pseudo \\
 & & & & & Random\\
 \hline
$100$ & $1.9560$ & $1.9346$ & $1.9440$ & $1.958$ & $2.0592$\\
\hline
$500$ & $1.9906$ & $1.9837$ & $1.9898$ & $1.9911$ & $1.9495$\\
\hline
 $1000$ & $1.9953$ & $1.9925$ & $1.9935$ & $1.9949$ & $2.0506$\\
\hline
 $2000$ & $1.9977$ & $1.9959$ & $1.9979$ & $1.9976$ & $1.9925$\\
\hline
\end{tabular}
\caption{Stime dell'integrale di $f$}\label{tabella}
\end{table}\\

Da questi calcoli risulta evidente che l'insieme di punti che meglio approssima il valore dell'integrale di $f$ \`{e} quello corrispondente alla $LS$-successione ``alla van der Corput'' di ordine $N$, $(\frac{n}{N}, \xi_{3,1}^{n})$. In particolare, questo risultato mostra un comportamento migliore di quest'ultima successione anche rispetto alla successione di Halton.\\ Per contro, si pu\`{o} osservare come sia poco soddisfacente l'approssimazione ottenuta con i numeri pseudo-casuali che presenta oscillazioni notevoli nell'errore di approssimazione.\medskip\\ Visti i risultati sperimentali, riteniamo di poter formulare la seguente
\begin{congettura}
\label{congettura}
Le $LS$-successioni ``alla van der Corput'' nel quadrato sono u.d..
\end{congettura}
Questo rimane comunque un problema aperto, come resta aperto il problema della stima della discrepanza di queste successioni. \smallskip \\ Si tratta prevedibilmente di un problema difficile, poich\`{e} le analoghe dimostrazioni per le successioni associate ai numeri primi (di van der Corput, Hammersley e Halton) hanno rappresentato e rappresentano delle pietre miliari nel campo della distribuzione uniforme.\smallskip \\ La congettura \`{e} d'altra parte supportata non solo dall'effetto visivo che ci danno le figure di questo paragrafo, ma anche dai risultati numerici che abbiamo presentato nella Tabella \ref{tabella} e che mostrano come i metodi sviluppati in questo capitolo siano senza dubbio competitivi con quelli noti nella letteratura.

\chapter{Osservazioni conclusive}
La parte finale di questa tesi \`{e} stata dedicata alla costruzione di nuove successioni di punti nel quadrato $[0,1[\times [0,1[$ utilizzando uno strumento nuovo, introdotto recentemente in \cite{Carbone}, e cio\`{e} le $LS$-successioni. Abbiamo, infatti, introdotto due classi numerabili di successioni di punti, che abbiamo chiamato $LS$-successioni ``alla van der Corput'' e $LS$-successioni ``alla Halton''.  Successioni che appartengono alla prima classe sono di ordine $N$ per ogni intero positivo $N$ fissato (e, cio\`{e}, sono costituite da $N$ elementi), mentre quelle appartenenti alla seconda classe sono infinite.\\

Relativamente a questo problema abbiamo fatto molta sperimentazione sia grafica che numerica trovando dei risultati variegati e comunque interessanti. Alcuni esperimenti hanno dato degli esiti francamente sorprendenti e che avranno bisogno di ulteriori approfondimenti teorici per spiegare certi fenomeni. \\

Sono molto confortanti i risultati ottenuti considerando su una delle due coordinate la successione equidistribuita e sull'altra una $LS$-successione ``alla van der Corput'' (Figure \ref{fig:unif.-11}, \ref{fig:unif.-21} e \ref{fig:unif.-31}). Sia i risultati grafici che i risultati numerici ci hanno consentito di formulare la Congettura \ref{congettura} che si pu\`{o} ritenere molto plausibile. \\

Gli esperimenti che abbiamo fatto invece con coppie di $LS$-successioni ``alla Halton'' hanno dato dei risultati contrastanti. Da un lato abbiamo ottenuto delle successioni nel quadrato che appaiono uniformemente distribuite sia con un numero basso di punti sia con un numero elevato di punti (Figure \ref{fig:11-51} e \ref{fig:31-41}) e sembrano competitive con le successioni di Halton per il calcolo dell'integrale di funzioni di due variabili. Ma abbiamo anche ottenuto delle successioni che palesemente non sono uniformemente distribuite e che danno luogo a concentrazioni di punti in certe zone, accompagnate da altre zone prive di punti (Figure \ref{fig:11-41}, \ref{fig:31-51} e \ref{fig:51-71}). Un fenomeno analogo si verifica con le successioni di Halton associate a numeri che non siano coprimi. Ma mentre per le successioni di Halton la spiegazione era stata data da Halton stesso, per le $LS$-successioni manca ancora una spiegazione teorica per un comportamento tanto variegato. Una spiegazione (Proposizione \ref{prop:3.3.2}) \`{e} stata trovata per l'accoppiamento $\{\xi_{1,1}^{n}\}_{n\in\mathbb{N}}$ e $\{\xi_{4,1}^{n}\}_{n\in\mathbb{N}}$, quello forse pi\`{u} sorprendente (Figura \ref{fig:11-41}). Per esso si verifica lo stesso fenomeno che si pu\`{o} osservare con l'accoppiamento di successioni di Halton quando si prendono basi non coprime, e cio\`{e} una ``risonanza'  in corrispondenza al minimo comune multiplo delle due basi. Si sono per\`{o} scoperti successivamente altri accoppiamenti che non sembrano dar luogo a successioni u.d. nel quadrato, per i quali tale risonanza non c'\`{e}. \\

Questo fenomeno merita uno studio successivo, probabilmente non semplice, se si pensa alla lentezza del progresso della conoscenza in questo settore: ai lavori di van der Corput sono seguiti oltre vent'anni dopo quelli di Hammersley (che si era limitato a fare una congettura e degli esperimenti numerici) e di Halton, che  ha fornito una base teorica per spiegare il buon comportamento delle successioni introdotte da Hammersley e ad introdurre a sua volta un'altra generalizzazione del procedimento di van der Corput. \\

Un ulteriore filone di ricerca, qualora la congettura venga dimostrata, sarebbe indirizzato allo studio della discrepanza delle $LS$-successioni u.d..

\addcontentsline{toc}{chapter}{Bibliografia}
\nocite{*}

\end{document}